\newtheorem{same}{This should never appear}[section]
\newtheorem{defin}[same]{Definition}
\newtheorem{remark}[same]{Remark}
\newtheorem{theorem}[same]{Theorem}
\newtheorem{example}[same]{Example}
\newtheorem{lemma}[same]{Lemma}
\newtheorem{fact}[same]{Fact}
\newtheorem{question}[same]{Question}
\newtheorem{cor}[same]{Corollary}
\newtheorem{nota}[same]{Notation}
\newenvironment{customthm}[1]
  {\innercustomthm}
  {\endinnercustomthm}
\newtheorem{defin*}{Definition}
\newtheorem*{theorem*}{Theorem}
\newbox\noforkbox \newdimen\forklinewidth
\noforkbox\hbox{\lower 2pt\box1\lower 2pt\box0\relax}
\def\unionstick{\mathop{\copy\noforkbox}\limits}
\def\nonfork_#1{\unionstick_{\textstyle #1}}
\newbox\doesforkbox
\doesforkbox\hbox{\lower 2pt\box1 \lower 2pt\box2\lower2pt\box0\relax}
\def\nunionstick{\mathop{\copy\doesforkbox}\limits}
\def\fork_#1{\nunionstick_{\textstyle #1}}
\newcommand{\skipitems}[1]{%
  \addtocounter{\@enumctr}{#1}%
}
\newcommand{\id}{\textrm{id}}
\newcommand{\ce}{\mathcal{C}}
\newcommand{\dnf}{\unionstick}
\newcommand{\s}{\mathfrak{s}}
\newcommand{\K}{\mathbf{K}}
\newcommand{\LS}{\operatorname{LS}}
\newcommand{\Ll}{\mathbb{L}}
\newcommand{\upp}{\upharpoonright}
\newcommand{\leap}[1]{\le_{#1}}
\newcommand{\ltap}[1]{<_{#1}}
\newcommand{\geap}[1]{\ge_{#1}}
\newcommand{\lta}{\ltap{\K}}
\newcommand{\lea}{\leap{\K}}
\newcommand{\gea}{\geap{\K}}
\newcommand{\gtp}{\mathbf{tp}}
\newcommand{\gS}{\mathbf{S}}
\newcommand{\Ii}{\mathbb{I}}
\title{Non-forking w-good frames}
\date{\today.} 
\author{Marcos Mazari Armida}
\email{mmazaria@andrew.cmu.edu}
\address{Department of Mathematical Sciences \\ Carnegie Mellon University \\ Pittsburgh, Pennsylvania, USA}
\begin{document}

\maketitle

{\let\thefootnote\relax\footnote{{AMS 2010 Subject Classification: Primary 03C48. Secondary: 03C45, 03C55.
Key words and phrases. Abstract Elementary Classes; Good Frames; Tameness}}}  

\begin{abstract}

We introduce the notion of a w-good $\lambda$-frame which is a weakening of Shelah's notion of a good $\lambda$-frame. Existence of a w-good $\lambda$-frame implies existence of a model of size $\lambda^{++}$.  Tameness and amalgamation imply extension of a w-good $\lambda$-frame to larger models. As an application we show:
\begin{theorem}
Suppose $2^{\lambda}< 2^{\lambda^{+}} < 2^{\lambda^{++}}$ and $2^{\lambda^{+}} > \lambda^{++}$. If $\Ii(\K, \lambda) = \Ii(\K, \lambda^{+}) = 1 \leq \Ii(\K, \lambda^{++}) < 2^{\lambda^{++}}$ and $\K$ is $(\lambda, \lambda^+)$-tame, then $\K_{\lambda^{+++}} \neq \emptyset$.
\end{theorem}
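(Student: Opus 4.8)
The plan is to run the now-standard program of building a frame from the few-models hypotheses and then transferring it upward, but carried out with the weaker notion of a w-good frame so that the assumptions actually suffice. Concretely, I would pass through three stages: (i) extract a w-good $\lambda$-frame $\s$ on $\K_\lambda$ from the categoricity and few-models assumptions; (ii) use $(\lambda,\lambda^+)$-tameness together with amalgamation to push $\s$ up to a w-good $\lambda^+$-frame $\s^+$; and (iii) invoke the model-existence result for w-good frames one cardinal higher to produce a model in $(\lambda^+)^{++} = \lambda^{+++}$.

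First I would set up the AEC infrastructure in $\lambda$. From $\Ii(\K,\lambda) = \Ii(\K,\lambda^+) = 1$ together with the weak-diamond hypothesis $2^\lambda < 2^{\lambda^+}$, one obtains amalgamation, joint embedding, and no maximal models in $\K_\lambda$ by the classical Shelah weak-diamond arguments (categoricity in two successive cardinals, below the relevant power of $2$, forces amalgamation), while stability in $\lambda$ follows from categoricity. With this in hand I would define the non-forking relation in the usual way (via minimal/basic types and uniqueness of nonsplitting extensions) and verify the axioms of a w-good $\lambda$-frame. The crucial point is that the properties one cannot establish outright --- precisely those whose failure is exploited in the good-frame theory to manufacture many models --- are exactly the ones dropped in passing from ``good'' to ``w-good''; the hypothesis $\Ii(\K,\lambda^{++}) < 2^{\lambda^{++}}$, combined with $2^{\lambda^+} < 2^{\lambda^{++}}$ and the auxiliary $2^{\lambda^+} > \lambda^{++}$, is what rules out the $2^{\lambda^{++}}$-many non-isomorphic models that a genuine defect would produce, so the surviving axioms hold.

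Next I would transfer $\s$ upward. Tameness plus amalgamation is exactly the hypothesis isolated in the extension result quoted in the abstract, so $(\lambda,\lambda^+)$-tameness yields a w-good $\lambda^+$-frame $\s^+$ on $\K_{\lambda^+}$; here categoricity in $\lambda^+$ guarantees that the base model is saturated, amalgamation in $\lambda$ comes packaged with the w-good $\lambda$-frame, and tameness transfers it up to supply the ambient models needed to define non-forking at the higher cardinal. Finally, applying the model-existence result of the abstract with $\lambda^+$ in place of $\lambda$ to the frame $\s^+$ gives a model of size $(\lambda^+)^{++} = \lambda^{+++}$, that is, $\K_{\lambda^{+++}} \neq \emptyset$, as desired.

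The main obstacle I anticipate is stage (i): verifying that the hypotheses really do yield all of the w-good-frame axioms, in particular existence and uniqueness of non-forking extensions together with whatever remnant of symmetry and local character the w-good notion retains. This is where every cardinal-arithmetic assumption is consumed, and where the delicate weak-diamond counting --- showing that any defect is paid for by $2^{\lambda^{++}}$ models in $\lambda^{++}$, contradicting $\Ii(\K,\lambda^{++}) < 2^{\lambda^{++}}$ --- must be carried out carefully. By comparison, stages (ii) and (iii) are essentially black-box applications of the two results highlighted in the abstract.
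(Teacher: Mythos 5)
Your three-stage skeleton (build a w-good $\lambda$-frame, transfer it up by tameness and amalgamation, then invoke model existence) matches the paper's outline, but stage (i) as you describe it has a genuine gap, and it is exactly the gap the paper's argument is designed to step around. You propose to establish the w-good $\lambda$-frame axioms \emph{outright} from the few-models hypotheses, with $\Ii(\K,\lambda^{++}) < 2^{\lambda^{++}}$ serving only to ``rule out defects'' by a weak-diamond count. Carrying out that plan without further input is essentially Shelah's hard theorem (the paper's Fact 1.5(2)), whose only known proofs go through the 200-page machinery of \cite[\S VII]{shelahaecbook} --- precisely what this paper avoids. The paper's actual proof is by \emph{contradiction}: one assumes $\K_{\lambda^{+++}} = \emptyset$ from the start, and this assumption is consumed in building the frame itself, not just in interpreting its consequences. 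Concretely, $\K_{\lambda^{+++}} = \emptyset$ yields a maximal model $\ce \in \K_{\lambda^{++}}$, which by amalgamation is universal above $\lambda$ (Lemma 4.12). This universal model of size exactly $\lambda^{++}$ is the engine of both hard verifications: for weak density of minimal types, one builds a binary tree of $2^{\lambda^+}$ pairwise type-distinct pre-types above a putative non-minimalizable pre-type, embeds all branches into $\ce$, and uses $2^{\lambda^+} > \lambda^{++} = \|\ce\|$ to force a collision; for the extension property of minimal types, $\ce$ again anchors the counting arguments (together with the weak-diamond Lemma 4.26 contradicting $\lambda^+$-categoricity). Neither argument is available without the contradiction hypothesis, so the frame you want in stage (i) is not known to exist unconditionally under your hypotheses.

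Two smaller points. First, the properties you flag as the delicate remnant (``symmetry and local character'') are simply dropped from the w-good notion; the genuinely hard axioms are \emph{weak density} and \emph{existence of non-forking extensions} for the minimal-types frame $\s_{min}$, and uniqueness is immediate from minimality. Second, once the frame is in hand your stages (ii) and (iii) are fine and agree with the paper up to a harmless reshuffling: the paper extends $\s_{min}$ to a w-good $[\lambda,\lambda^{++})$-frame and applies Theorem 3.18 to get $\K_{\lambda^{+++}} \neq \emptyset$, contradicting the assumption $\K_{\lambda^{+++}} = \emptyset$ and closing the argument --- whereas in your version the conclusion of stage (iii) would have to be read as the final contradiction rather than as a direct construction of the model.
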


The proof presented clarifies some of the details of the main theorem of \cite{sh576} and avoids using the heavy set-theoretic machinery of \cite[\S VII]{shelahaecbook} by replacing it with tameness.

\end{abstract}

\tableofcontents

\section{Introduction}

The central notion of Shelah's two volume book \cite{shelahaecbook} is that of a \emph{good $\lambda$-frame}, which is a forking-like notion for types of singletons in abstract elementary classes. It is crucial in transferring existence of models and categoricity to other cardinalities.

 Since it is hard to build good $\lambda$-frames, several weaker notions have been studied. Jarden and Shelah introduced  \emph{semi-good $\lambda$-frames} in \cite{jrsh875} and \emph{almost-good $\lambda$-frames} in \cite{jrsh940}. Vasey worked with \emph{$good^{-(S)} \lambda$-frames}  in \cite{vaseya} and with \emph{$good^- \lambda$-frames} in \cite{vaseyc}.\footnote{See Definition \ref{s-frame} for the definitions of all these notions and Diagram \ref{diagram} for their comparison in strength.} These notions have been particularly useful in deriving existence of models in larger cardinalities.

In this paper we introduce the notion of a \emph{w-good $\lambda$-frame} (see Definition \ref{extension-frame}), which is a weaker notion than all the ones mentioned above. A w-good $\lambda$-frame satisfies all the properties of a good $\lambda$-frame except that the density requirement is weakened and stability, symmetry and local character are not assumed.

In \cite[\S III.0]{shelahaecbook} Shelah introduced pre-$\lambda$-frames which are a weaker notion than that of a w-good $\lambda$-frame, but they are so weak that no interesting statement follows from their existence. The next diagram exhibits the relationship in strength between all the frames presented above. In the diagram, the source of an arrow is stronger than its target.\footnote{In Section 3.1 we present a more detailed discussion regarding the implications in the other direction.} \\

\begin{equation}\label{diagram}
\begin{tikzpicture}[auto,node distance=1.5cm]
  %
  \node[entity] (node1) {good $\lambda$-frame}
  ;
\node[entity] (rel0) [above right = of node1] {semi-good $\lambda$-frame};

 \node[entity] (rel11) [right = of node1] {$good^{-(S)} \lambda$-frame};

\node[entity] (rel00) [right = of rel0] {$good^{-} \lambda$-frame};

  \node[entity] (rel1) [below right = of node1] {almost-good $\lambda$-frame};

  \node[entity] (node2) [above right = of rel1]	{w-good $\lambda$-frame};

  \node[entity] (node3) [right = of node2]	{pre-$\lambda$-frame};

  \path [->] (node1) edge node {} (rel1);
\path [->] (rel1) edge	 node {}	(node2);
\path [->](node1) edge node {} (rel0);
\path [->]  (rel0) edge	 node {}	(rel00);
\path[->] (rel11) edge node{} (rel00);
\path [->] (rel00) edge node {} (node2);
\path[->] (node2) edge node{} (node3);
\path[->](node1) edge node{} (rel11);

\end{tikzpicture}
\end{equation}\\

A w-good $\lambda$-frame is useful as it allows us to construct larger models. More precisely we show the following theorem which generalizes 
\cite[\S II.4.13.3]{shelahaecbook}, \cite[3.1.9]{jrsh875}  and \cite[8.9]{vaseya}:


\begin{customthm}{3.18}
If $\s$ is a w-good $[\lambda, \mu)$-frame, then $\K_{\kappa} \neq \emptyset$ for all $\kappa \in [\lambda, \mu^+]$.  
\end{customthm}

 Under the hypothesis of tameness and the amalgamation property w-good $\lambda$-frames can be extended to larger models. The technique used to show this is similar to that of \cite[1.1]{extendingframes}, we only need to show that weak density and no maximal models transfer up.

\begin{customthm}{3.24}\label{ee}
Assume $\K$ is an AEC with the $[\lambda, \mu^+)$-amalgamation property. If $\s$ is a w-good $\lambda$-frame and $\K$ is $(\lambda, \mu)$-tame then $\s$  can be extended to a w-good $[\lambda, \mu^+)$-frame.
\end{customthm}

After presenting the above theorem, we apply the results obtained for w-good $\lambda$-frames to prove the following:

\begin{customthm}{4.2}\label{b-model}
Suppose $2^{\lambda}< 2^{\lambda^{+}} < 2^{\lambda^{++}}$ and $2^{\lambda^{+}} > \lambda^{++}$. If  $\Ii(\K,\lambda)= \Ii(\K,\lambda^+)=1\leq \Ii(\K,\lambda^{++}) < 2 ^{\lambda^{++}}$ and $\K$ is $(\lambda, \lambda^+)$-tame then $\K_{\lambda^{+++}} \neq \emptyset$.
\end{customthm}

The proof presented in the paper is an exposition of the  ideas displayed in \cite{sh576} with the following key feature. Using the assumption that $2^{\lambda^{+}} > \lambda^{++}$, that $\K$ is $(\lambda, \lambda^+)$-tame and the results obtained for w-good frames, we are able to avoid using the set-theoretic machinery developed in \cite[\S 3]{sh576} and in \cite[\S VII]{shelahaecbook} and used in Shelah's original proof. The set-theoretic machinery was initially developed by Shelah in a 20 pages section of \cite[\S 3]{sh576}, ten years later Shelah redid this section in a 200 pages chapter of his book  \cite[\S VII]{shelahaecbook}. In Shelah words ``Compared to \cite[\S 3]{sh576}, the present version [Chapter VII] is hopefully more transparent". This newer version was not refereed and we were still unable to verify Shelah's assertions.

Another interesting consequence of Theorem \ref{b-model} is that it gives a 200  pages shorter proof for the main theorem of \cite{sh576} (see Fact \ref{sh1}(1) below), with the extra hypothesis that $2^{\lambda^+} > \lambda^{++}$, in the case $\K$ is a universal class (see Definition \ref{universal}).

Lastly, we would like to point out that Theorem \ref{b-model} is not the best possible result in this direction, since 
the main theorem of \cite[\S VI.0.(2)]{shelahaecbook} (which is a revised version of \cite{sh576}) is the following.

\begin{fact}\label{sh1}
Suppose $2^{\lambda}< 2^{\lambda^{+}} < 2^{\lambda^{++}}$. If  $\Ii(\K,\lambda)=  \Ii(\K,\lambda^+)=1 \leq \Ii(\K,\lambda^{++}) < \mu_{unif}(\lambda^{++}, 2^{\lambda^+})$ \footnote{See \cite[\S VII.0.4]{shelahaecbook} for a definition of $\mu_{unif}$ and some of its properties.} then 
\begin{enumerate}
\item $\K_{\lambda^{+++}}\neq \emptyset$.
\item There is an almost-good $\lambda$-frame on $\K$.\footnote{Combining further results of Shelah, \cite[7.1]{vaseye} actually gets a good $\lambda$-frame and a good $\lambda^+$-frame.}
\end{enumerate}
\end{fact}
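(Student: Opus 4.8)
The plan is to reconstruct Shelah's argument, which splits into two logically distinct halves matching the two conclusions, the second feeding on the first. The combinatorial engine throughout is the weak diamond, available because $2^{\lambda} < 2^{\lambda^{+}}$ at the bottom and $2^{\lambda^{+}} < 2^{\lambda^{++}}$ one level up. First I would extract the structural consequences of the few-models hypotheses in the two smallest cardinals. The guiding principle is that whenever a structural property fails in $\K_{\lambda}$, one codes the failure into a binary tree of $\lambda^{+}$-sized models and applies the weak diamond to produce $2^{\lambda^{+}}$ pairwise non-isomorphic models of size $\lambda^{+}$, contradicting $\Ii(\K,\lambda^{+}) = 1 < 2^{\lambda^{+}}$. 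Applying this to the amalgamation configuration yields amalgamation in $\lambda$; applying it to stability yields stability in $\lambda$; and categoricity in $\lambda$ together with these gives joint embedding and no maximal models in $\lambda$. This is essentially the content of \cite[\S II.3]{shelahaecbook}.

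Second, with amalgamation, stability, and categoricity in both $\lambda$ and $\lambda^{+}$ in hand, I would construct the forking-like notion. Following Shelah, non-forking is defined on a dense family of \emph{basic} (minimal, non-algebraic) types, and one verifies the frame axioms in their \emph{almost}-good (that is, weakened) forms: existence and extension of basic types, uniqueness, and the continuity and transitivity axioms. Uniqueness is again obtained from the weak diamond, since its failure would once more code into $2^{\lambda^{+}}$ non-isomorphic models in $\lambda^{+}$. The point of working with the weakened notion is that only the reduced density and local-character requirements need be checked, which is exactly what the hypotheses permit. This delivers conclusion (2), an almost-good $\lambda$-frame on $\K$.

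Third, for conclusion (1) I would feed the almost-good $\lambda$-frame into the upward existence machinery. An almost-good $\lambda$-frame already produces a model in $\lambda^{++}$ (the analogue for almost-good frames of Theorem 3.18 of the present paper for w-good frames). To climb the final step to $\lambda^{+++}$ one must \emph{extend} the frame from $\lambda$ to $\lambda^{+}$: one shows the derived frame on $\K_{\lambda^{+}}$ is again almost-good, then re-runs the existence argument one cardinal higher. This is where $\Ii(\K,\lambda^{++}) < \mu_{unif}(\lambda^{++}, 2^{\lambda^{+}})$ enters. The extension requires that the frame on $\K_{\lambda^{+}}$ be ``successful'' (uniqueness of $\lambda^{+}$-limit models and the relevant amalgamation in $\lambda^{+}$), and the governing dichotomy is that either this structure holds or else a uniformization argument builds $\mu_{unif}(\lambda^{++}, 2^{\lambda^{+}})$ many non-isomorphic models in $\lambda^{++}$ (using the weak diamond powered by $2^{\lambda^{+}} < 2^{\lambda^{++}}$). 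The bound on $\Ii(\K,\lambda^{++})$ rules out the second alternative, forcing the structure and hence the extended frame, from which $\K_{\lambda^{+++}} \neq \emptyset$.

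The main obstacle is precisely this last dichotomy: the uniformization and non-structure machinery of \cite[\S VII]{shelahaecbook} (originally \cite[\S 3]{sh576}) that converts the failure of successfulness into $\mu_{unif}(\lambda^{++}, 2^{\lambda^{+}})$ many models of size $\lambda^{++}$. Calibrating $\mu_{unif}$ correctly and carrying out the coding of non-structure into $\lambda^{++}$-sized models is the technically heaviest and least transparent ingredient; indeed it is exactly the apparatus the present paper is designed to circumvent under the extra assumptions $2^{\lambda^{+}} > \lambda^{++}$ and $(\lambda,\lambda^{+})$-tameness. I would therefore expect this step to be the genuine bottleneck, with the weak-diamond frame-construction steps of the first two halves being comparatively routine by contrast.
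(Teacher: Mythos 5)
You should first note what the paper actually does with this statement: it is quoted as a \emph{Fact}, cited to \cite[\S VI.0.(2)]{shelahaecbook} (the revision of \cite{sh576}), and the paper supplies no proof whatsoever --- indeed the author explicitly says the supporting non-structure machinery of \cite[\S VII]{shelahaecbook} ``was not refereed and we were still unable to verify Shelah's assertions,'' and the whole point of the paper is to \emph{avoid} that machinery under the extra hypotheses $2^{\lambda^+}>\lambda^{++}$ and $(\lambda,\lambda^+)$-tameness. So there is no in-paper argument to compare yours against, and your proposal must stand or fall as a reconstruction of Shelah's proof. As such it is a roadmap, not a proof: its decisive step --- the dichotomy between enough structure at $\lambda^+$ and the production of $\mu_{unif}(\lambda^{++},2^{\lambda^+})$ pairwise non-isomorphic models of size $\lambda^{++}$ --- is exactly the roughly 200-page uniformization apparatus of \cite[\S 3]{sh576} and \cite[\S VII]{shelahaecbook}. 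You correctly identify it as the bottleneck, but you do nothing with it beyond name-dropping $\mu_{unif}$: no indication of how the weak diamond at $\lambda^{++}$ is organized into a coding of failures into trees of $\lambda^{++}$-sized models, nor of how the count is calibrated to $\mu_{unif}(\lambda^{++},2^{\lambda^+})$ rather than to $2^{\lambda^{++}}$. Deferring the entire technical core to a citation of an unverified source means the proposal establishes nothing beyond the citation itself, which is precisely the status the statement already has in the paper.

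There are also local inaccuracies in the parts you do sketch. In the minimal-types frame, uniqueness of the nonforking extension is \emph{immediate} from the definition of minimality (compare Lemma \ref{pre-frame2} here, where uniqueness ``follows from the definition of minimal type''); the weak diamond is instead consumed by the \emph{density} of minimal types and by the \emph{extension property} for minimal types (the analogues in this paper are Lemma \ref{minimal}, Lemma \ref{diamond} and Theorem \ref{ext}, corresponding to 2.7, 1.6(1) and 2.13 of \cite{sh576}; note that without the paper's extra assumption $2^{\lambda^+}>\lambda^{++}$ the counting in the density step is harder than the argument given here). Likewise, an almost-good $\lambda$-frame weakens \emph{local character} to weak local character while retaining full density of basic types, so your claim that ``only the reduced density and local-character requirements need be checked'' misidentifies which axioms are relaxed. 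These slips are repairable, but combined with the untouched non-structure step the proposal remains an outline of the known strategy rather than an argument for either conclusion.
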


The paper is organized as follows. Section 2 presents necessary background. Section 3 introduces the notion of a w-good frame, shows that w-good frames imply the existence of larger models and shows how to extend w-good frames under tameness and amalgamation. Section 4 presents an exposition of the proof of the main theorem of \cite{sh576}, with the additional hypothesis that $2^{\lambda^+} > \lambda^{++}$ and $(\lambda, \lambda^+)$-tameness. The proof presented avoids using the set-theoretic machinery of \cite[\S VII]{shelahaecbook} by using $(\lambda, \lambda^+)$-tameness and the results of Section 3.

This paper was written while the author was working on a Ph.D. under the direction of Rami Grossberg at Carnegie Mellon University and I would like to thank Professor Grossberg for his guidance and assistance in my research in general and in this work in particular. I thank Hanif Cheung for helpful conversations. I thank Sebastien Vasey for very useful comments on an early version.

\section{Preliminaries}

We present the basic concepts of abstract elementary classes that we will need for the development of this paper. These are further studied in \cite[\S 4 - 8]{baldwinbook09} and  \cite[\S 2, \S 4.4]{ramibook}.

\subsection{Basic notions}

  First we will fix some notation.

\begin{nota}\
\begin{itemize}
\item Given $M \in \K$ we denote the universe of $M$ by $|M|$ and its cardinality by $\| M \|$.
\item Let $LS(\K) \leq \lambda < \mu$ such that $\lambda$ is an infinite cardinals and $\mu$ is an infinite cardinal or infinity. Let $[\lambda, \mu) = \{ \kappa \in card : \lambda \leq \kappa < \mu \}$. Given an abstract elementary class $\K$ and $[\lambda, \mu)$ an interval of cardinals, $\K_{[\lambda, \mu)} = \{  M \in \K : \| M\| \in [\lambda, \mu) \}$. In particular we let $\K_{\{\lambda \}} = \K_{[\lambda, \lambda^+)}=\K_{\lambda}$. 

\end{itemize}
\end{nota}

Let us recall the following three properties. They play a very important role in this paper, although not every AEC satisfies them.

\begin{defin}Let $LS(\K) \leq \lambda < \mu$ such that $\lambda$ is an infinite cardinals and $\mu$ is an infinite cardinal or infinity.
\begin{enumerate}
\item $\K_{[\lambda, \mu)}$ has the \emph{amalgamation property} (or $\K$ has the $[\lambda, \mu)$-amalgamation property): if for every $M, N, R \in \K_{[\lambda, \mu)}$ such that $M \lea N, R$, there are $f$ $\K$-embedding and $R^* \in \K$ such that $f: N \rightarrow_{M} R^*$ and $R \lea R^*$. 
\item  $\K_{[\lambda, \mu)}$ has the \emph{joint embedding property} (or $\K$ has the $[\lambda, \mu)$-joint embedding property): if for every $M, N \in \K_{[\lambda, \mu)}$, there are $f$ $\K$-embedding and $R^* \in \K$ such that $f: M \rightarrow R^*$ and $ N \lea R^*$.

\item  $\K_{[\lambda, \mu)}$ has \emph{no maximal models}: if for every $M \in \K_{[\lambda, \mu)}$, there is $M^* \in \K$ such that $M \lta M^*$.
\end{enumerate}

\end{defin}

The following fact was first proven in \cite{sh88}, but a more straightforward proof appears in \cite[4.3]{grossberg2002}.

\begin{fact}\label{AP}
Assume $2^{\lambda}< 2 ^{\lambda^{+}}$. Let $\K$ be an AEC.  If $\Ii(\K, \lambda)=1\leq \Ii(\K, \lambda^+) < 2^{\lambda^{+}}$ then $\K_{\lambda}$ has the amalgamation property. 
\end{fact}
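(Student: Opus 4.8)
The plan is to argue by contraposition in the following quantitative form: if $\K_{\lambda}$ fails to have amalgamation while $\Ii(\K,\lambda)=1$, then one can build $2^{\lambda^{+}}$ pairwise non-isomorphic models of size $\lambda^{+}$, contradicting $\Ii(\K,\lambda^{+}) < 2^{\lambda^{+}}$; the hypothesis $2^{\lambda} < 2^{\lambda^{+}}$ enters precisely to supply the combinatorial device that makes this counting work. So suppose amalgamation fails in $\K_{\lambda}$. Then there is a triple $(M, N_{0}, N_{1})$ with $M \lea N_{0}$ and $M \lea N_{1}$, all of size $\lambda$, such that $N_{0}$ and $N_{1}$ cannot be amalgamated over $M$ inside $\K$.

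First I would manufacture, by induction on $\alpha<\lambda^{+}$, a tree of models $\langle M_{\eta} : \eta \in {}^{<\lambda^{+}}2\rangle$, each $M_{\eta} \in \K_{\lambda}$, that is $\lea$-increasing and continuous along every branch (so $M_{\eta} = \bigcup_{\beta<\alpha} M_{\eta\upp\beta}$ when the length $\alpha$ of $\eta$ is a limit), and such that at every node the two immediate successors $M_{\eta^{\frown}\langle 0\rangle}$ and $M_{\eta^{\frown}\langle 1\rangle}$ form an isomorphic copy of the non-amalgamable pair $(N_{0},N_{1})$ over an isomorphic copy of $M$ sitting on top of $M_{\eta}$. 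Categoricity in $\lambda$ is what keeps this going: it guarantees that every $M_{\eta}$ is isomorphic to the unique model of size $\lambda$, hence carries an isomorphic image of the base $M$ over which the bad triple can be re-copied, and it guarantees that the continuous unions taken at limit levels (chains of length $\leq\lambda$ of models of size $\lambda$) remain in $\K_{\lambda}$. Each full branch $\nu\in{}^{\lambda^{+}}2$ then yields a model $M_{\nu} := \bigcup_{\alpha<\lambda^{+}} M_{\nu\upp\alpha} \in \K_{\lambda^{+}}$.

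The heart of the argument is to turn the $2^{\lambda^{+}}$ branches into $2^{\lambda^{+}}$ isomorphism types. The obstruction is that distinct branches may a priori yield isomorphic models, so a naive tree is not enough; this is exactly where the assumption $2^{\lambda} < 2^{\lambda^{+}}$ is used, via the weak diamond principle on $\lambda^{+}$ (Devlin--Shelah), which it implies. I would define a coloring on ${}^{<\lambda^{+}}2$ that, at each level, records whether a guessed partial isomorphism between the two candidate initial segments can be extended one more step, and then use the weak-diamond predictor to choose, on a stationary set of levels, a successor that defeats that potential isomorphism. Because splitting below a common node reproduces the non-amalgamable pair, any isomorphism between two branch models, restricted to a level caught by the predictor, would force an amalgamation of $N_{0}$ and $N_{1}$ over $M$, which is impossible; the weak diamond is what lets us defeat all $2^{\lambda^{+}}$ candidate isomorphisms simultaneously with a single predictor. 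The output is a family of $2^{\lambda^{+}}$ pairwise non-isomorphic members of $\K_{\lambda^{+}}$.

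The main obstacle is this last bookkeeping step: setting up the coloring so that the predictor genuinely kills isomorphisms, and organizing the stationary-set combinatorics at limit stages so that enough levels are ``caught'' to block every potential isomorphism. Verifying that a single non-amalgamable triple can be propagated coherently throughout the tree (so that any two distinct branches remain non-amalgamable over their meet) is the other delicate point, and it is precisely the role of $\Ii(\K,\lambda)=1$ to provide, at every node, the embeddings needed to reinstall the bad configuration. Everything else --- closure under unions, the size estimates, and the final contradiction with $\Ii(\K,\lambda^{+})<2^{\lambda^{+}}$ --- is routine.
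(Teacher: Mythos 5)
Your overall strategy is the right one, and it is essentially the proof this paper relies on: Fact \ref{AP} is not proved in the paper at all but cited to \cite{sh88}, with the streamlined proof in \cite[4.3]{grossberg2002}, and that argument is exactly your contrapositive --- a tree $\langle M_\eta : \eta \in 2^{<\lambda^+}\rangle$ whose two successors at each node reproduce a non-amalgamable configuration, followed by a weak-diamond argument showing the branch models realize $2^{\lambda^+}$ isomorphism types; the very same engine appears inside this paper as Lemma \ref{diamond}. However, two points in your sketch are not mere bookkeeping, and as written they would break the proof.

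First, the bad triple must be installed with $M_\eta$ itself as its base, not ``over an isomorphic copy of $M$ sitting on top of $M_\eta$''. If $M_\eta \lta M'$ and $M_{\eta^{\wedge}0}, M_{\eta^{\wedge}1}$ are non-amalgamable only over $M'$, they may still be amalgamable over $M_\eta$, since amalgamation over the smaller model only asks the two embeddings to agree on $M_\eta$; and what the weak-diamond step consumes is precisely non-amalgamability over the node, in the form that $M_{\eta^{\wedge}0}$ does not embed over $M_\eta$ into any tree model above $M_{\eta^{\wedge}1}$ (this is the hypothesis of Lemma \ref{diamond}). The repair is what $\lambda$-categoricity actually provides: an isomorphism $\sigma: M \cong M_\eta$ along which $N_0$ and $N_1$ are transported so that the copied triple has base exactly $M_\eta$; your later phrase ``non-amalgamable over their meet'' is the correct requirement, and it is incompatible with the ``sitting on top'' phrasing. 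Second, a single weak-diamond predictor does not give you what you claim: one predictor $g$ on one stationary set defeats all candidate isomorphisms between the branch that follows $g$ and one fixed comparison branch, so it yields \emph{two} non-isomorphic models (non-categoricity in $\lambda^+$), whereas contradicting $\Ii(\K,\lambda^+) < 2^{\lambda^+}$ requires $2^{\lambda^+}$ pairwise non-isomorphic ones. The standard mechanism --- used in \cite[4.3]{grossberg2002} and visible in this paper's Lemma \ref{diamond} and the unnumbered Fact preceding it --- is to split $\lambda^+$ into $\lambda^+$ pairwise disjoint stationary sets $S_i$, each carrying $\Phi^{2}_{\lambda^+}(S_i)$, take one predictor $g_i$ per $S_i$, and attach to each $X \subseteq \lambda^+$ the branch $\eta_X$ that follows $g_i$ on $S_i$ for $i \in X$ and is $0$ elsewhere; distinct sets $X$ then yield non-isomorphic models. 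This partition device, together with the verification for arbitrary pairs $X \neq Y$ (which is genuinely more delicate than the comparison against the all-zeros branch), is the actual content of the counting step, and your sketch, which defers it as ``bookkeeping'', does not contain it.
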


\subsection{Galois types}
Let us begin by reviewing the concept of pre-type and some of its basic properties, pre-types will play a very important role in Section 4.

\begin{defin} \
\begin{enumerate}
\item The set of \emph{pre-types} is:
\[K_{\lambda}^{3}= \{ (a, M, N) : M \lea N, a \in |N| \backslash |M| \text{\, and \,} M, N \in \K_{\lambda} \}. \]
\item Given $(a_0, M_0, N_0), (a_1, M_1, N_1) \in K_{\lambda}^{3}$ we define:
\begin{enumerate}
\item $(a_0, M_0, N_0) \leq  (a_1, M_1, N_1)$ if and only if $M_0 \lea M_1$, $N_0 \lea N_1$ and $a_0 = a_1$.
\item $(a_0, M_0, N_0) < (a_1, M_1, N_1)$  if and only if  $(a_0, M_0, N_0) \leq  (a_1, M_1, N_1)$ and $M_0 \neq M_1$.
\end{enumerate}
\item Given $(a_0, M_0, N_0), (a_1, M_1, N_1) \in K_{\lambda}^{3}$ and $h: N_0 \rightarrow N_1$, we define $(a_0, M_0, N_0) \leq_{h}  (a_1, M_1, N_1)$ if and only if $h\upharpoonright_{M_0}: M_0 \rightarrow M_1$ is a $\K$-embedding,   $h: N_0 \rightarrow N_1$ is a $\K$-embedding and $h(a_0)=a_1$. 
\end{enumerate}
\end{defin}

We will also use the following property of pre-types, which is introduced in \cite[2.5]{sh576}. This will only be used in Section 4. 

\begin{defin}\label{d-reduced}
$(a_0, M_0, N_0) \in K_{\lambda}^{3}$ is \emph{reduced} if for any $(a_1, M_1, N_1) \in K_{\lambda}^{3}$ such that $(a_0, M_0, N_0) \leq (a_1, M_1, N_1)$ we have that $M_1 \cap N_0 = M_0$.
\end{defin}

The following appears as \cite[2.6(1)]{sh576} without a proof and it is proven in \cite[3.3.4]{jrsh875}.

\begin{fact}\label{reduce}
For every $(a_0, M_0, N_0) \in K_{\lambda}^{3}$ there is $(a_1, M_1, N_1) \in K_{\lambda}^3$ such that $(a_0, M_0, N_0) \leq (a_1, M_1, N_1)$ and $(a_1, M_1, N_1)$ is reduced. In that case, we say that reduced pre-types are dense in $K_{\lambda}^{3}$.
\end{fact}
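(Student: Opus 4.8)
The plan is to prove density by repeatedly \emph{absorbing} into the left model every element of the right model that could, in some extension, be forced into the left model; iterating this $\omega$ times and taking a union will produce a reduced pre-type above the given one. Call $b \in |N| \setminus |M|$ \emph{absorbable over} $(a, M, N) \in K_\lambda^3$ if there is $(a, M', N') \in K_\lambda^3$ with $(a, M, N) \leq (a, M', N')$ and $b \in |M'|$. Note first that $a$ itself is never absorbable: any $(a, M', N') \geq (a, M, N)$ is a pre-type, so $a \in |N'| \setminus |M'|$ and in particular $a \notin |M'|$. Thus all absorption takes place among elements other than $a$, and $a$ remains outside the left model throughout, keeping every triple we build a genuine pre-type.

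The key step is a \emph{full absorption} operation: given $(a, M, N) \in K_\lambda^3$, produce $(a, \bar M, \bar N) \geq (a, M, N)$ in $K_\lambda^3$ such that every element of $|N|$ absorbable over $(a, M, N)$ lies in $|\bar M|$. Let $B = \{ b \in |N| \setminus |M| : b \text{ is absorbable over } (a,M,N)\}$; since $B \subseteq |N|$ we may enumerate $B = \langle b_\xi : \xi < \theta \rangle$ with $\theta \leq \lambda$. Build an increasing continuous chain $\langle (a, P_\xi, Q_\xi) : \xi \leq \theta \rangle$ in $K_\lambda^3$ with $(a, P_0, Q_0) = (a, M, N)$, taking unions at limits (these remain in $\K_\lambda$ by the chain axioms, as the length is at most $\lambda$). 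At stage $\xi + 1$, fix a witness $(a, M'_\xi, N'_\xi) \geq (a, M, N)$ with $b_\xi \in |M'_\xi|$. Both $Q_\xi$ and $N'_\xi$ extend $N$, so by amalgamation in $\K_\lambda$ there are $R^* \in \K$ and $\K$-embeddings of $Q_\xi$ and $N'_\xi$ into $R^*$ fixing $N$; identifying $Q_\xi$ with its image, the image of $b_\xi$ is $b_\xi$ and lies in the image of $M'_\xi$, witnessing that $b_\xi$ can be placed to the left. Using the Löwenheim--Skolem axiom, choose $P_{\xi+1} \lea Q_{\xi+1} \lea R^*$ of size $\lambda$ with $|P_\xi| \cup \{ b_\xi \} \subseteq |P_{\xi+1}|$ and $P_\xi \lea P_{\xi+1}$. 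Since $a$ is not absorbable, $a$ can be kept out of $P_{\xi+1}$, so $(a, P_{\xi+1}, Q_{\xi+1})$ is again a pre-type above $(a, P_\xi, Q_\xi)$. Setting $(a, \bar M, \bar N) = (a, P_\theta, Q_\theta)$ gives the desired full absorption.

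Now iterate. Starting from the given $(a_0, M_0, N_0)$, let $(a_0, M_{n+1}, N_{n+1})$ be a full absorption of $(a_0, M_n, N_n)$, obtaining an increasing chain in $K_\lambda^3$; set $M_\omega = \bigcup_{n<\omega} M_n$ and $N_\omega = \bigcup_{n < \omega} N_n$. By the AEC chain axioms $M_\omega \lea N_\omega$ are in $\K_\lambda$, we have $a_0 \in |N_\omega| \setminus |M_\omega|$ (as $a_0 \notin |M_n|$ for all $n$), and $(a_0, M_0, N_0) \leq (a_0, M_\omega, N_\omega)$. It remains to check that $(a_0, M_\omega, N_\omega)$ is reduced. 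Suppose $(a_0, M', N') \geq (a_0, M_\omega, N_\omega)$ and pick $b \in (|M'| \cap |N_\omega|) \setminus |M_\omega|$; then $b \in |N_n|$ for some $n$, and since $(a_0, M', N') \geq (a_0, M_n, N_n)$ with $b \in |M'|$, the element $b$ is absorbable over $(a_0, M_n, N_n)$. By the full absorption at stage $n$ we then have $b \in |M_{n+1}| \subseteq |M_\omega|$, a contradiction. Hence $|M'| \cap |N_\omega| = |M_\omega|$, so $(a_0, M_\omega, N_\omega)$ is reduced, and density follows.

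The main obstacle is the full absorption step: one must absorb \emph{all} absorbable elements simultaneously, yet the witnesses for different elements need not be mutually compatible, so combining them requires amalgamation in $\K_\lambda$ (available in the contexts where this fact is applied). A secondary technical point is ensuring, throughout the internal induction and at the unions, that $a_0$ stays outside the left model so that every triple remains a pre-type; this is precisely guaranteed by the observation that $a_0$ is never absorbable.
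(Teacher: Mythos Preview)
The paper does not prove this fact itself; it cites \cite[3.3.4]{jrsh875}. Your approach differs from that proof and contains a genuine gap in the full absorption step.

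The problematic sentence is ``Since $a$ is not absorbable, $a$ can be kept out of $P_{\xi+1}$.'' After amalgamating $Q_\xi$ and $N'_\xi$ over $N$ into $R^*$, you invoke the L\"owenheim--Skolem axiom to obtain $P_{\xi+1} \lea R^*$ containing $P_\xi \cup \{b_\xi\}$. But LST only produces a small $\lea$-substructure containing a prescribed set; it gives no mechanism for \emph{excluding} a specified element, so nothing prevents $a \in P_{\xi+1}$. The non-absorbability of $a$ is a tautology (a pre-type by definition has $a$ outside the middle model) and yields no contradiction here: if LST hands you a $P_{\xi+1}$ containing $a$, the triple $(a, P_{\xi+1}, Q_{\xi+1})$ is simply not a pre-type, and you are stuck. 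What you are implicitly assuming is that the pre-types $(a, P_\xi, Q_\xi)$ and $(a, M'_\xi, N'_\xi)$, both $\geq (a, M, N)$, admit a common $\leq$-upper bound in $K_\lambda^3$; this ``amalgamation of pre-types over a pre-type'' does not follow from amalgamation of models in $\K_\lambda$. Note also that the Fact as stated carries no amalgamation hypothesis, so your reliance on it is already a warning sign.

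The argument in \cite{jrsh875} avoids this entirely and uses no amalgamation. One builds a single increasing continuous chain $\langle (a_0, M_\alpha, N_\alpha) : \alpha < \lambda^+\rangle$: whenever $(a_0, M_\alpha, N_\alpha)$ is not reduced, the \emph{witness of non-reducedness itself} is a triple $(a_0, M_{\alpha+1}, N_{\alpha+1}) \geq (a_0, M_\alpha, N_\alpha)$ together with some $c_\alpha \in (M_{\alpha+1}\cap N_\alpha)\setminus M_\alpha$, so no merging of two separate extensions is ever required. If the construction never halts, the $c_\alpha$ are pairwise distinct (since $c_\alpha \in M_{\alpha+1}\subseteq M_\beta$ but $c_\beta \notin M_\beta$ for $\alpha<\beta$), and for limit $\alpha$ continuity gives $c_\alpha \in N_{\beta(\alpha)}$ for some $\beta(\alpha)<\alpha$; Fodor's lemma then places $\lambda^+$ many distinct $c_\alpha$ inside a fixed $N_{\beta^*}$ of size $\lambda$, a contradiction. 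Hence some $(a_0, M_\alpha, N_\alpha)$ with $\alpha<\lambda^+$ is reduced.
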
 

Let us recall the concept of Galois-type, this was introduced by Shelah in \cite{sh300}.

\begin{defin}\
\begin{enumerate}
\item Given $(a_0, M_0, N_0), (a_1, M_1, N_1) \in K_{\lambda}^{3}$ we say $(a_0, M_0, N_0)E_{at} (a_1, M_1, N_1)$ if $M:=M_0=M_1$ and there are $\{f_0, f_1\}$ and $N \in \K$ such that $f_l: N_l \rightarrow_{M} N$ for each $l \in \{0, 1\}$ and $f_0(a_0)=f_1(a_1)$. 
\item Let $E$ be the transitive closure of $E_{at}$.
\item Given $(a, M, N) \in K^3_{\lambda}$, we define the \emph{Galois-type} (also refer to as orbital type in the literature) as $\gtp(a/M, N) = [(a, M, N)]_E$. 
\item Given $M \in \K_{\lambda}$, let $\gS(M)= \{ \gtp(a/M, N) : M \lea N \in \K_\lambda \text{ and } a \in |N| \}$ and $\gS^{na}(M)= \{ \gtp(a/M, N) : (a, M, N) \in K_{\lambda}^3\}$. $\gS^{na}(M)$ is the set of nonalgebraic types.
\end{enumerate}
\end{defin}

The following is straightforward.

\begin{fact}
If $\K$ is an AEC and $\K_{\lambda}$ has the amalgamation property then $E_{at}$ is transitive. Hence $E_{at}= E$. 
\end{fact}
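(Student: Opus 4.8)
The plan is to verify transitivity of $E_{at}$ directly from amalgamation, after which the equality $E = E_{at}$ is immediate: since $E$ is by definition the transitive closure of $E_{at}$, once $E_{at}$ is itself transitive it must coincide with its own transitive closure. (Reflexivity and symmetry are not even needed for this implication; only transitivity is.)

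So suppose $(a_0, M, N_0)\, E_{at}\, (a_1, M, N_1)$ and $(a_1, M, N_1)\, E_{at}\, (a_2, M, N_2)$, where the common first coordinate $M$ is forced by the definition of $E_{at}$. Unwinding the definition, I obtain $N, N' \in \K$ together with $\K$-embeddings $f_0 : N_0 \to_M N$, $f_1 : N_1 \to_M N$ with $f_0(a_0) = f_1(a_1)$, and $g_1 : N_1 \to_M N'$, $g_2 : N_2 \to_M N'$ with $g_1(a_1) = g_2(a_2)$. First I would pass to $\lambda$-sized models: by Löwenheim--Skolem there is $N^- \lea N$ of size $\lambda$ containing $f_0(N_0) \cup f_1(N_1)$, and likewise for $N'$, so without loss of generality $N, N' \in \K_{\lambda}$ and the $\lambda$-amalgamation property applies to them.

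The core step is to amalgamate $N$ and $N'$ over $N_1$ along $f_1$ and $g_1$. Amalgamating over $N_1$ yields $N'' \in \K_{\lambda}$ and $\K$-embeddings $p : N \to N''$, $q : N' \to N''$ with $p \circ f_1 = q \circ g_1$. Setting $h_0 := p \circ f_0$ and $h_2 := q \circ g_2$, the points $a_0$ and $a_2$ get identified, since $h_0(a_0) = p(f_0(a_0)) = p(f_1(a_1))$ and $h_2(a_2) = q(g_2(a_2)) = q(g_1(a_1))$, and these agree because $p \circ f_1 = q \circ g_1$. Moreover $h_0$ and $h_2$ agree on $M$: for $m \in M$ we have $h_0(m) = p(f_0(m)) = p(f_1(m)) = q(g_1(m)) = q(g_2(m)) = h_2(m)$, using that $f_0, f_1, g_1, g_2$ all fix $M$ pointwise together with $p \circ f_1 = q \circ g_1$.

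The one genuine subtlety, and the step I expect to require the most care, is that $h_0$ and $h_2$ need not fix $M$ pointwise, whereas $E_{at}$ demands embeddings \emph{over} $M$. This is repaired by renaming: writing $e := h_0 \upharpoonright M = h_2 \upharpoonright M$, I use the isomorphism axiom of AECs to produce $\widehat{N} \in \K$ with $M \lea \widehat{N}$ and an isomorphism $\theta : N'' \to \widehat{N}$ with $\theta \circ e = \id_M$ (concretely, replace the submodel $e(M) \lea N''$ by $M$ itself and transport the remaining structure along $e^{-1}$). Then $\theta \circ h_0$ and $\theta \circ h_2$ are $\K$-embeddings over $M$ into $\widehat{N}$ that still identify $a_0$ and $a_2$, which is exactly the witness required for $(a_0, M, N_0)\, E_{at}\, (a_2, M, N_2)$. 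This establishes transitivity of $E_{at}$, and hence $E_{at} = E$.
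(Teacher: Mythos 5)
Your proof is correct. The paper gives no proof of this fact (it is labeled ``straightforward''), and your argument is exactly the intended one: amalgamate the two witnessing amalgams over $N_1$ (after a L\"{o}wenheim--Skolem reduction to $\K_\lambda$ so that the $\lambda$-amalgamation property applies), observe that the composed embeddings identify $a_0$ with $a_2$ and agree on $M$, and then rename via the isomorphism axiom so that they fix $M$ pointwise --- your recognition that the amalgam embeddings need not fix $M$, and that this must be repaired by renaming (or, equivalently, by first renaming so that $N_1 \lea N, N'$ and invoking amalgamation over $N_1 \gea M$ directly), is precisely the one point requiring care, and you handle it correctly.
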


The concept of tameness was introduced by Grossberg and VanDieren in \cite{tamenessone}. We use this property to avoid using the set-theoretic machinery of \cite[\S VII]{shelahaecbook} mentioned in the introduction. The idea of using tameness instead of set-theoretic ideas traces back to \cite{tamenessone} and \cite{tamenesstwo}.

\begin{defin}
We say $\K$ is \emph{$(< \kappa)$-tame} if for any $M \in \K$ and $p \neq q \in \gS(M)$,  there is $N \lea M$ such that $\| N \| < \kappa$ and $p\upharpoonright_{N} \neq q\upharpoonright_{N}$. By $\K$ $\kappa$-tame we mean $(<\kappa^+)$-tame.  If we write $(\kappa, \lambda)$-tame we restrict to $M \in \K_{\lambda}$. 
\end{defin}

\section{w-good frames}

\subsection{Frames}

The concept of a good $\lambda$-frame is introduced in \cite[\S II.2, p. 259-263]{shelahaecbook}. We will follow the simplification and generalization given in \cite{vaseyc} and \cite{bova}. 

First let us recall the notion of a pre-frame.

\begin{defin}\label{p-frame}
Let $\lambda < \mu$ where $\lambda$ is an infinite cardinal and $\mu$ is an infinite cardinal or infinity. A \emph{pre-$[\lambda, \mu)$-frame} is a triple $( \K, \dnf, \gS^{bs})$ where the following properties hold:
\begin{enumerate}
\item $\K$ is an abstract elementary class with $\lambda \geq \LS(\K)$ and $\K_{\lambda} \neq \emptyset$.. 
\item $\gS^{bs} \subseteq \bigcup_{M \in \K_{[\lambda, \mu)}} \gS^{na}(M)$. Let $\gS^{bs}(M)= \gS(M) \cap \gS^{bs}$. 
 \item $\dnf$ is a relation on quadruples $(M_{0}, M_{1},  a , N)$, where $M_{0} \lea M_{1} \lea N$, $a \in N$ and $M_{0}, M_{1}, N \in \K_{[\lambda, \mu)}$. We write $ a \dnf_{M_{0}}^{N} M_1$ or $\gtp(a/M_1, N)$ does not fork over $M_{0}$ (which is well-defined by the next three properties).

\item \underline{Invariance:} If $f : N \cong N'$ and $ a \dnf_{M_{0}}^{N} M_1$ then $f(a) \dnf_{f[M_{0}]}^{N'} f[M_1]$. If $\gtp(a/M_{1}, N) \in \gS^{bs}(M_{1})$, then $\gtp(f(a)/ f[M_{1}], N') \in \gS^{bs}(f[M_{1}])$. 
\item \underline{Monotonicity:} If $ a \dnf_{M_{0}}^{N} M_1$ and $M_{0} \lea M_{0}' \lea M_{1}' \lea M_{1}\lea N' \lea N \lea N''$ with $N'' \in \K_{[\lambda, \mu)}$ and $a \in N'$, then $ a \dnf_{M_{0}'}^{N'} M_1'$ and $ a \dnf_{M_{0}'}^{N''} M_1'$. 
\item \underline{Nonforking types are basic:} If $ a \dnf_{M}^{N} M$, then $\gtp(a/M, N) \in \gS^{bs}(M)$. 

\end{enumerate}
\end{defin}

To simplify the comparison between the different kinds of frames we will introduce below, we recall the notion of good frame.

\begin{defin}\label{good-frame}
Let $\lambda < \mu$ where $\lambda$ is an infinite cardinal and $\mu$ is an infinite cardinal or infinity. A \emph{good $[\lambda, \mu)$-frame} is a triple $(\K, \dnf, \gS^{bs})$ where the following properties hold:
\begin{enumerate}
\item $( \K, \dnf, \gS^{bs})$  is a pre-$[\lambda, \mu)$-frame.
\item $\K_{[\lambda, \mu)}$ has amalgamation, joint embedding and no maximal models.
\item \underline{bs-Stability:} $|\gS^{bs} (M)| \leq ||M||$ for all $M\in \K_{[\lambda, \mu)}$.
\item \underline{Density of basic types:} If $M \lta N$ are both in $\K_{[\lambda, \mu)}$, then there is an $a \in |N|$ such that $\gtp(a/M , N) \in \gS^{bs}(M)$.
\item \underline{Existence of nonforking extension:} If $p \in \gS^{bs}(M)$ and $M \lea N$ with $N \in \K_{[\lambda, \mu)}$, then there is $q \in \gS^{bs}(N)$ that does not fork over $M$ and extends $p$.
\item \underline{Uniqueness:} If $M \lea N$ both in $\K_{[\lambda, \mu)}$, $p,q \in \gS(N)$ do not fork over $M$ and $p\upharpoonright _ M =  q \upharpoonright _ M$, then $p = q$. 
\item \underline{Symmetry:} If $ a_1 \dnf_{M_{0}}^{N} M_2$, $a_2 \in M_{2}$ and $\gtp(a_2/ M_{0}, N) \in \gS^{bs}(M_{0})$, then there are $M_1$ and $N'\gea N$ with $a_{1} \in M_1$ and $M_1, N \in \K_{[\lambda, \mu)}$ such that $ a_2 \dnf_{M_{0}}^{N'} M_1$.
\item \underline{Local character:} If $\delta < \mu$ is a limit ordinal, $\{ M_i : i < \delta \} \subseteq \K_{[\lambda, \mu)}$ is an increasing continuous chain and $p \in \gS^{bs}(M_\delta)$, then there is an $i < \delta$ such that $p$ does not fork over $M_i$. 
\item \underline{Continuity:} If $\delta < \mu$ is a limit ordinal, $\{ M_i : i < \delta \} \subseteq \K_{[\lambda, \mu)}$ is an increasing continuous chain, $\{p_{i} : i < \delta \}$ with $p_i \in \gS^{bs}(M_i)$ and for $i < j < \delta$ implies that $p_i = p_j \upharpoonright _ {M_i}$ and $p \in \gS^{na} (M_\delta)$ is an upper bound for $\{p_{i} : i < \delta \}$, then $p \in \gS^{bs} (M_\delta)$ . Moreover, if each $p_i$ does not fork over $M_0$, then neither does $p$.
\item \underline{Transitivity:} If $M_0 \leq M_1 \leq M_2$ with $M_0, M_1, M_2 \in \K_{[\lambda, \mu)}$, $p \in \gS(M_2)$ does not fork over $M_1$ and  $p\upharpoonright_ {M_1}$ does not fork over $M_0$, then $p$ does not fork over $M_0$. 
\end{enumerate}
\end{defin}

Recall the following notation which was introduced in \cite{vaseyc}.

\begin{nota}\label{notation}
Given $\Ll$ a list of properties a $good^{-\Ll} \lambda$-frame is a pre-$\lambda$-frame that satisfies all the properties of a good $\lambda$-frame except possibly the properties listed in $\Ll$. We abbreviate stability by St, density by D, symmetry by S and local character by Lc.

\end{nota}

In \cite{jrsh940} Jarden and Shelah introduced the following weakening of local character.

\begin{defin}
A $( \K, \dnf, \gS^{bs})$  pre-$\lambda$-frame satisfies \emph{weak local character} if there is a 2-ary relation $\leq^{\ast}$ in $\K_{\lambda}$ such that:
\begin{itemize}
\item  If $M \leq^\ast N$ both in $\K_\lambda$ then $M \lea N$.
\item For every $M \in \K_\lambda$, there is $N \in \K_\lambda$ such that $M <^{\ast} N$.
\item If $M \leq^\ast N \lea R$ all in $\K_\lambda$, then $M \leq^\ast R$.
\item If $\delta < \lambda^+$ is a limit ordinal and $\{ M_i : i < \delta +1  \} \subseteq \K_{\lambda}$ is an $\leq^\ast$-increasing continuous chain, then there are $a \in |M_{\delta +1}| \backslash |M_\delta|$ and $\alpha < \delta$ such that $\gtp(a/M_\delta, M_{\delta +1})\in \gS^{bs}(M_\delta)$ and does not fork over $M_\alpha$. 
\end{itemize}
\end{defin}

Since good $\lambda$-frames were introduced several weaker notions have been studied. In the definition below we recall all of them and write in parenthesis the paper in which they were introduced.

\begin{defin}\label{s-frame} \
\begin{enumerate}
\item (\cite{jrsh875})  A \emph{semi-good $\lambda$-frame} is a $good^{-(St)} \lambda$-frame with the additional property that for any $M \in \K_{[\lambda, \mu)} (|\gS^{bs}(M)| \leq \| M \|^{+})$. 
\item (\cite{jrsh940}) An \emph{almost-good $\lambda$-frame} is a $good^{-(Lc)} \lambda$-frame with the additional property that it satisfies weak local character.
\item (\cite{vaseyc}) A \emph{$good^{-(S)} \lambda$-frame} is a good $\lambda$-frame without symmetry.
\item (\cite{vaseya}) A \emph{$good^{-} \lambda$-frame} is a $good^{-(St, S)} \lambda$-frame.

\end{enumerate}
Diagram \ref{diagram} shows how they compare to one another. 
\end{defin}

Before introducing the notion of a w-good frame, we will introduce a notion of weak density.

\begin{defin}\label{extension-frame}
 A $( \K, \dnf, \gS^{bs})$  pre-$[\lambda, \mu)$-frame has \emph{weak density for basic types} when: if $M \in \K_{\lambda}$ and $M  \lta N \in \K_{[\lambda, \mu)}$ then there are $a \in |N| \backslash |M|$ and $M' \lta N'$ both in $\K_{[ \lambda, \mu)}$ such that $a \in |N'| \backslash |M'|$, $\gtp(a / M', N') \in \gS^{bs}(M)$ and $(a, M, N) \leq (a, M', N')$. 
\end{defin}

Observe that if a pre-frame has density for basic types then it has weak density for basic types. We do not know if under the other axioms of a good $\lambda$-frames the conditions are equivalent (but we suspect it is not the case).\footnote{ Shelah shows in \cite[\S VI.7.4]{shelahaecbook} that under additional hypothesis weak density implies density.}

We are ready to introduce the notion of a w-good frame.

\begin{defin}Let $\lambda < \mu$ where $\lambda$ is an infinite cardinal and $\mu$ is an infinite cardinal or infinity. 
A \emph{w-good $[\lambda, \mu)$-frame} is triple $(\K, \dnf, \gS^{bs})$ where the following properties hold:

\begin{enumerate}
    \item $(\K, \dnf, \gS^{bs})$  is a pre-$[\lambda, \mu)$-frame
    \item $\K_{[\lambda, \mu)}$ has amalgamation, joint embedding and no maximal models.
    \skipitems{2}
\item[$(4)^-$]  \underline{Weak density}
    \item \underline{Existence of nonforking extension}
\item \underline{Uniqueness}
    \skipitems{2}
    \item \underline{Continuity}
  \end{enumerate}

Using the notation introduced in \ref{notation}, a w-good $[\lambda, \mu)$-frame is a $good^{-(St,D, S, Lc)}\lambda$-frame with the additional property that it satisfies weak density. 
\end{defin}

\begin{remark}\
\begin{itemize}

\item As in \cite[\S II.2.18]{shelahaecbook} one can show that in a w-good frame transitivity of non-forking holds. 
\item As we can see by comparing Definition \ref{s-frame} and Definition \ref{extension-frame}, a w-good frame is weaker than all the notions presented in Definition \ref{s-frame}. 
\end{itemize}

\end{remark}

It is natural to ask which of the notions introduced in Definition \ref{s-frame} and Definition \ref{extension-frame} are strictly stronger. In \cite[\S 2.2]{jrsh875} Jarden and Shelah showed that good $\lambda$-frames are strictly stronger than semi-good $\lambda$-frames. Adapting an example of \cite[\S 2]{jrsh875}, we show that w-good $\lambda$-frames are strictly stronger than pre-$\lambda$-frames.

\begin{example}
Let $L(\K)= \{ <\}$, where $<$ is a binary relation, and $\K = ( Mod(T_{LO}), \subseteq)$, where $T_{LO}$ is the first order theory of linear orders. Let $\s = (\K, \gS^{na}, \dnf)$ where for $M_0, M_1, N \in \K_\lambda$: $ a \dnf_{M_{0}}^{N} M_1$ if and only if $M_0 \subseteq M_1 \subseteq N$ and $a \in |N| \backslash |M_1|$. It is trivial to check that $\s$ is a pre-$\lambda$-frame. Moreover, the uniqueness property fails so $\s$ is not a w-good $\lambda$-frame.

\end{example}

The following example shows that $good^{-} \lambda$-frames are strictly stronger than w-good $\lambda$-frames. This example appears in a different context in other papers (\cite[II.6.4]{shelahaecbook}, \cite[6.6]{adler} and \cite[4.15]{canonicity}).
\begin{example}\label{gra}
Let $L(\K) =\{ E\}$, where $E$ is a binary relation, and $\K= (Mod(T_{ind}), \preceq)$, where $T_{ind}$ is the first-order theory of the random graph.  Let $\s = (\K, \gS^{na}, \dnf)$ where for $M_0, M_1, N \in \K_\lambda$: $ a \dnf_{M_{0}}^{N} M_1$ if and only if $M_0 \preceq M_1 \preceq N$, $a \in |N| \backslash |M_1|$ and there are no edges between $a$ and $|M_1| \backslash |M_0|$.  

It is easy to check that $\s$ is a w-good $\lambda$-frame, we show that $\s$ does not have local character. Build $\{M_i : i < \omega \} \subseteq \K_{\lambda}$ strictly increasing and continuous. Let $M_\omega = \bigcup_{i< \omega} M_i$ and let $N \in \K_\lambda$ such that $M_\omega \preceq N$ and there is $a\in |N| \backslash |M_\omega|$ such that for every $b \in M_\omega$ there is an edge between $a$ and $b$. Observe that $tp(a/M_\omega, N)$ does not fork over $M_\omega$, but for any $i < \omega$  $tp(a/M_\omega, N)$ forks over $M_i$.

Therefore, $\s$ is a w-good $\lambda$-frame and it is not a $good^{-} \lambda$-frame.
\end{example}

 Adapting another example of \cite[\S 2]{jrsh875}, we show that semi-good $\lambda$-frames and $good^{-(S)} \lambda$-frames are strictly stronger than $good^{-} \lambda$-frames. Moreover, the example also exhibits that almost-good $\lambda$-frames are strictly stronger than w-good $\lambda$-frames.
\begin{example}
Suppose that $2^{\lambda} \geq \lambda^{++}$. Let $L(\K)= \{R_\alpha : \alpha < \lambda \}$, where each $R_{\alpha}$ is a unary predicate, and $\K = ( L(\K)\text{-structures }, \subseteq)$. Let $\s = (\K, \gS^{na}, \dnf)$ where for $M_0, M_1, N \in \K_\lambda$: $ a \dnf_{M_{0}}^{N} M_1$ if and only if $M_0 \subseteq M_1 \subseteq N$ and $a \in |N| \backslash |M_1|$.

$\K_\lambda$ satisfies amalgamation, joint embedding  and no maximal models. Moreover, for every $M_0, M_1, N_0, N_1 \in \K_{\lambda}$, $a_0 \in |N_0| \backslash |M_1|$ and $a_1 \in  |N_1| \backslash |M_1|$ it follows that:
\[ \gtp(a_0/ M_0, N_0) = \gtp(a_1/ M_1, N_1) \text{ if and only if } \{ \alpha < \lambda : a_0 \in R_\alpha^{N_0}\} = \{ \alpha < \lambda : a_1 \in R_\alpha^{N_1}\} .\]

Using this property it is easy to show that all the conditions of a  $good^{-} \lambda$-frame are satisfied and that for any $M \in\K_\lambda (|\gS^{bs}(M)| = 2^{\lambda})$. Since $2^{\lambda}\geq \lambda^{++}$ it follows that $\s$ is neither a semi-good $\lambda$-frame or a $good^{-(S)} \lambda$-frame. Observe that the hypothesis that  $2^{\lambda} \geq \lambda^{++}$ is only used to show that $\s$ is not a semi-good $\lambda$-frame.

For the moreover part, it is clear that $\s$ is a w-good $\lambda$-frames, but not an almost-good $\lambda$-frame. 
\end{example} 

Below we revise the diagram of the introduction, we write ``s" above those arrows for which it is known that the source frame is strictly stronger than the target frame and we write ``s*" above those arrows for which it is known that the source frame is strictly stronger than the target frame but under some set-theoretic hypothesis.\\
 \begin{equation}
\begin{tikzpicture}[auto,node distance=1.5cm]
  %
  \node[entity] (node1) {good $\lambda$-frame}
  ;
\node[entity] (rel0) [above right = of node1] {semi-good $\lambda$-frame};

 \node[entity] (rel11) [right = of node1] {$good^{-(S)} \lambda$-frame};

\node[entity] (rel00) [right = of rel0] {$good^{-} \lambda$-frame};

  \node[entity] (rel1) [below right = of node1] {almost-good $\lambda$-frame};

  \node[entity] (node2) [above right = of rel1]	{w-good $\lambda$-frame};

  \node[entity] (node3) [right = of node2]	{pre-$\lambda$-frame};

  \path [->] (node1) edge node {} (rel1);
\path [->] (rel1) edge	 node {s}	(node2);
\path [->](node1) edge node {s} (rel0);
\path [->]  (rel0) edge	 node {s*}	(rel00);
\path[->] (rel11) edge node{s} (rel00);
\path [->] (rel00) edge node {s} (node2);
\path[->] (node2) edge node{s} (node3);
\path[->](node1) edge node{} (rel11);

\end{tikzpicture}
\end{equation}\\

\begin{question}
Are any of the notions introduced above the same? Are all the notions introduced above the same under some additional hypothesis on $\K$?
\end{question}

\begin{question}
Let $T$ be a first-order theory. It is easy to show that if $T$ is $\lambda$-stable then $T$ has a w-good $\lambda$-frame. Example \ref{gra} shows that simple theories might have a w-good $\lambda$-frame. So the question is: under what hypothesis does $T$ have a w-good  $\lambda$-frame?

Another interesting question in this neighborhood is the following: is there a w-good $\lambda$-frame on a $\lambda$-stable theory $T$ different from first-order nonforking?
\end{question}

\subsection{Inside a w-good $[\lambda, \mu)$-frame}
Let us recall the definition of a coherent sequence of types. It appears in print in \cite{extendingframes}.

\begin{defin}
Given $\{ M_i : i < \delta \}$ an increasing continuous chain and $\{ p_i \in \gS^{na}(M_i) : i < \delta \}$ an increasing sequence of types, the sequence is a \emph{coherent sequence of types} if and only if there are $\{ (a_i, N_i) : i < \delta \}$ and $\{ f_{j,i} : j < i < \delta \}$ such that:
\begin{enumerate}
\item $f_{j,i}: N_j \rightarrow N_i$.
\item For all $k < j < i$, we have $f_{k,i} = f_{j,i} \circ f_{k,j}$.
\item $\gtp(a_i/ M_i, N_i) = p_i$.
\item $f_{j,i}\upharpoonright_{M_j}=\id_{M_j}$.
\item $f_{j,i}(a_j)= a_i$.
\end{enumerate} 
\end{defin}

The following lemma is straightforward but due to its importance in what follows we will sketch the proof.

\begin{lemma}
Let $\{ p_i \in \gS^{na}(M_i) : i < \delta \}$ a coherent sequence of types, then there is $p \in \gS^{na}(M_{\delta})$ upper bound for the sequence of types, i.e., for every $i< \delta ( p \text{ extends } p_i )$. 
\end{lemma}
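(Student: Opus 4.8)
The plan is to realize $p$ as the Galois type of a single element over $M_\delta$ in a model $N_\delta$ built as the \emph{direct limit} of the coherent system $\{(N_i, f_{j,i})\}$ supplied by the definition of a coherent sequence. Since $\K$ is an AEC, it is closed under directed colimits of $\K$-embeddings, so such an $N_\delta$ exists together with $\K$-embeddings $f_{i,\delta} : N_i \to N_\delta$ satisfying $f_{i,\delta} = f_{j,\delta} \circ f_{i,j}$ for all $i < j < \delta$. Because each $f_{j,i}$ restricts to the identity on $M_j$ by clause (4), and the chain $\{M_i : i < \delta\}$ is continuous with union $M_\delta$, I would take the colimit in a way compatible with this chain, so that $M_\delta \lea N_\delta$ and each $f_{i,\delta}\upharpoonright_{M_i}$ is the inclusion $M_i \hookrightarrow N_\delta$.

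Next I would define the candidate realizing element $a_\delta := f_{i,\delta}(a_i)$ and check independence of $i$: for $i < j$, clause (5) gives $f_{i,j}(a_i) = a_j$, whence $f_{i,\delta}(a_i) = f_{j,\delta}(f_{i,j}(a_i)) = f_{j,\delta}(a_j)$ using the coherence equation (2). Setting $p := \gtp(a_\delta/M_\delta, N_\delta)$, I must first see that $p$ is nonalgebraic, i.e.\ $a_\delta \notin |M_\delta|$. If it were, then $a_\delta \in |M_k|$ for some $k < \delta$; since $f_{k,\delta}\upharpoonright_{M_k}$ is the inclusion while $f_{k,\delta}(a_k) = a_\delta$, injectivity of the $\K$-embedding $f_{k,\delta}$ would force $a_k \in |M_k|$, contradicting $(a_k, M_k, N_k) \in K_\lambda^{3}$. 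Hence $p \in \gS^{na}(M_\delta)$.

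For the upper bound property I must show $p\upharpoonright_{M_i} = p_i$ for each $i < \delta$. Here $p\upharpoonright_{M_i} = \gtp(a_\delta/M_i, N_\delta)$, and the map $f_{i,\delta} : N_i \to N_\delta$ is a $\K$-embedding with $f_{i,\delta}\upharpoonright_{M_i} = \id_{M_i}$ and $f_{i,\delta}(a_i) = a_\delta$. This is precisely a witness that $(a_i, M_i, N_i)\, E_{at}\, (a_\delta, M_i, N_\delta)$: amalgamate $N_i$ and $N_\delta$ over $M_i$ inside $N_\delta$ via $f_{i,\delta}$ and the inclusion, which agree on $M_i$ and send $a_i$, $a_\delta$ to the common point $a_\delta$. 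Therefore $\gtp(a_i/M_i, N_i) = \gtp(a_\delta/M_i, N_\delta)$, that is $p_i = p\upharpoonright_{M_i}$, as required.

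The main obstacle is the bookkeeping in the first step: one must take the direct limit of the $N_i$ compatibly with the already-fixed continuous chain $\{M_i\}$, so that $M_\delta$ sits inside $N_\delta$ as a $\K$-substructure and the structure maps $f_{i,\delta}$ genuinely extend the inclusions $M_i \hookrightarrow M_\delta$. This is where the AEC axioms (closure under directed colimits of $\K$-embeddings together with the coherence axiom for $\lea$) do the real work. Once $N_\delta$ and the $f_{i,\delta}$ are in hand with these correct restrictions, the remaining verifications are immediate from the definitions of $E_{at}$ and of $\gS^{na}$, which is why the lemma is ``straightforward''.
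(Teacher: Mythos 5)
Your proof is correct and follows essentially the same route as the paper's: form the direct limit of the coherent system $(N_i, f_{j,i})$ compatibly with the chain $\{M_i\}$ so that $M_\delta \lea N_\delta$ and the limit maps restrict to inclusions, let $p$ be the type of the common image of the $a_i$, rule out algebraicity by the same injectivity argument, and witness $p\upharpoonright_{M_i} = p_i$ via $f_{i,\delta}$ and the identity as an $E_{at}$-pair. You in fact spell out the upper-bound verification that the paper dismisses as easy, but there is no substantive difference in approach.
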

\begin{proof}
 Let $( N , \{f_{i} : N_{i} \rightarrow N : i < \delta \})$ be the direct limit of the sequence such that $M_{\delta} \lea N$ and $f_i\upp_{M_i}=\id_{M_i}$. Let $a := f_{0}(a_0)$ and $p:= \gtp(a /M_{\delta}, N)$. Observe that $\gtp(a /M_{\delta}, N) \in \gS^{na}(M_{\delta})$, if $a \in M_{\delta}$ then there is $i < \delta$ such that $a \in M_i$, then using that $f_i \circ f_{0,i} = f_0$, $f_{0,i}(a_0)= a_i$ and $f_i\upp_{M_i}= \id_{M_i}$ it follows that $a_i \in M_i$, which contradicts the fact that $p_i$ is nonalgebraic. It is easy to show that $\gtp(a /M_{\delta}, N)$ is an upper bound for the sequence of types.
\end{proof}

\begin{lemma}\label{coherent}
Let $\s$ be a w-good $[\lambda, \mu)$-frame. Let $\{ M_i \in \K_{[\lambda, \mu)} : i < \delta \}$ an increasing continuous chain such that $\delta \leq \mu$. If $\{ p_i \in \gS^{bs}(M_i) : i < \delta \}$ is an increasing sequence of types such that $p_i$ does not fork over $M_0$ for every $i < \delta$, then $\{ p_i : i < \delta \}$ is a coherent sequence of types. Moreover, there is $p_{\delta} \in \gS^{na}(M_{\delta})$ extending all the $p_i$. If $\delta < \mu$ then  $p_{\delta}\in \gS^{bs}(M_\delta)$ does not fork over $M_0$. 
\end{lemma}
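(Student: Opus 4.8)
The plan is to build the coherence witnesses $\{(a_i, N_i) : i < \delta\}$ together with the transition maps $\{f_{j,i} : j < i < \delta\}$ by induction on $i$, maintaining throughout that $\gtp(a_i/M_i, N_i) = p_i$ and that conditions (1)--(5) of the definition hold below $i$. Once these are in place the sequence is coherent by definition, and the ``Moreover'' clauses follow by feeding the whole sequence into the previous lemma and the Continuity axiom. Throughout I use that $\K_{[\lambda,\mu)}$ has amalgamation, so $E_{at}=E$ and equality of Galois types is witnessed by a single amalgam.

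\textbf{Base and successor steps.} At $i=0$ I just realize $p_0$ by some $(a_0,M_0,N_0)$. At a successor $i=j+1$ I start from $(a_j,M_j,N_j)$ realizing $p_j$ and realize $p_i$ by some $(c,M_i,R)$. Since the sequence is increasing, $p_i\upharpoonright_{M_j}=p_j$, hence $\gtp(c/M_j,R)=\gtp(a_j/M_j,N_j)$. Equality of Galois types gives an amalgam of $R$ and $N_j$ over $M_j$ identifying $c$ with the image of $a_j$; composing with an isomorphism that restores $M_i$ as a substructure (Invariance) produces $N_i\gea M_i$, an embedding $f_{j,i}:N_j\to N_i$ fixing $M_j$, and $a_i:=f_{j,i}(a_j)$ with $\gtp(a_i/M_i,N_i)=p_i$. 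For $k<j$ I set $f_{k,i}:=f_{j,i}\circ f_{k,j}$, so that (2), (4), (5) are inherited from the inductive hypothesis. Note this step uses no non-forking.

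\textbf{Limit step (the crux).} At a limit $i<\delta$ the data below $i$ already form a directed system, so the direct-limit construction of the previous lemma yields $N_i\gea M_i$, embeddings $f_{j,i}:N_j\to N_i$ fixing $M_j$ with $f_{j,i}(a_j)=a_i:=f_{0,i}(a_0)$, and an upper bound $q:=\gtp(a_i/M_i,N_i)\in\gS^{na}(M_i)$ of $\{p_j:j<i\}$. The obstacle is that a priori $q$ is merely \emph{some} upper bound, whereas condition (3) demands $q=p_i$. This is exactly where the non-forking hypothesis is essential: since $i<\mu$ and each $p_j$ does not fork over $M_0$, Continuity gives $q\in\gS^{bs}(M_i)$ with $q$ not forking over $M_0$; meanwhile $p_i\in\gS^{bs}(M_i)$ does not fork over $M_0$ by assumption, and $q\upharpoonright_{M_0}=p_0=p_i\upharpoonright_{M_0}$. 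Uniqueness then forces $q=p_i$, so $(a_i,N_i)$ and the $f_{j,i}$ are legitimate witnesses at stage $i$. I expect identifying the direct-limit type with the prescribed $p_i$ to be the only real difficulty; the rest is bookkeeping on the maps.

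\textbf{Conclusion.} With all witnesses constructed, $\{p_i:i<\delta\}$ is coherent, and the previous lemma supplies an upper bound $p_\delta\in\gS^{na}(M_\delta)$ extending every $p_i$. If $\delta<\mu$ then $M_\delta\in\K_{[\lambda,\mu)}$, and (assuming $\delta$ limit; the successor case is trivial since then $M_\delta=M_{\delta-1}$ and $p_\delta=p_{\delta-1}$) a final application of Continuity, using that each $p_i$ does not fork over $M_0$, gives $p_\delta\in\gS^{bs}(M_\delta)$ not forking over $M_0$.
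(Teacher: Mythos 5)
Your proof is correct and is essentially the paper's own argument: the paper simply cites \cite[5.2]{extendingframes}, noting that only amalgamation, uniqueness and continuity are needed, and your write-up is precisely that proof --- building the witnesses by induction, with the crux at limits being that the direct-limit type is basic and non-forking over $M_0$ by Continuity and hence equals $p_i$ by Uniqueness. No discrepancies to report.
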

\begin{proof}
The exact same proof of \cite[5.2]{extendingframes} works, since the only properties of good frames that are used are amalgamation, uniqueness and continuity.
\end{proof}

\begin{lemma}\label{model-lambda}
If $\s$ is a w-good $[\lambda, \mu)$-frame without the assumption that $\K_{(\lambda,\mu)}$ has no maximal models, then $\K_{[\lambda, \mu]}$ has no maximal models. 
\end{lemma}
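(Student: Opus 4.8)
The plan is to show, by cases on the size of the model, that every $M \in \K_{[\lambda,\mu]}$ has a proper $\K$-extension. First observe that removing the no maximal models requirement only for $\K_{(\lambda,\mu)}$ still leaves it in force for $\K_\lambda$, so the case $\|M\| = \lambda$ is immediate. Fix then $M$ with $\kappa := \|M\| \in (\lambda,\mu]$ and, using $\LS(\K)\le\lambda$, resolve it as an increasing continuous chain $M = \bigcup_{i<\kappa}M_i$ with $M_i \lea M$, $M_0 \in \K_\lambda$ and $\lambda \le \|M_i\| < \kappa \le \mu$; in particular every $M_i$ lies in $\K_{[\lambda,\mu)}$.

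The first key step is to produce a single basic type to feed into the frame machinery, and this is the only place where weak density (rather than full density) is used. Since $\K_\lambda$ has no maximal models, pick $M_0 \lta M_0^+$ with $M_0^+\in\K_\lambda$; applying weak density to $M_0 \lta M_0^+$ yields $M^* \lta N^*$ in $\K_{[\lambda,\mu)}$ with $M_0 \lea M^*$, an element $a \in |N^*|\setminus|M^*|$, and $p^* := \gtp(a/M^*,N^*) \in \gS^{bs}(M^*)$. Thus $\gS^{bs}(M^*) \ne \emptyset$ and $M_0 \lea M^*$.

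Next I would build, alongside the resolution of $M$, a parallel increasing continuous chain $\langle R_i : i<\kappa\rangle$ in $\K_{[\lambda,\mu)}$ together with an increasing coherent sequence of basic types carried up it. Set $R_0 := M^*$ (so $M_0 \lea R_0$ by the previous step) and $p_0 := p^*$. At successor stages use amalgamation, which is available in $\K_{[\lambda,\mu)}$, to choose $R_{i+1} \gea R_i$ of least possible size with $M_{i+1}\lea R_{i+1}$, amalgamating $R_i$ and $M_{i+1}$ over $M_i$; at limits take unions. By existence of nonforking extension, extend $p_0$ to $p_i \in \gS^{bs}(R_i)$ nonforking over $R_0$, and by uniqueness these restrictions cohere into an increasing sequence. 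Taking amalgams of least size keeps $\|R_i\| = \|M_i\| + \|M^*\| < \mu$ at every $i<\kappa$, so the $R_i$ genuinely stay in $\K_{[\lambda,\mu)}$ and the frame axioms keep applying.

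Finally, set $R_\kappa := \bigcup_{i<\kappa}R_i$. Since $M_i \lea R_i$ for all $i$, we get $M \lea R_\kappa$. The sequence $\langle p_i : i<\kappa\rangle$ is coherent and nonforking over $R_0$ with $\kappa \le \mu$, so Lemma \ref{coherent} provides an upper bound $p_\kappa \in \gS^{na}(R_\kappa)$; writing $p_\kappa = \gtp(b/R_\kappa, S)$ with $b \in |S|\setminus|R_\kappa|$ gives $R_\kappa \lta S$, whence $M \lea R_\kappa \lta S$ yields the desired proper extension $M \lta S$. The main obstacles are the two technical points already flagged: extracting the initial basic type from weak density, which, unlike full density, only furnishes a type over some extension $M^*$ of $M_0$ rather than over a prescribed submodel of $M$; and controlling the sizes so that each $R_i$ remains in $\K_{[\lambda,\mu)}$, which is what lets amalgamation, existence and Lemma \ref{coherent} remain applicable uniformly, including the endpoint case $\kappa = \mu$ where no amalgamation at size $\mu$ is ever invoked.
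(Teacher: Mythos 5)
Your proposal is correct in substance but takes a genuinely different route from the paper's proof. The paper argues by contradiction: assuming $M \in \K_{\kappa}$ is maximal, it uses weak density, $[\lambda,\mu)$-amalgamation \emph{and the maximality of $M$ itself} to pull the weak-density witnesses inside $M$ (the amalgam of $M$ with the witness must equal $M$), thereby getting a basic type over a submodel $R' \lea M$; it then runs the nonforking-extension machinery along a resolution of $M$ starting at $M_0 = R'$, so the limit type produced by Lemma \ref{coherent} sits over $M$ itself and contradicts maximality. You argue directly: your seed type comes from weak density applied to $M_0 \lta M_0^+$ (using only that $\K_\lambda$ has no maximal models), it lives over some $M^* \gea M_0$ with no relation to $M$, and you compensate by dragging a parallel chain $R_i$ up the resolution via amalgamation; both proofs then finish identically through Lemma \ref{coherent}. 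Your route buys something concrete: you only ever amalgamate models lying in $\K_{[\lambda,\mu)}$, so the endpoint $\kappa = \mu$ is covered uniformly, whereas the paper's first step amalgamates the weak-density witness with $M$ itself, which for $\kappa=\mu$ is an amalgamation problem involving a model of size $\mu$ and hence falls outside the stated $[\lambda,\mu)$-amalgamation (it can be recovered by a chain argument, but it is not immediate as written). The paper's route, in exchange, is shorter and keeps every object inside $M$, so no issues about identifying copies of models ever arise.

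One caveat on your successor step: in general you cannot choose $R_{i+1}$ with both $R_i \lea R_{i+1}$ and $M_{i+1} \lea R_{i+1}$ holding literally. Amalgamation only provides $R_{i+1} \gea R_i$ together with an embedding $f\colon M_{i+1} \rightarrow R_{i+1}$ fixing $M_i$, and the renaming that would turn $f$ into an inclusion can clash with keeping $R_i$ pointwise fixed, namely when $f$ sends some element of $|M_{i+1}|\setminus|M_i|$ into $|R_i|$; demanding that this never happens is disjoint amalgamation, which does not follow from amalgamation. The standard repair keeps your architecture intact: carry an increasing continuous system of $\K$-embeddings $f_i \colon M_i \rightarrow R_i$ with $f_0$ the inclusion of $M_0$ into $M^*$, obtain $f = \bigcup_{i<\kappa} f_i \colon M \rightarrow R_\kappa$, and conclude $f[M] \lea R_\kappa \lta S$; since maximality is invariant under isomorphism, $M$ has a proper extension as well. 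With that adjustment your argument goes through; in particular your coherence remark is fine as stated, since any choice of nonforking extensions of $p_0$ to the $R_i$ is automatically increasing by monotonicity and uniqueness.
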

\begin{proof}
We show that for every $\kappa \in [\lambda, \mu]$ $\K_{\kappa}$ has no maximal models. The case when $\kappa = \lambda$ follows directly from the definition of w-good $[\lambda, \mu)$-frame and the assumption. So suppose $\kappa \in (\lambda, \mu]$ and $M \in \K_{\kappa}$ is a maximal model. Let $R \lta S \lea M$ such that $R, S \in \K_{\lambda}$. By weak density, $[\lambda, \mu)$-amalgamation property and using the fact that $M$ is maximal, there are $R' \lta S' \lea M$ both in $\K_{\lambda}$ and $a \in |S| \backslash |R|$ such that $\gtp(a/ R', S') \in \gS^{bs}(R')$.

We build $\{ M_i : i < \kappa \} \subseteq \K_{< \kappa}$ an increasing and continuous resolution of $M$ such that $M_0:= R'$. We build $\{ p_i : i < \kappa \}$ such that:
\begin{enumerate}
\item $p_0 = \gtp(a/ R', S')$.
\item For all $i < \kappa$, $p_i \in \gS^{bs}(M_i)$.
\item For all $i < \kappa$, $p_i$ does not fork over $M_0$.
\item If $j < i$ then $p_j \leq p_i$.
\end{enumerate}

\underline{Enough:} By Lemma \ref{coherent} there is $p \in \gS^{na}(\bigcup_{i < \kappa} M_i)$. Observe that $\bigcup_{i < \kappa} M_i = M$ and since the type is nonalgebraic there is $N \in \K_{\kappa}$ and $a \in |N| \backslash |M|$ such that $p = \gtp(a/M,N)$. Hence $M \lta N$, this contradicts the fact that $M$ is maximal.

\underline{Construction:} The base step is (1) and if $i$ is limit we apply Lemma \ref{coherent}. So the only interesting case is when $i= j+1$. By construction we have $p _j \in \gS^{bs}(M_j)$ that does not fork over $M_0$. Since $M_j \lta M_{j+1}$ and both models are in $\K_{[\lambda,\kappa)}$, by the extension property there is $p_{j+1} \in \gS^{bs}(M_{j+1})$ such that $p_j \leq p_{j+1}$ and $p_{j+1}$ does not fork over $M_j$. Then by transitivity $p_{j+1}$ does not fork over $M_0$ 

\end{proof}

\begin{theorem}\label{model}
If $\s$ is a w-good $[\lambda, \mu)$-frame, then $\K_{\kappa} \neq \emptyset$ for all $\kappa \in [\lambda, \mu^+]$.  
\end{theorem}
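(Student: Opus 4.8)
The plan is to reduce the statement to Lemma~\ref{model-lambda} together with the Löwenheim--Skolem axiom, the union-of-chains axiom, and coherence for the AEC $\K$. Since $\s$ is a w-good $[\lambda,\mu)$-frame it is in particular a frame satisfying the hypotheses of Lemma~\ref{model-lambda}, so that lemma gives that $\K_{[\lambda,\mu]}$ has no maximal models; note this is a genuine strengthening of the no-maximal-models clause of the frame, since it now covers models of size exactly $\mu$. The case $\kappa=\lambda$ needs no work: it is the pre-frame axiom $\K_\lambda\neq\emptyset$. Thus the task is to produce a model of size $\kappa$ for each $\kappa\in(\lambda,\mu^+]$.

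Fix such a $\kappa$ and some $M_0\in\K_\lambda$. First I would build a $\lea$-increasing continuous chain $\seq{M_i : i<\kappa}$ with $M_i\lta M_{i+1}$ for all $i$ and $\|M_i\|=\max(\lambda,|i|)$. At a successor $i+1$ the model $M_i$ has size at most $\mu$ (see below), hence lies in $\K_{[\lambda,\mu]}$, so no maximal models yields $M^{\ast}\in\K$ with $M_i\lta M^{\ast}$; choosing $a\in|M^{\ast}|\setminus|M_i|$ and applying Löwenheim--Skolem inside $M^{\ast}$ produces $M_{i+1}\lea M^{\ast}$ of size $\|M_i\|$ with $|M_i|\cup\{a\}\subseteq|M_{i+1}|$, and coherence gives $M_i\lea M_{i+1}$, which is proper because $a\notin|M_i|$. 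At limit stages I set $M_i=\bigcup_{j<i}M_j$, which is in $\K$ and a $\lea$-upper bound by the union axiom. Finally $M:=\bigcup_{i<\kappa}M_i\in\K$ witnesses $\K_\kappa\neq\emptyset$.

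The one point requiring care is the cardinal bookkeeping, and it is where Lemma~\ref{model-lambda} is essential. Because $\kappa\leq\mu^+$, every index $i<\kappa$ satisfies $|i|\leq\mu$, so $\|M_i\|=\max(\lambda,|i|)\leq\mu$ and $M_i$ indeed stays in $\K_{[\lambda,\mu]}$ throughout the construction; in particular the models of size exactly $\mu$ arising when $i\in[\mu,\mu^+)$ still have proper extensions precisely because Lemma~\ref{model-lambda} closed the interval at $\mu$. A strictly increasing continuous chain of length $\kappa$ whose terms have size at most $\mu$ has union of size $\max(\lambda,|\kappa|)=\kappa$, so $\|M\|=\kappa$ as required. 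I expect the climb from $\K_\mu$ to $\K_{\mu^+}$ to be the only genuinely delicate step: it is exactly the passage that fails for the open-interval version of no maximal models and that the closed interval supplied by Lemma~\ref{model-lambda} is designed to handle.
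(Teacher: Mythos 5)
Your proof is correct and takes essentially the same route as the paper: the paper's own proof of Theorem~\ref{model} is exactly the one-line reduction to $\K_\lambda \neq \emptyset$ together with Lemma~\ref{model-lambda}, leaving implicit the chain construction (successor steps via no maximal models plus L\"{o}wenheim--Skolem and coherence, unions at limits) that you spell out. Your cardinal bookkeeping, including the observation that the closed interval $[\lambda,\mu]$ from Lemma~\ref{model-lambda} is what allows the climb to $\mu^+$, is precisely the content the paper delegates to that lemma.
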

\begin{proof}
It follows from the fact that $\K_{\lambda} \neq \emptyset$ and Lemma \ref{model-lambda}.
\end{proof}

 The following corollary has a long history. First, Shelah proved it for good $\lambda$-frames in \cite[\S II.4.13]{shelahaecbook}, then Jarden and Shelah proved it for $good^{-(St,Lc)} \lambda$-frames\footnote{It is clear that a $good^{-(St,Lc)} \lambda$-frame is stronger than a w-good $\lambda$-frame. It is suspected that symmetry does not follow from the other axioms of a good $\lambda$-frame, so we suspect that  $good^{-(St,Lc)} \lambda$-frames are strictly stronger than w-good $\lambda$-frames. The reason we do not mention $good^{-(St,Lc)} \lambda$-frames until this point is because they are simply a technical tool develop in \cite{jrsh875} to encompass both semi-good frames and almost-good frames.} in \cite[3.1.9]{jrsh875} . Later Vasey proved it for $good^{-(S)} \lambda$-frames in \cite[8.9]{vaseya}. Below we prove it for w-good $\lambda$-frames.

\begin{cor}\label{model-lambda++}
If $\s$ is a w-good $\lambda$-frame, then $\K_{\lambda^+} \neq \emptyset$ and $\K_{\lambda^{++}} \neq \emptyset$. 
\end{cor}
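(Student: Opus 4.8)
The plan is to read this off directly from Theorem \ref{model}, since that theorem is stated for arbitrary intervals $[\lambda,\mu)$. First I would unwind the definitions: under the convention $\K_\lambda = \K_{[\lambda,\lambda^+)}$ fixed in the Notation at the start of Section 2, a w-good $\lambda$-frame is literally the same object as a w-good $[\lambda,\lambda^+)$-frame. Hence one may simply instantiate Theorem \ref{model} with $\mu = \lambda^+$. That theorem then guarantees $\K_\kappa \neq \emptyset$ for every $\kappa \in [\lambda,\mu^+]$, and here $\mu^+ = (\lambda^+)^+ = \lambda^{++}$, so the relevant interval is the \emph{closed} interval $[\lambda,\lambda^{++}]$. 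Since both $\lambda^+$ and $\lambda^{++}$ lie in $[\lambda,\lambda^{++}]$, the two required non-emptiness statements $\K_{\lambda^+}\neq\emptyset$ and $\K_{\lambda^{++}}\neq\emptyset$ drop out immediately.

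I expect there to be no genuine obstacle in the corollary itself; all the real work has already been carried out upstream. The substantive step is Lemma \ref{model-lambda}, which shows that a w-good $[\lambda,\mu)$-frame has no maximal models throughout $\K_{[\lambda,\mu]}$ (built by resolving a hypothetical maximal model of size $\kappa \in (\lambda,\mu]$ into a $\lambda$-sized chain, extending a basic type along it via Lemma \ref{coherent}, and taking a coherent upper bound to produce a proper extension). Combined with the hypothesis $\K_\lambda \neq \emptyset$ from the pre-frame axioms, the absence of maximal models propagates non-emptiness one cardinal at a time up to $\mu^+$, which is exactly the content of Theorem \ref{model}. The corollary is therefore just the special case $\mu=\lambda^+$ of that theorem, and the only thing to verify is the bookkeeping on the interval endpoints described above.

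It is worth recording (as the surrounding discussion already emphasizes) that this is the weakest-hypothesis version of a statement with a long history: the same existence conclusion was obtained by Shelah for good $\lambda$-frames, by Jarden--Shelah for $good^{-(\mathrm{St},\mathrm{Lc})}\lambda$-frames, and by Vasey for $good^{-(S)}\lambda$-frames. The point of the present corollary is precisely that neither stability, symmetry, local character, nor full density is needed: weak density together with existence of nonforking extensions, uniqueness, and continuity already suffice to drive the model-existence argument in Lemma \ref{model-lambda}, and so the passage from a w-good $\lambda$-frame to models in $\lambda^+$ and $\lambda^{++}$ is automatic.
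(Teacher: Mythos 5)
Your proof is correct and is exactly the paper's argument: identify the w-good $\lambda$-frame with a w-good $[\lambda,\lambda^+)$-frame and apply Theorem \ref{model} with $\mu=\lambda^+$, so that $\lambda^+,\lambda^{++}\in[\lambda,\mu^+]$ give the conclusion. The bookkeeping on the closed interval endpoint is the only point of substance in the corollary, and you have handled it correctly.
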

\begin{proof}
Observe that $\s$ is a w-good $[\lambda, \lambda^+)$-frame and use Theorem \ref{model}.
\end{proof}

\subsection{Extending w-good $\lambda$-frames}

Similarly to \cite{extendingframes}\footnote{\cite{extendingframes} uses tameness for 2-types to extend symmetry,  in \cite[6.9]{bova} it was established that tameness for 1-types is sufficient. Observe that in this paper the results of  \cite{extendingframes} are enough since symmetry is not assumed.}, one can show that under the amalgamation property and tameness one can extend a w-good $\lambda$-frame to a w-good $[\lambda, \infty)$-frame. We will only sketch the proof since all the proofs of \cite{extendingframes} work for our weaker setting, except the proof of weak density and of no maximal models.

The following definition is a local version of $\geq \s$ which appears in \cite[\S II.2]{shelahaecbook} for good $\lambda$-frames. 

\begin{defin}\label{def-ext} Let $LS(\K)\leq \lambda < \mu$ where $\lambda$ is an infinite cardinal and $\mu$ is an infinite cardinal or infinity.
Given $\s$ a w-good $\lambda$-frame we define:
\begin{itemize}
\item $K_{\s_{[\lambda, \mu)}}^{3, bs} = \{ (a, M, N) \in K^3_{[\lambda, \mu)} : \text{there is \;} M' \lea M \text{\; in \,} \K_{\lambda} \text{\; such that: if \,} M'' \in \K_{\lambda} \text{\; with \;} M' \lea M'' \lea M \text{\; , then \,} \gtp(a/ M'', N) \text{\, does not fork over \,} M' \}$.
\item $\gS_{\s_{[\lambda, \mu)}}^{bs} = \{ p \in \gS(M) : \text{\, for some/every \,} (a, M, N) \in K_{\s_{[\lambda, \mu)}}^{3, bs} (p = \gtp(a/M, N) \}$.
\item Given $M_0 \lea M_1 \lea N$ all in $\K_{[\lambda, \mu)}$ and $a \in |N| \backslash |M_1|$: $ a \dnf_{M_{0}}^{N} M_1$ if and only if there is $R_0 \lea M_0$ in $\K_{\lambda}$ such that for any $R_1, S \in \K_{\lambda}$ with $R_0 \lea R_1 \lea M_1$ and $R_1 \cup \{a \} \subseteq S \leq N $ it holds that $a \dnf_{R_{0}}^{S} R_1$.
\end{itemize}
Define $\s_{[\lambda, \mu)} = (\K, \gS_{\s_{[\lambda, \mu)}}^{bs}, \dnf_{\s_{[\lambda, \mu)}})$.
\end{defin}

The following is already proven for good $\lambda$-frames in \cite{extendingframes}.

\begin{lemma}\label{will-ext}
Assume $\K$ is an AEC with the $[\lambda, \mu^+)$-amalgamation property. If $\s$ is a w-good $\lambda$-frame and $\K$ is $(\lambda, \mu)$-tame then $\s_{[\lambda, \mu^+)}$ is a pre-$[\lambda, \mu^+)$-frame that satisfies the amalgamation property, the joint embedding property, existence of non-forking extension, uniqueness and continuity.
\end{lemma}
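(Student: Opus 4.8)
The plan is to establish each of the six listed properties by transporting, essentially verbatim, the arguments used for good $\lambda$-frames in \cite{extendingframes}, while checking at every step that the argument appeals only to the axioms a w-good $\lambda$-frame retains---the pre-frame axioms, amalgamation, joint embedding, existence of non-forking extensions, uniqueness, and continuity of $\s$ itself---together with $(\lambda, \mu)$-tameness and the $[\lambda, \mu^+)$-amalgamation property of $\K$. The point to verify throughout is that bs-stability, density, symmetry, and local character are never invoked in deriving these particular conclusions; they are needed only for weak density and no maximal models, which is exactly why those two items are handled separately. I would carry out the verification one property at a time, reading off the relevant clause of Definition \ref{def-ext}.

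For the pre-frame axioms, invariance is immediate because an isomorphism of the ambient models carries the witnessing $\lambda$-sized submodels and the local non-forking statements of Definition \ref{def-ext} to their images, and invariance holds in $\s$. Monotonicity and ``non-forking types are basic'' follow by unpacking Definition \ref{def-ext} and applying monotonicity in $\s$ to shrink or enlarge the witnesses $R_0, R_1$; in particular, if $a \dnf_M^N M$ then its witness $R_0 \lea M$ exhibits $\gtp(a / R_0, N)$ as a basic type of the $\lambda$-frame, so $\gtp(a/M, N)$ is basic. Amalgamation is the standing hypothesis on $\K$, and joint embedding follows by embedding chosen $\lambda$-sized submodels of two models of $\K_{[\lambda, \mu^+)}$ into a common model using joint embedding in $\K_\lambda$ and then amalgamating twice to produce a common extension of bounded size. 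Existence of non-forking extensions is obtained by resolving $N$ into an increasing continuous chain of $\lambda$-sized models above the witness for $p$, applying existence in $\s$ at successor stages, and passing to the coherent limit supplied by Lemma \ref{coherent}; continuity is reduced in the same manner to continuity in $\s$ together with Lemma \ref{coherent}.

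The main obstacle, and the one place where tameness is genuinely indispensable, is uniqueness. Suppose $p, q \in \gS(N)$ both do not fork over $M$ in the sense of $\dnf_{\s_{[\lambda, \mu^+)}}$ and $p \upp_M = q \upp_M$; assuming $p \neq q$, I would use $(\lambda, \mu)$-tameness to produce $R \lea N$ with $\|R\| = \lambda$ and $p \upp_R \neq q \upp_R$. Fixing $\lambda$-sized witnesses $R_0^p, R_0^q \lea M$ for the non-forking of $p$ and of $q$, amalgamating them into a single $\lambda$-sized $M_0 \lea M$, and enlarging $R$ (keeping $\|R\| = \lambda$) so that $M_0 \lea R \lea N$ while still $p \upp_R \neq q \upp_R$, I would use monotonicity in $\s$ and the local definition of $\dnf_{\s_{[\lambda, \mu^+)}}$ to conclude that both $p \upp_R$ and $q \upp_R$ fail to fork over $M_0$ and that they agree on $M_0$. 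Uniqueness of the $\lambda$-frame applied over $M_0$ then forces $p \upp_R = q \upp_R$, contradicting the choice of $R$. The delicate point is coordinating a single $\lambda$-sized base $M_0$ over which both types simultaneously do not fork while preserving their agreement; this is precisely the feature the definition of $\dnf_{\s_{[\lambda, \mu^+)}}$ is designed to deliver, and it is where tameness replaces the set-theoretic machinery of the original argument.
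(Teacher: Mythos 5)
Your overall plan is exactly the paper's proof: the paper disposes of this lemma by citing \cite[3.2]{extendingframes} for uniqueness, \cite[5.3]{extendingframes} for existence, and \cite[\S II.2.11(6)]{shelahaecbook} for continuity, the whole justification being the observation you make at the outset, namely that those arguments never invoke bs-stability, density, symmetry, or local character. Your treatment of the pre-frame axioms and joint embedding is fine, and your uniqueness argument --- tameness produces a $\lambda$-sized $R \lea N$ on which $p$ and $q$ differ, a single witness $M_0 \lea M$ valid for both types is extracted using the upward monotonicity of witnesses built into Definition \ref{def-ext}, and uniqueness in $\s$ over $M_0$ then gives a contradiction --- is precisely Boney's proof.

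The one step that fails as literally written is the existence sketch. You propose ``resolving $N$ into an increasing continuous chain of $\lambda$-sized models'': no such chain exists once $\|N\| \geq \lambda^{++}$, because continuity forces the models to grow past size $\lambda$ at limit stages of cofinality $\lambda^+$ (an increasing continuous chain of $\lambda$-sized models can only union up to a model of size $\lambda^+$). For the same reason, ``applying existence in $\s$ at successor stages'' is unavailable in general, since the models appearing at successor stages need not lie in $\K_\lambda$. This matters because the lemma is stated for arbitrary $\mu$, and the paper invokes it beyond $\mu = \lambda^+$ (e.g., to extend a w-good $\lambda$-frame to a w-good $[\lambda,\infty)$-frame). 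The repair is the actual structure of the argument in \cite{extendingframes}: induct on $\|N\|$, resolve $N$ above $M$ into a continuous chain of models of intermediate sizes, apply the induction hypothesis (existence for the extended frame at smaller cardinalities) at successor stages, and pass to the limit via Lemma \ref{coherent}, which is stated for chains in $\K_{[\lambda,\mu)}$ of arbitrary sizes and whose proof uses only amalgamation, uniqueness and continuity. In the paper's main application (Theorem \ref{main}) one has $\mu = \lambda^+$, so every model in sight has size at most $\lambda^+$ and your sketch does suffice there; but for the lemma as stated, the induction on cardinality is needed.
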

\begin{proof}
It is trivial to show that $\s_{[\lambda, \mu^+)}$ is a pre-$[\lambda, \mu^+)$-frame. We have the amalgamation property by hypothesis and the joint embedding property follows from the amalgamation property and the fact that we have the joint embedding property in $\K_{\lambda}$. The existence of non-forking extension is \cite[5.3]{extendingframes}, the uniqueness property is \cite[3.2]{extendingframes} and continuity is \cite[\S II 2.11(6)]{shelahaecbook}. 
\end{proof}

Therefore we only need to prove that weak density and no maximal models transfer up.

\begin{lemma}\label{b-wd}
Assume $\K$ is an AEC with the $[\lambda, \mu^+)$-amalgamation property. If $\s$ is a pre-$\lambda$-frame that has weak density then $\s_{[\lambda, \mu^+)}$ has weak density.
\end{lemma}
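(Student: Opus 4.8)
The plan is to reduce the input to a configuration of $\lambda$-sized models where the weak density of $\s$ applies, extract a basic $\s$-type there, and then transport it back over the (possibly large) model $N$ by a single use of the $[\lambda,\mu^+)$-amalgamation property. The point that makes this manageable is that the base model of weak density is always of size $\lambda$, so I can keep the witnessing domain small and never have to compute forking over a large model directly.

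First I would take $M \in \K_\lambda$ and $M \lta N$ with $N \in \K_{[\lambda,\mu^+)}$, pick $b \in |N| \setminus |M|$, and use the Löwenheim--Skolem axiom (recall $\lambda \geq \LS(\K)$) to find $N_0 \lea N$ with $M \cup \{b\} \subseteq |N_0|$ and $\|N_0\| = \lambda$. By coherence $M \lea N_0$, and $b$ witnesses $M \neq N_0$, so $M \lta N_0$ with both in $\K_\lambda$. Applying the weak density of $\s$ to $M \lta N_0$ yields $a \in |N_0| \setminus |M|$ and $M^* \lta N^*$ in $\K_\lambda$ with $a \in |N^*| \setminus |M^*|$, $\gtp(a/M^*, N^*) \in \gS^{bs}(M^*)$, and $(a, M, N_0) \le (a, M^*, N^*)$; in particular $M \lea M^*$ and $N_0 \lea N^*$.

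Next I would glue $N^*$ back onto $N$. Since $N_0 \lea N$ and $N_0 \lea N^*$ with $N, N^* \in \K_{[\lambda,\mu^+)}$, the $[\lambda,\mu^+)$-amalgamation property (applied with base $N_0$) gives $N' \in \K_{[\lambda,\mu^+)}$ together with a $\K$-embedding $g : N^* \to N'$ fixing $N_0$ and with $N \lea N'$, so $g[N^*] \lea N'$. Because $a \in |N_0|$ we have $g(a) = a$, and because $M \lea N_0$ we have $g \upp M = \id_M$, so setting $M' := g[M^*]$ gives $M \lea M'$, $N \lea N'$, $M' \lta N'$, and $a \in |N'| \setminus |M'|$ (using that $a \notin |M^*|$ and $g$ is injective). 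Hence $(a, M, N) \le (a, M', N')$ and the membership conditions of weak density hold; moreover $M' \in \K_\lambda$, being isomorphic to $M^*$.

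The remaining, and main, point is to verify $\gtp(a/M', N') \in \gS^{bs}_{\s_{[\lambda,\mu^+)}}(M')$. By invariance of Galois types under $g$ (with $g(a)=a$), $\gtp(a/M', g[N^*]) \in \gS^{bs}(M')$, and since $g[N^*] \lea N'$, monotonicity of Galois types identifies this with $\gtp(a/M', N')$; thus $\gtp(a/M', N')$ agrees with a basic $\s$-type over the $\lambda$-sized model $M'$. To finish I would observe that over a model of size $\lambda$ the basic types of $\s_{[\lambda,\mu^+)}$ coincide with those of $\s$: taking $R = M'$ as the witness in the definition of $K^{3, bs}_{\s_{[\lambda,\mu^+)}}$, membership reduces to $a \dnf_{M'}^{N'} M'$ in the extended frame, which for the small representative $g[N^*]$ is precisely $a \dnf_{M'}^{g[N^*]} M'$ in $\s$. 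This is the delicate step: I must use that a basic type of $\s$ does not fork over its own domain (i.e. $\gS^{bs}(M')$ consists of types nonforking over $M'$). This is available in the intended applications, where $\s$ is w-good and therefore has existence of nonforking extension — applying that axiom with $N = M'$ shows any $p \in \gS^{bs}(M')$ does not fork over $M'$. Granting this, $\gtp(a/M', N')$ is basic in $\s_{[\lambda,\mu^+)}$, which completes the verification and hence establishes weak density for $\s_{[\lambda,\mu^+)}$.
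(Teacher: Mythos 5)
Your proof is correct and follows essentially the same route as the paper's: apply L\"{o}wenheim--Skolem to get $N_0 \in \K_\lambda$ with $M \lta N_0 \lea N$, apply weak density of $\s$ to that pair, then amalgamate the resulting small configuration with $N$ over $N_0$ and transport everything into the amalgam. The differences up to that point are purely notational (the paper writes $f$ and $f[M']$ where you write $g$ and $g[M^*]$).

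What deserves comment is the step you call delicate: it is genuinely delicate, and the paper's own proof passes over it in silence, ending with the bare assertion that $\gtp(a/ f[M'], N') \in \gS_{\s_{[\lambda, \mu^+)}}^{bs}(f[M'])$. By Definition \ref{def-ext}, that membership means the type does not fork (in the sense of $\s$) over some small submodel, and since $f[M']$ itself is small the only candidate witness is $f[M']$; so one needs precisely what you isolated, namely that a basic $\s$-type does not fork over its own domain. A pre-$\lambda$-frame only postulates the implication in the other direction (``nonforking types are basic''), so this is not available from the stated hypotheses: taking $\dnf$ to be the empty relation and $\gS^{bs} = \gS^{na}$ gives a pre-$\lambda$-frame with weak density for which $\gS_{\s_{[\lambda,\mu^+)}}^{bs}$ is empty, so the lemma as literally stated can fail for bare pre-frames (whenever $\K_{[\lambda,\mu^+)}$ has a proper extension of a model in $\K_\lambda$). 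Your repair --- deriving the needed property from Existence applied with $N = M'$ --- is exactly right, and it is harmless for the paper, since the lemma is only ever invoked (in Lemma \ref{nmm} and Theorem \ref{ext-ext}) when $\s$ is a w-good $\lambda$-frame, where Existence holds. In short, your write-up is, if anything, more complete than the paper's; the honest statement of the lemma should either add ``basic types of $\s$ do not fork over their own domain'' as a hypothesis or assume $\s$ satisfies Existence.
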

\begin{proof}
Let $M \lta N$ such that $M \in \K_{\lambda}$. If $N \in \K_{\lambda}$ then it follows directly from the fact that $\s$ satisfies the weak density property. So let us do the case when $\| N \| > \lambda$.

Apply LST axiom to get $N_0 \in \K_{\lambda}$ such that $M \lta N_0 \lea N$. By weak density in $\s$ there are $a \in |N_0| \backslash |M|$ and $M' \lta N'_0$ both in $\K_{\lambda}$ such that $a \in |N'_0| \backslash |M'|$, $\gtp(a / M', N'_0) \in \gS^{bs}(M')$ and $(a, M, N_0) \leq (a, M', N'_0)$.  By the amalgamation property there are $f$ and $N' \in \K_{[\lambda, \mu]}$ such that the following diagram commutes:
\[
 \xymatrix{\ar @{} [dr] N_0'  \ar[r]^{f}  & N' \\
N_{0} \ar[u]^{\id} \ar[r]^{\id} & N \ar[u]_{\id}
 }
\]

Observe that $f(a)=a$, $a \in |N'| \backslash |f[M']|$, $(a, M, N) \leq (a, f[M'], N')$ and $\gtp(a/ f[M'], N') \in \gS_{\s_{[\lambda, \mu^+)}}^{bs}(f[M'])$.
\end{proof}

 The reason we can not simply quote \cite[7.1]{extendingframes} to transfer up no maximal models is because Boney's proof uses symmetry, which we are not assuming. 

\begin{lemma}\label{nmm}
Assume $\K$ is an AEC with the $[\lambda, \mu^+)$-amalgamation property. If $\s$ is a w-good $\lambda$-frame and $\K$ is $(\lambda, \mu)$-tame then $\s_{[\lambda, \mu^+)}$ has no maximal models.
\end{lemma}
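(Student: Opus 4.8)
The plan is to reduce this to Lemma \ref{model-lambda}, which already establishes no maximal models for any w-good $[\lambda, \mu)$-frame satisfying all the w-good axioms except possibly no maximal models in the open interval $(\lambda, \mu)$, provided $\K_\lambda$ itself has no maximal models. So the strategy is not to re-run the resolution argument from scratch, but to verify that $\s_{[\lambda, \mu^+)}$ meets the hypotheses of Lemma \ref{model-lambda} on the interval $[\lambda, \mu^+)$ and then quote it.

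First I would collect what is already known about $\s_{[\lambda, \mu^+)}$. By Lemma \ref{will-ext} it is a pre-$[\lambda, \mu^+)$-frame possessing amalgamation, joint embedding, existence of non-forking extension, uniqueness and continuity; the amalgamation comes directly from the $[\lambda, \mu^+)$-amalgamation hypothesis on $\K$. By Lemma \ref{b-wd}, applied to $\s$ (which has weak density, being a w-good $\lambda$-frame), the extended frame has weak density. Finally, transitivity of non-forking for $\s_{[\lambda, \mu^+)}$ holds as in the base frame, being derivable from the other axioms as noted after the definition of a w-good frame. Together these are precisely the axioms of a w-good $[\lambda, \mu^+)$-frame with the sole exception of no maximal models.

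Second, I would supply the one ingredient that Lemma \ref{model-lambda} takes as a separate assumption: that $\K_\lambda$ has no maximal models. This is immediate, since $\s$ is a w-good $\lambda$-frame and the no maximal models clause of that definition is exactly the statement that $\K_{[\lambda,\lambda^+)} = \K_\lambda$ has no maximal models; the AEC $\K$ underlying $\s_{[\lambda, \mu^+)}$ is the same $\K$, so $\K_\lambda$ is unchanged.

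With these verifications in hand I would apply Lemma \ref{model-lambda} to the frame $\s_{[\lambda, \mu^+)}$ on the interval $[\lambda, \mu^+)$; its conclusion is that $\K_{[\lambda, \mu^+]}$ has no maximal models, which in particular yields that $\K_{[\lambda, \mu^+)}$ has no maximal models, i.e.\ that $\s_{[\lambda, \mu^+)}$ has no maximal models, as required. The one point deserving care, and the likeliest source of a gap, is the bookkeeping on the hypotheses of Lemma \ref{model-lambda}: its proof invokes weak density, $[\lambda,\mu^+)$-amalgamation, the extension property, transitivity, and the coherence machinery of Lemma \ref{coherent}, so I would confirm that each of these has genuinely been transferred to $\s_{[\lambda, \mu^+)}$ by Lemmas \ref{will-ext} and \ref{b-wd} before invoking the conclusion. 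No new forking computation should be needed.
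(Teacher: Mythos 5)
Your proposal is correct and follows essentially the same route as the paper's own proof: combine Lemma \ref{will-ext} and Lemma \ref{b-wd} to see that $\s_{[\lambda, \mu^+)}$ is a w-good $[\lambda, \mu^+)$-frame except possibly for no maximal models, observe that $\K_\lambda$ has no maximal models since $\s$ is a w-good $\lambda$-frame, and then invoke Lemma \ref{model-lambda}. Your extra care about the hypothesis bookkeeping (transitivity, coherence, weak density) is exactly the content that makes the reduction legitimate, and nothing further is needed.
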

\begin{proof}
By Lemma \ref{will-ext} and Lemma \ref{b-wd} $\s_{[\lambda, \mu^+)}$ is a w-good $[\lambda, \mu^+)$-frame without the property that $\K_{[\lambda, \mu^+)}$ has no maximal models. Since $\s$ is a w-good $\lambda$-frame, $\K_\lambda$ has no maximal models. Therefore, by Lemma \ref{model-lambda} it follows that $\K_{[\lambda, \mu^+)}$ has no maximal models.
\end{proof}

With all the work we have done we can conclude the theorem promised at the beginning of the section.

\begin{theorem}\label{ext-ext}
Assume $\K$ is an AEC with the $[\lambda, \mu^+)$-amalgamation property. If $\s$ is a w-good $\lambda$-frame and $\K$ is $(\lambda, \mu)$-tame, then $\s_{[\lambda, \mu^+)}$  is a w-good $[\lambda, \mu^+)$-frame.
\end{theorem}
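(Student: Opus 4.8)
The plan is to observe that Theorem \ref{ext-ext} is essentially a bookkeeping assembly of the three preceding lemmas, since each defining property of a w-good $[\lambda, \mu^+)$-frame is verified by exactly one of them. First I would recall what must be checked for $\s_{[\lambda, \mu^+)}$: that it is a pre-$[\lambda, \mu^+)$-frame, that $\K_{[\lambda, \mu^+)}$ has amalgamation, joint embedding and no maximal models, and that the frame satisfies weak density, existence of non-forking extension, uniqueness, and continuity. The strategy is to dispatch these in three batches, each matching one lemma.

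The first and largest batch comes directly from Lemma \ref{will-ext}: under the $[\lambda, \mu^+)$-amalgamation hypothesis, $(\lambda, \mu)$-tameness, and the assumption that $\s$ is w-good, it already gives that $\s_{[\lambda, \mu^+)}$ is a pre-$[\lambda, \mu^+)$-frame satisfying amalgamation, joint embedding, existence of non-forking extension, uniqueness, and continuity. This settles every axiom except weak density and no maximal models. The second batch is weak density, supplied by Lemma \ref{b-wd}, whose hypotheses are met since we have $[\lambda, \mu^+)$-amalgamation and $\s$, being w-good, already satisfies weak density at $\lambda$. The third batch is no maximal models, supplied by Lemma \ref{nmm} under precisely the standing hypotheses of the theorem.

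Therefore the proof is simply to invoke these three lemmas in turn and observe that together they verify all the defining properties, so $\s_{[\lambda, \mu^+)}$ is a w-good $[\lambda, \mu^+)$-frame. I do not expect a genuine obstacle at this stage, since all the real work was isolated into the lemmas; the only point worth a moment's care is confirming there is no circularity. Lemma \ref{nmm} internally reuses Lemma \ref{will-ext} and Lemma \ref{b-wd} to produce a w-good $[\lambda, \mu^+)$-frame lacking only the no-maximal-models clause, and then applies Lemma \ref{model-lambda}; but Lemma \ref{model-lambda} needs only weak density, amalgamation on the interval, existence, uniqueness, continuity, and that $\K_{\lambda}$ itself has no maximal models (which holds because $\s$ is w-good at $\lambda$). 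Since none of these depend on the conclusion we are proving, the loop closes cleanly and the verification is complete.
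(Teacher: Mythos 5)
Your proposal is correct and is essentially identical to the paper's own proof, which likewise concludes Theorem \ref{ext-ext} by simply combining Lemma \ref{will-ext}, Lemma \ref{b-wd} and Lemma \ref{nmm}. Your additional check that the appeal to Lemma \ref{model-lambda} inside Lemma \ref{nmm} introduces no circularity is a sound (if implicit in the paper) observation.
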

\begin{proof}
Follows from Lemma \ref{will-ext}, Lemma \ref{b-wd} and Lemma \ref{nmm}.
\end{proof}

In \cite[4.16]{vaseyd} Vasey weakens the hypothesis of the above theorem for good frames from $\K$ has the amalgamation property for that of $\K$ has the weak amalgamation. In the proof, it is crucial the density of basic types, therefore we do not know if one can weaken the hypothesis in the above theorem.

\section{Applications}

The following notation will be very useful in this section:

\begin{nota}
We denote by $( \ast )_{\lambda}$ the assertion ``$\Ii(\K,\lambda)= \Ii(\K,\lambda^+)=1\leq \Ii(\K,\lambda^{++}) < 2 ^{\lambda^{++}}$".
\end{nota}

In this section we will show how w-good frames can be used to prove the following:

\begin{theorem}\label{main}\footnote{As mentioned in the introduction, Shelah claims the same conclusion from fewer assumptions (see Fact \ref{sh1} and the two paragraphs above  it). 
}
Suppose $2^{\lambda}< 2^{\lambda^{+}} < 2^{\lambda^{++}}$ and $2^{\lambda^{+}} > \lambda^{++}$. If $( \ast )_{\lambda}$ and $\K$ is $(\lambda, \lambda^+)$-tame then $\K_{\lambda^{+++}} \neq \emptyset$.
\end{theorem}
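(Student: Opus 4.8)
The plan is to manufacture a w-good $[\lambda,\lambda^{++})$-frame on $\K$ and then quote the model-existence theorem. In order of execution I would: (i) build a w-good $\lambda$-frame $\s$ on $\K$; (ii) extend it by Theorem \ref{ext-ext}, applied with $\mu=\lambda^+$, to a w-good $[\lambda,\lambda^{++})$-frame $\s_{[\lambda,\lambda^{++})}$; and (iii) feed this into Theorem \ref{model}, applied with $\mu=\lambda^{++}$, which gives $\K_\kappa\neq\emptyset$ for every $\kappa\in[\lambda,\lambda^{+++}]$, in particular $\K_{\lambda^{+++}}\neq\emptyset$. The arithmetic is forced: Theorem \ref{model} raises the interval by one successor, so to reach $\lambda^{+++}$ one needs the frame to live on $[\lambda,\lambda^{++})$, and Theorem \ref{ext-ext} with $\mu=\lambda^+$ produces exactly such a frame from a $\lambda$-frame.

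Before any of this I would secure the structural hypotheses. Since $2^\lambda<2^{\lambda^+}$, $\Ii(\K,\lambda)=1$ and $\Ii(\K,\lambda^+)=1<2^{\lambda^+}$, Fact \ref{AP} gives amalgamation in $\K_\lambda$; running Fact \ref{AP} one cardinal higher, now with $2^{\lambda^+}<2^{\lambda^{++}}$, $\Ii(\K,\lambda^+)=1$ and $1\le\Ii(\K,\lambda^{++})<2^{\lambda^{++}}$, gives amalgamation in $\K_{\lambda^+}$. Via standard transfer (using $(\lambda,\lambda^+)$-tameness if needed) these combine to the $[\lambda,\lambda^{++})$-amalgamation property demanded by Theorem \ref{ext-ext}. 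Joint embedding in $\lambda$ is immediate from $\Ii(\K,\lambda)=1$, and no maximal models in $\lambda$ — the only such requirement in the definition of a w-good $\lambda$-frame — follows from $\K_{\lambda^+}\neq\emptyset$ together with categoricity in $\lambda$.

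The substantive step, and the one I expect to be the main obstacle, is (i): constructing the w-good $\lambda$-frame following the ideas of \cite{sh576}. I would let $\gS^{bs}$ consist of the basic (reduced/minimal) types and take $\dnf$ to be the associated nonforking relation. Here the pre-type apparatus of Section 2.2 does the work for \emph{weak} density: by Fact \ref{reduce} reduced pre-types are dense in $K^3_\lambda$, and from a reduced pre-type sitting above a given $M\lta N$ one extracts an $a$ and a basic type witnessing weak density. The delicate axiom is uniqueness in $\K_\lambda$; this is where the weak diamond coming from $2^\lambda<2^{\lambda^+}$ enters, in tandem with $\Ii(\K,\lambda^+)=1$, since a failure of uniqueness would allow one to construct non-isomorphic models of size $\lambda^+$, violating $\Ii(\K,\lambda^+)=1$. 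Existence of nonforking extensions and continuity are comparatively routine.

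The decisive economy over \cite{shelahaecbook} is that the passage from $\lambda$ up to $\lambda^+$ and $\lambda^{++}$ is performed by tameness, through Theorem \ref{ext-ext}, rather than by the heavy counting machinery at $\lambda^{++}$. The role of the extra hypothesis $2^{\lambda^+}>\lambda^{++}$ is precisely to collapse Shelah's bound $\mu_{unif}(\lambda^{++},2^{\lambda^+})$ down to $2^{\lambda^{++}}$, so that the clean counting assumption $\Ii(\K,\lambda^{++})<2^{\lambda^{++}}$ is exactly what Fact \ref{AP} needs at the level of $\lambda^+$. Once $\s$ is built, steps (ii) and (iii) are mechanical applications of Theorem \ref{ext-ext} and Theorem \ref{model}, and the proof closes.
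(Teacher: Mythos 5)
Your outer skeleton --- build a w-good $\lambda$-frame, extend it by Theorem \ref{ext-ext} with $\mu=\lambda^+$ to a w-good $[\lambda,\lambda^{++})$-frame, then apply Theorem \ref{model} --- is exactly the paper's, but there is a genuine gap in your step (i), and it is structural: the paper does not, and with its methods cannot, build the frame outright from the hypotheses of Theorem \ref{main}. The paper's proof is by contradiction. One assumes $\K_{\lambda^{+++}}=\emptyset$; then any model in $\K_{\lambda^{++}}$ extends to a \emph{maximal} one, which by Lemma \ref{monster} is universal above $\lambda$. That universal model $\ce\in\K_{\lambda^{++}}$ is the engine of the whole frame construction: it is what makes density of minimal pre-types (Lemma \ref{minimal}) and the extension property for minimal types (Theorem \ref{ext}) provable, and it exists \emph{only} because of the contradiction hypothesis. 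A direct construction of a w-good $\lambda$-frame from $(\ast)_\lambda$ and weak diamond, which is what you propose, is essentially the content of the machinery of \cite[\S VII]{shelahaecbook} that the paper is explicitly designed to avoid. The correct closing move is: under the contradiction hypothesis, Theorem \ref{ext-frame++} gives the frame $\s_{min}$, tameness and Remark \ref{AP2} let Theorem \ref{ext-ext} extend it, and Theorem \ref{model} then produces a model in $\K_{\lambda^{+++}}$, contradicting the assumption.

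Your sketch of the frame axioms also inverts the difficulties. The basic types of $\s_{min}$ are the \emph{minimal} types (Definition \ref{min-frame}), not the reduced ones; Fact \ref{reduce} (density of reduced pre-types) is an easy general fact and does not yield weak density --- what is needed is density of \emph{minimal} pre-types, Lemma \ref{minimal}, which is the one place where $2^{\lambda^+}>\lambda^{++}$ enters (a pigeonhole argument: $2^{\lambda^+}$ branches of a tree of pre-types are mapped into $\ce$, which has size $\lambda^{++}$, so two branches collide). Your account of $2^{\lambda^+}>\lambda^{++}$ as ``collapsing $\mu_{unif}$ so that Fact \ref{AP} applies'' is not what happens: Fact \ref{AP} and Remark \ref{AP2} need only $2^\lambda<2^{\lambda^+}<2^{\lambda^{++}}$ together with $(\ast)_\lambda$. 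Conversely, uniqueness --- which you single out as the delicate axiom requiring weak diamond --- is immediate from the definition of minimality (Lemma \ref{pre-frame2}), whereas existence of nonforking extensions, which you call comparatively routine, is the hardest part of the paper: it is Theorem \ref{ext}, proved through Lemmas \ref{ext-pp}, \ref{int1} and \ref{v-pair}, and it is there (via Lemma \ref{diamond}) that the weak diamond consequences of $2^\lambda<2^{\lambda^+}$ and $\lambda^+$-categoricity are actually used.
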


The proof presented here follows the blueprint displayed in \cite{sh576}, unless otherwise noted all the definitions in this section were introduced by Shelah in \cite{sh576}. We would like to point out that most of what we prove here is already proved by Shelah in \cite{sh576}, but we decided to write down the proofs since some of Shelah's proofs are obscure, in particular those of Section 4.3, and they are central in the study of AECs.

 The proof of Theorem \ref{main} is done by contradiction. We will assume that $\K_{\lambda^{+++}} = \emptyset$ and using this property we will construct an explicit w-good $\lambda$-frame. Then using tameness together with Theorem \ref{ext-ext} we will get a contradiction by building a model of size $\lambda^{+++}$.

\subsection{Definition and basic properties} The next definition is crucial.

\begin{defin}
$(a, M_0, N_0) \in K_{\lambda}^{3}$ is \emph{minimal} when: if $(a, M_0, N_0) \leq_{h_l} (a_l, M_1, N_1^{l})$ for $l \in \{ 1, 2\}$ and $h_1\upharpoonright_{M_0}= h_2\upharpoonright_{M_0}$ then $\gtp(a_1/ M_1, N_1^1)=\gtp(a_2/ M_1, N_1^2)$.

A type $p \in \gS(M)$ is \emph{minimal} for $M \in \K_{\lambda}$, if for some $a$ and $N\in \K_\lambda$ we have that $(a, M, N) \in K_{\lambda}^{3}$ is minimal and $p=\gtp(a/M, N)$.
\end{defin}

With this definition we are ready to introduce our candidate for the w-good $\lambda$-frame. This frame was introduced in \cite[\S VI.8.3]{shelahaecbook}.\footnote{In  \cite[\S VI.8.3]{shelahaecbook} Shelah shows, under the hypothesis of Fact \ref{sh1}, that $\s_{min}$ is an almost good $\lambda$-frame. The reason we only show that $\s_{min}$ is a w-good $\lambda$-frame is because by Section 3 this is enough to get a model of size $\lambda^{+++}$ and because the known proofs of the other properties use the machinery of \cite[\S VII]{shelahaecbook} which we avoid.}

\begin{defin}\label{min-frame}
We define  $\s_{min}=(\K_{min}, \dnf_{min}, \gS^{bs}_{min})$ as follows:
\begin{itemize}
\item $\K_{min}= \K_{\lambda}$.
\item $\gS^{bs}_{min}=\{ \gtp(a/M, N) : (a, M, N) \in K_{\lambda}^{3} \text{\; minimal} \}$.
\item Given $M_{0} \lea M_{1} \lea N$ and $a\in |N| \backslash |M_{1}|$ we define
$a \dnf_{M_0}^{N} M_1$ if and only if $\gtp(a/M_1, N)\upharpoonright_{M_0}$ is minimal. 
\end{itemize}
\end{defin}

An easy consequence of  Fact \ref{AP} is the following.

\begin{remark}\label{AP2}
Suppose $2^{\lambda}< 2 ^{\lambda^{+}} < 2 ^{\lambda^{++}}$. Let $\K$ be an AEC.  If $(\ast)_\lambda$ then $\K_{\{\lambda, \lambda^{+}\}}$ has the amalgamation property. 
\end{remark}

Therefore the theorems on this section that assume that $\K$ has $\lambda$ or $\lambda^+$ amalgamation follow from the hypothesis of Theorem \ref{main}.

\begin{defin}\
\begin{enumerate}
\item $(a_0, M_0, N_0) \in K_{\lambda}^{3}$ has the \emph{weak extension property} if there is $(a_1, M_1, N_1) \in K_{\lambda}^{3}$ such that $(a_0, M_0, N_0) < (a_1, M_1, N_1) $.
\item $K_{\lambda}^{3}$ has \emph{no maximal pre-type} if every $(a_0, M_0, N_0) \in K_{\lambda}^{3}$ has the weak extension property.
\end{enumerate}
\end{defin}

As one can see from the definition of minimal pre-type, a pre-type can be minimal if there is no pre-type above it, but we will show that under the hypothesis of Theorem \ref{main} this can not happen.
This appears first as \cite[2.4]{sh576}, but a more straightforward proof is given in \cite[7.11]{grossberg2002} (in \cite{grossberg2002} it is assumed that the class is a PC class, but the hypothesis is not necessary).

\begin{fact}\label{wep}
Let $\K$ be an AEC. If $\Ii(\K, \lambda)= \Ii(\K, \lambda^+)=1$ and $\K_{\lambda^{++}}\neq \emptyset$ then  $K_{\lambda}^{3}$ has no maximal pre-type.
\end{fact}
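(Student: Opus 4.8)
The plan is to prove the contrapositive formulation directly: show that every $(a, M_0, N_0) \in K_\lambda^3$ has the weak extension property, i.e. that one can find $M_1, N_1 \in \K_\lambda$ with $M_0 \lta M_1 \lea N_1$, $N_0 \lea N_1$ and $a \in |N_1| \setminus |M_1|$. I would argue by contradiction, assuming $(a, M_0, N_0)$ is maximal. First I would record two consequences of the hypotheses that cost nothing: since $\Ii(\K, \lambda) = 1$ every model in $\K_\lambda$ is isomorphic to $N_0$, and since $\K_{\lambda^{++}} \neq \emptyset$ together with $\Ii(\K, \lambda^+) = 1$ one gets that $\K_{\lambda^+}$ has \emph{no} maximal models (any $M \in \K_{\lambda^+}$ is isomorphic to an $\LS$-submodel $M^{**}$ of some $W \in \K_{\lambda^{++}}$, and inside $W$ one produces a proper $\lambda^+$-extension of $M^{**}$); similarly $\K_\lambda$ has no maximal models. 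These will be the facts the eventual contradiction contradicts.

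Next I would reformulate maximality as a \emph{capturing} property: $(a, M_0, N_0)$ is maximal exactly when, for every $N' \gea N_0$ in $\K_\lambda$ and every $M'$ with $M_0 \lta M' \lea N'$, one has $a \in |M'|$ (an easy $\LS$-argument shows this persists for $N'$ of any size $\geq \lambda$ extending $N_0$). The key point is that capturing is invariant under isomorphism, so by categoricity in $\lambda$ every copy of $N_0$ carries a copy of the maximal triple. I would then build an increasing continuous chain $\langle M_i : i < \lambda^+ \rangle$ in $\K_\lambda$ together with isomorphisms $h_i : N_0 \cong M_{i+1}$ sending $M_0$ onto $M_i$, so that $a_i := h_i(a) \in |M_{i+1}| \setminus |M_i|$ and each $(a_i, M_i, M_{i+1})$ is again a maximal triple. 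Setting $M^* = \bigcup_{i<\lambda^+} M_i \in \K_{\lambda^+}$, the goal is to show $M^*$ is maximal: if $M^* \lta M^+$ with witness $c \in |M^+| \setminus |M^*|$, then applying capturing to each copy $(a_i, M_i, M_{i+1})$ against the $\LS$-hull of $|M_i| \cup \{c\}$ forces $a_i$ into that hull for every $i$, and the aim is to accumulate these constraints into the conclusion that $c$ already lies in $M^*$, a contradiction.

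Once $M^*$ is shown maximal, the proof closes quickly: by $\Ii(\K, \lambda^+) = 1$ all models of $\K_{\lambda^+}$ are isomorphic to $M^*$, hence all are maximal, which forces $\K_{\lambda^{++}} = \emptyset$ and contradicts the hypothesis (equivalently, it contradicts the absence of maximal models in $\K_{\lambda^+}$ recorded above). I expect the genuine difficulty to be precisely the verification that $M^*$ admits no proper extension. The capturing property only directly controls how the \emph{base} of each copy grows, and turning this into a blockage of an arbitrary extension $M^* \lta M^+$ is delicate because the hypotheses of this statement do not include the amalgamation property (Fact \ref{AP} requires $2^\lambda < 2^{\lambda^+}$, which is not assumed here). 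Thus the heart of the argument is a careful management of extensions of models without amalgamation, ensuring that a hypothetical new element $c$ is forced to coincide with one of the captured elements; this is exactly the technical content of \cite[2.4]{sh576} and the streamlined treatment in \cite[7.11]{grossberg2002}, on which I would model the details.
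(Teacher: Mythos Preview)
The paper does not prove this statement at all: it is recorded as a \emph{Fact} with citations to \cite[2.4]{sh576} and \cite[7.11]{grossberg2002}. So there is no ``paper's proof'' to compare against; your proposal is effectively a sketch of what the cited proof should look like.

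Your overall architecture is correct and matches the cited arguments: copy the maximal triple along a $\lambda^+$-chain using $\lambda$-categoricity, and show the union $M^* \in \K_{\lambda^+}$ is maximal, contradicting $\K_{\lambda^{++}} \neq \emptyset$ via $\lambda^+$-categoricity. The construction of the chain and the reformulation of maximality as ``capturing'' are both fine.

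The gap is exactly where you flag it, but your description of how to close it (``accumulate these constraints into the conclusion that $c$ already lies in $M^*$'') is not an argument, and as written does not work: knowing $a_i \in P_i$ for $P_i$ an $\LS$-hull of $M_i \cup \{c\}$ gives no direct information about $c$, since $a_i \in M_{i+1} \subseteq M^*$ anyway. Deferring to the very references the paper cites is circular. The missing idea is a \emph{reflection} trick: given $c \in M^+ \setminus M^*$, build an $\omega$-chain $Q_0 \lea Q_1 \lea \cdots$ in $\K_\lambda$ inside $M^+$ and ordinals $i_0 < i_1 < \cdots < \lambda^+$ with $Q_0$ an $\LS$-hull of $M_0 \cup \{c\}$, $Q_{n+1}$ an $\LS$-hull of $M_{i_n} \cup Q_n$, and $i_n$ large enough that $Q_n \cap M^* \subseteq M_{i_n}$. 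Setting $Q = \bigcup_n Q_n$ and $i^* = \sup_n i_n$, one checks $Q \in \K_\lambda$, $M_{i^*} \lea Q \lea M^+$, and $Q \cap M^* = M_{i^*}$. Since $c \in Q \setminus M_{i^*}$ we have $M_{i^*} \lta Q$, so capturing for the maximal triple $(a_{i^*}, M_{i^*}, M_{i^*+1})$ forces $a_{i^*} \in Q$; but $a_{i^*} \in M_{i^*+1} \setminus M_{i^*} \subseteq M^*$, hence $a_{i^*} \in Q \cap M^* = M_{i^*}$, a contradiction. This is the actual content of the step you left open, and once you supply it your proof is complete.
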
 

Now that we have that out of the way, we will show some basic properties about minimal pre-types. The following is \cite[2.6]{sh576}. Although the proofs are easy, we sketch them since they don't appear on \cite{sh576} and this facts are used throughout the paper.

\begin{lemma}\label{extendminimal} 
\begin{enumerate}\
\item If $(a, M_0, N_0) \leq (a, M_1, N_1) \in K^3_{\lambda}$ and $(a, M_0, N_0)$ is minimal then $(a, M_1, N_1)$ is minimal.
\item  ($\lambda$-amalgamation property is used) $(a, M_0, N_0)$ is minimal  if and only if the following holds: if $(a, M_0, N_0) \leq_{h_l} (a_l, M_1, N_1)$ for $l \in \{ 1, 2\}$ and $h_1\upharpoonright_{M_0}= h_2\upharpoonright_{M_0}$ then $\gtp(a_1/ M_1, N_1)= \gtp(a_2/ M_1, N_1)$.
\item  ($\lambda$-amalgamation property is used) If $(a, M_0, N_0) \in K_{\lambda}^{3}$, $p=\gtp(a/M_0, N_0)$ and $p$ is minimal then  $(a, M_0, N_0)$ is minimal.
\item ($\lambda$-amalgamation property is used) Let $M \lea M' \in \K_{\lambda}$. If $p \in \gS(M)$ minimal and $q \in \gS(M')$ extending $p$  then $q$ is a minimal type.
\end{enumerate}
\end{lemma}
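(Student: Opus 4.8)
The backbone of the argument is a single characterization of minimality in terms of the underlying Galois type: \emph{$(a, M_0, N_0) \in K_{\lambda}^{3}$ is minimal if and only if $\gtp(a/M_0, N_0)$ has a unique nonalgebraic extension in $\gS^{na}(M_1)$ for every $M_1 \gea M_0$ in $\K_{\lambda}$.} Once this is in place, minimality is exposed as a property of the Galois type $\gtp(a/M_0, N_0)$ alone, and parts (3) and (4) drop out immediately. I would prove (1) and (2) first, since they are lighter and the ``same-$N$'' reformulation in (2) is a convenient normalization, and only then establish the characterization.

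For (1) no amalgamation is needed. To test minimality of $(a, M_1, N_1)$, take maps witnessing $(a, M_1, N_1) \leq_{h_l} (a_l, M, N^l)$ for $l \in \{1,2\}$ with $h_1\upharpoonright_{M_1} = h_2\upharpoonright_{M_1}$, and precompose each $h_l$ with the inclusion $N_0 \hookrightarrow N_1$. The composites witness $(a, M_0, N_0) \leq (a_l, M, N^l)$ and still agree on $M_0$ (they already agree on the larger $M_1$), so minimality of $(a, M_0, N_0)$ gives $\gtp(a_1/M, N^1) = \gtp(a_2/M, N^2)$, which is exactly what is needed. For (2), the forward direction is the special case $N_1^1 = N_1^2$. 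For the converse I would amalgamate the two witnesses $N_1^1, N_1^2$ over their common base $M_1$, getting $\mu_l \colon N_1^l \to_{M_1} P$; then $\mu_1 \circ h_1$ and $\mu_2 \circ h_2$ are two maps into the \emph{single} model $P$ still agreeing on $M_0$, so the ``same-$N$'' hypothesis yields $\gtp(\mu_1(a_1)/M_1, P) = \gtp(\mu_2(a_2)/M_1, P)$, and invariance of Galois types under the $M_1$-embeddings $\mu_l$ turns this into $\gtp(a_1/M_1, N_1^1) = \gtp(a_2/M_1, N_1^2)$.

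For the characterization, the direction ($\Leftarrow$) is easy: after a single renaming (legitimate because $h_1\upharpoonright_{M_0} = h_2\upharpoonright_{M_0}$) I may assume this common restriction is the inclusion $M_0 \hookrightarrow M_1$, and then $\gtp(a_1/M_1, N_1^1)$ and $\gtp(a_2/M_1, N_1^2)$ are two nonalgebraic extensions of $\gtp(a/M_0, N_0)$ to $M_1$, hence equal by hypothesis. The direction ($\Rightarrow$) is where I expect the main obstacle. Given two nonalgebraic extensions $q_1, q_2 \in \gS^{na}(M_1)$ of $p = \gtp(a/M_0, N_0)$, I must manufacture one configuration to which minimality of $(a, M_0, N_0)$ applies; the snag is that minimality is tested over a \emph{common} base $M_1$, whereas the naive amalgamation realizing $p$ inside a model containing $M_1$ moves $M_1$ and produces \emph{different} ambient models for $q_1$ and $q_2$. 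I would get around this as follows. First amalgamate the witnesses of $q_1, q_2$ over $M_1$ so that both are realized inside one $P \gea M_1$ by elements $b_1, b_2$ with $\gtp(b_l/M_1, P) = q_l$. Since $\gtp(b_l/M_0, P) = p = \gtp(a/M_0, N_0)$ and $\K_{\lambda}$ has amalgamation (so $E_{at} = E$), for each $l$ I can find $U_l \gea P$ and a $\K$-embedding $d_l \colon N_0 \to U_l$ with $d_l\upharpoonright_{M_0} = \id$ and $d_l(a) = b_l$; the crucial point of this ``renaming'' is that $U_l$ \emph{extends} $P$, so $M_1 \lea U_l$ survives as an honest submodel. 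Then $(a, M_0, N_0) \leq_{d_l} (b_l, M_1, U_l)$ with $d_1\upharpoonright_{M_0} = d_2\upharpoonright_{M_0} = \id$, and minimality of $(a, M_0, N_0)$ gives $\gtp(b_1/M_1, U_1) = \gtp(b_2/M_1, U_2)$, whence $q_1 = q_2$.

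Finally, (3) and (4) are read off the characterization. For (3): if $p = \gtp(a/M_0, N_0)$ is minimal then some representative, hence by the characterization $p$ itself, has a unique nonalgebraic extension over every larger base; as this refers only to $p$, the given representative $(a, M_0, N_0)$ is minimal. For (4): if $p \in \gS(M)$ is minimal and $q \in \gS^{na}(M')$ extends $p$ with $M \lea M'$, then any two nonalgebraic extensions of $q$ over a common $M'' \gea M'$ are in particular nonalgebraic extensions of $p$ over $M'' \gea M$, hence equal by minimality of $p$; so $q$ has unique nonalgebraic extensions and is therefore minimal.
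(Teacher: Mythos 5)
Your proposal is correct. For part (2) it follows the paper's strategy --- map both configurations into a single model by amalgamation and then invoke the same-$N$ hypothesis --- but with one notable difference: you amalgamate $N_1^1$ and $N_1^2$ over $M_1$, so that $\mu_2$ fixes $M_1$ pointwise and the equality $\gtp(\mu_1(a_1)/M_1,P)=\gtp(\mu_2(a_2)/M_1,P)$ pulls back to $\gtp(a_1/M_1,N_1^1)=\gtp(a_2/M_1,N_1^2)$. The paper's printed diagram instead amalgamates over $N_0$ along $h_1,h_2$; that forces $j\circ h_2=h_1$, so the application of the hypothesis becomes vacuous (the two configurations coincide), and since $j$ need not fix $M_1$ the desired conclusion cannot be recovered from it --- your choice of base for the amalgamation is the correct repair of that sketch. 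Where you genuinely diverge is on (1), (3), (4), which the paper dismisses as straightforward: you derive (3) and (4) uniformly from the characterization that a pre-type is minimal iff its Galois type has at most one nonalgebraic extension over every larger model in $\K_\lambda$. This isolates exactly why minimality depends only on the type (which is the content of (3)) and why it passes to nonalgebraic extensions (which is (4)), at the modest cost of the $E_{at}=E$ renaming argument producing the embeddings $d_l\colon N_0\to U_l$ into extensions of $P$. Two small corrections of wording, neither affecting the mathematics: state the characterization with ``at most one nonalgebraic extension'' rather than ``a unique nonalgebraic extension'' --- existence of nonalgebraic extensions is the extension property, which is far from automatic here (it is what Section 4.3 of the paper labors to prove), and your arguments only use and only establish the ``at most one'' half; and in (4) you tacitly read $q\in\gS^{na}(M')$, which is indeed the intended reading, since an algebraic type can never be minimal.
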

\begin{proof}
(1), (3) and (4) are straightforward so let us sketch (2). The forward direction is trivial so let us show the backward one. Suppose $(a, M_0, N_0) \leq_{h_l} (a_l, M_1, N_1^{l})$ for $l \in \{ 1, 2\}$ and $h_1\upharpoonright_{M_0}= h_2\upharpoonright_{M_0}$, then apply the amalgamation property such that the following diagram commutes:
\[
 \xymatrix{\ar @{} [dr] N_2  \ar[r]^{j}  & N \\
N_0 \ar[u]^{h_2} \ar[r]^{h_1} & N_1  \ar[u]_{\id}
 }
\]

Then simply apply the hypothesis to $h_1'= h_1$, $h_2' = j \circ h_2$ and $N$.

\end{proof}

First let us show that $\s_{\min}$ is a pre-$\lambda$-frame. This appears without a proof in \cite[\S VI.8.1(1)]{shelahaecbook}.
\begin{lemma}\label{pre-frame}
Suppose $2^{\lambda}< 2^{\lambda^{+}}$. If $\K$ is $\lambda$-categorical and $1\leq \Ii( \K, \lambda^{+}) < 2 ^{\lambda^{+}}$ then $\s_{min}=(\K_{min}, \dnf_{min}, \gS^{bs}_{min})$ is a pre-$\lambda$-frame.
\end{lemma}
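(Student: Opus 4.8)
The goal is to verify the six axioms of a pre-$\lambda$-frame (Definition \ref{p-frame}) for the triple $\s_{min}=(\K_{min}, \dnf_{min}, \gS^{bs}_{min})$. Since $\K_{min}=\K_\lambda$ and $\K$ is $\lambda$-categorical, condition (1) is immediate: $\K$ is an AEC with $\lambda \geq \LS(\K)$ and $\K_\lambda \neq \emptyset$. Before checking the remaining axioms I would record that the hypotheses $2^\lambda < 2^{\lambda^+}$ and $1 \leq \Ii(\K,\lambda^+) < 2^{\lambda^+}$ together with $\lambda$-categoricity ($\Ii(\K,\lambda)=1$) let me invoke Fact \ref{AP} to get that $\K_\lambda$ has the amalgamation property. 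This is the key enabling fact, because Lemma \ref{extendminimal}(2),(3),(4) — which identify minimality of a pre-type with minimality of its Galois type and establish monotonicity of minimality under extension — all require $\lambda$-amalgamation.

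The plan is then to check the axioms in turn. For (2), $\gS^{bs}_{min} \subseteq \bigcup_{M}\gS^{na}(M)$ holds by definition since every $(a,M,N)\in K^3_\lambda$ has $a \in |N|\setminus|M|$, so minimal types are nonalgebraic. For (3), I must verify that $\dnf_{min}$ is a relation on the right quadruples; this is immediate from its definition via $\gtp(a/M_1,N)\upharpoonright_{M_0}$ being minimal. The substantive work is axioms (4) Invariance, (5) Monotonicity, and (6) Nonforking types are basic. For Invariance, I would observe that applying an isomorphism $f\colon N \cong N'$ carries the defining diagrams of a minimal pre-type to the corresponding diagrams for the image, so minimality (hence membership in $\gS^{bs}_{min}$ and the nonforking relation) is preserved; this is essentially routine from the isomorphism-invariant formulation of minimality in Lemma \ref{extendminimal}(2).

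For Monotonicity (5), suppose $a \dnf_{M_0}^N M_1$, i.e.\ $\gtp(a/M_1,N)\upharpoonright_{M_0}$ is minimal, and consider the enlarged/shrunk chain $M_0 \lea M_0' \lea M_1' \lea M_1 \lea N' \lea N \lea N''$ with $a \in N'$. I need $a \dnf_{M_0'}^{N'} M_1'$ and $a \dnf_{M_0'}^{N''} M_1'$, that is, $\gtp(a/M_1',N')\upharpoonright_{M_0'}$ and $\gtp(a/M_1',N'')\upharpoonright_{M_0'}$ are minimal. Since restricting a Galois type to a smaller model and changing the ambient model (among $N', N, N''$) does not change the type over $M_1'$, both reduce to showing $\gtp(a/M_0',N)$ is minimal. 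This follows from Lemma \ref{extendminimal}(4): the restriction of the minimal type $\gtp(a/M_1,N)\upharpoonright_{M_0}$ to $M_0'$ (with $M_0 \lea M_0'$) extends a minimal type and is therefore minimal, using $\lambda$-amalgamation. For (6) Nonforking types are basic: if $a \dnf_M^N M$ then by definition $\gtp(a/M,N)\upharpoonright_M = \gtp(a/M,N)$ is minimal, so by Lemma \ref{extendminimal}(3) the triple $(a,M,N)$ is minimal and hence $\gtp(a/M,N)\in \gS^{bs}_{min}(M)$, as required.

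The main obstacle I anticipate is being careful about the precise level at which minimality is invoked: the definition of $\dnf_{min}$ speaks of minimality of the \emph{restricted type} $\gtp(a/M_1,N)\upharpoonright_{M_0}$, whereas the Lemma \ref{extendminimal} machinery translates between minimal \emph{pre-types} $(a,M,N)$ and minimal \emph{types}. Bridging these in the Monotonicity argument — ensuring that the type over the smaller model $M_0'$ really does inherit minimality when we both enlarge the base from $M_0$ to $M_0'$ and vary the monster $N', N, N''$ — is where the $\lambda$-amalgamation hypothesis does its real work, and I would make sure each appeal to Lemma \ref{extendminimal}(2)--(4) has its amalgamation hypothesis met. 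Everything else is bookkeeping against Definition \ref{p-frame}.
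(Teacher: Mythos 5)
Your proposal is correct and takes essentially the same route as the paper's proof: conditions (1)--(3) are immediate, Invariance follows from the isomorphism-invariance of minimality, Monotonicity reduces via Lemma \ref{extendminimal}(4) to the fact that any extension of a minimal type is minimal, and ``nonforking types are basic'' holds by definition. You are merely more explicit than the paper in two harmless ways: you spell out that $\lambda$-amalgamation comes from Fact \ref{AP}, and your appeal to Lemma \ref{extendminimal}(3) in axiom (6) is not needed, since membership in $\gS^{bs}_{min}$ is by definition the same as minimality of the type.
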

\begin{proof}
It is clear that (1) through (3) of the definition of pre-$\lambda$-frame are satisfied, so let us check that (4) through (6) are satisfied:
\begin{enumerate}
\skipitems{3}
\item \underline{Invariance:} It follows from the fact that minimal pre-types are closed under isomorphisms.
\item \underline{Monotonicity:} It follows from Lemma \ref{extendminimal}(4).
\item  \underline{Non-forking types are basic:} By definition.
\end{enumerate}
\end{proof}

Moreover, we can show the following.

\begin{lemma}\label{pre-frame2}
Suppose $2^{\lambda}< 2^{\lambda^{+}}$. If $\K$ is $\lambda$-categorical and $1 \leq \Ii (\K, \lambda^{+}) < 2 ^{\lambda^{+}}$  then $\s_{min}$ satisfies:
\begin{enumerate}
\skipitems{1}
\item $\K_\lambda$ has amalgamation, joint embedding and no maximal models.
\skipitems{3}
\item Uniqueness.
\skipitems{2}
\item Continuity.
\end{enumerate}
\end{lemma}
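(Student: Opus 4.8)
The three clauses are of different character, so I would treat them separately.

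\emph{Part (2).} Since $\K$ is $\lambda$-categorical we have $\Ii(\K,\lambda)=1$, so the hypotheses $2^{\lambda}<2^{\lambda^{+}}$ and $1\le\Ii(\K,\lambda^{+})<2^{\lambda^{+}}$ are exactly those of Fact~\ref{AP}; this yields amalgamation in $\K_\lambda$ at once. Joint embedding is immediate from $\lambda$-categoricity: any $M,N\in\K_\lambda$ are isomorphic, so an isomorphism $M\cong N$ is a $\K$-embedding of $M$ into $N$ and $N\lea N$ witnesses the property. For no maximal models, $\Ii(\K,\lambda^{+})\ge 1$ gives some $M^+\in\K_{\lambda^+}$; writing $M^+$ as the union of a $\lea$-increasing continuous chain of models of size $\lambda$ (LST axiom), some link of the chain has a proper $\K$-extension of size $\lambda$, and by $\lambda$-categoricity every member of $\K_\lambda$ is isomorphic to that link and hence has a proper extension.

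\emph{Part (6) (Uniqueness).} Unravelling $\dnf_{min}$, ``$p$ does not fork over $M$'' means $p\upharpoonright_M$ is minimal, so I must show: if $M\lea N$ in $\K_\lambda$ and $p,q\in\gS(N)$ both restrict to one minimal type $r:=p\upharpoonright_M=q\upharpoonright_M$, then $p=q$. Fix $r=\gtp(a/M,N^*)$ with $(a,M,N^*)$ minimal (Lemma~\ref{extendminimal}(3)). The key move is to realise each of $p,q$ as a $\leq_h$-extension of $(a,M,N^*)$ via a map defined only over $M$. Concretely, write $p=\gtp(b/N,P)$ with $N\lea P$; then $\gtp(b/M,P)=r=\gtp(a/M,N^*)$, so by amalgamation (keeping a copy of $P$ fixed) there are $R\gea P$ and a $\K$-embedding $g\colon N^*\to R$ with $g\upharpoonright_M=\id$ and $g(a)=b$. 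Since $b\in P\lea R$, monotonicity of Galois types gives $\gtp(g(a)/N,R)=\gtp(b/N,P)=p$, and $g\upharpoonright_M$ is the inclusion $M\hookrightarrow N$, so $(a,M,N^*)\leq_g (g(a),N,R)$. The same construction for $q$ yields $g'\colon N^*\to R'$ with $g'\upharpoonright_M=\id$ and $\gtp(g'(a)/N,R')=q$. As $g\upharpoonright_M=g'\upharpoonright_M$, minimality of $(a,M,N^*)$ (applied with common second base $M_1=N$ and ambient models $R,R'$) forces $\gtp(g(a)/N,R)=\gtp(g'(a)/N,R')$, i.e. $p=q$.

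\emph{Part (9) (Continuity).} Here $\mu=\lambda^{+}$, so any chain $\{M_i:i<\delta\}$ with $\delta<\lambda^+$ has union $M_\delta\in\K_\lambda$. Given a coherent sequence with each $p_i\in\gS^{bs}_{min}(M_i)$ minimal and an upper bound $p\in\gS^{na}(M_\delta)$, the fact that $p$ extends each $p_i$ gives $p\upharpoonright_{M_0}=p_0$, which is minimal; since $M_0\lea M_\delta$, Lemma~\ref{extendminimal}(4) shows $p$ is minimal, i.e. $p\in\gS^{bs}_{min}(M_\delta)$. The ``moreover'' clause is automatic in $\s_{min}$, where non-forking over $M_0$ just says the restriction to $M_0$ is minimal: this holds for every $p_i$ (as $p_i\upharpoonright_{M_0}=p_0$) and for $p$.

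I expect the uniqueness argument to be the only delicate point, and within it the single thing requiring care is arranging the amalgamation so that the copy of $P$, and hence of $N$, is kept fixed; this is what makes $g\upharpoonright_M$ the genuine inclusion $M\hookrightarrow N$ and the base of both extensions the actual $N$, so that minimality applies verbatim. Parts (2) and (9), by contrast, reduce immediately to Fact~\ref{AP}, $\lambda$-categoricity, and Lemma~\ref{extendminimal}(4).
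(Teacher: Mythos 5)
Your proposal is correct and follows essentially the same route as the paper: part (2) via Fact \ref{AP} together with $\lambda$-categoricity, part (6) by unpacking the definition of a minimal pre-type (the paper disposes of this in one line, ``it follows from the definition of minimal type''; your amalgamation-over-$P$ argument is precisely the intended content), and part (9) via Lemma \ref{extendminimal}(4) exactly as in the paper.
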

\begin{proof}\
\begin{enumerate}
\skipitems{1}
\item \underline{$\K_\lambda$ has amalgamation, joint embedding and no maximal models:} The amalgamation property follows from Remark \ref{AP2}. Joint embedding follows from $\lambda$-categoricity and no maximal models from $\lambda$-categoricity and the fact that $\K_{\lambda^+} \neq \emptyset$.
\skipitems{3}
\item \underline{Uniqueness:} It follows from the definition of minimal type.
\skipitems{2}
\item \underline{Continuity:} Let $\delta < \lambda^+$, $\{ M_i : i < \delta \} \subseteq \K_{\lambda}$  an increasing continuous chain, $\{p_{i} : i < \delta \}$ with $p_i \in \gS^{bs}_{min}(M_i)$ and for $i < j < \delta$ implies that $p_i = p_j \upharpoonright _ {M_i}$ and $p \in \gS^{na} (M_\delta)$ an upper bound. Since $p\upp_{M_0}= p_0$ and $p_0$ is minimal by Lemma \ref{extendminimal}(4) it follows that $p$ is minimal and hence basic.

Moreover, if each $p_i$ does not fork over $M_0$, then by definition $p\upp_{M_0}$ is minimal. Hence $p$ does not fork over $M_0$.

\end{enumerate}
\end{proof}

Therefore to show that $\s_{min}$ is a w-good $\lambda$-frame, we just need to show that it satisfies weak density and existence of nonforking extension. The proofs of these two facts are more complicated and will use all the hypothesis of Theorem \ref{main} together with the assumption that $\K_{\lambda^{+++}} = \emptyset$. Before we do that there is a very useful property that we get by assuming that $\K_{\lambda^{+++}} = \emptyset$.

\begin{defin}
Let $M \in \K_{\mu}$ and $LS(\K) \leq \lambda \leq \mu$ infinite cardinals. $M$ is \emph{universal above $\lambda$} if and only if for all $N_{0}, N_{1} \in \K_{[\lambda, \mu]}$ such that $N_{1} \gea N_{0} \lea M$ there is $f: N_1 \rightarrow_{N_{0}} M$. 
\end{defin}

 The following is similar to \cite[2.2]{sh576}, but instead of working in $\lambda^{++}$ we work in $\lambda^{+++}$.

\begin{lemma}\label{monster}
  If $\K_{ \{ \lambda, \lambda^{+} \}}$ has the amalgamation property, $\K_{\lambda^{++}} \neq \emptyset$ and $\K_{\lambda^{+++}}  = \emptyset$, then there is $\ce \in \K_{\lambda^{++}}$ universal above $\lambda$. Moreover if $\Ii(\K, \lambda)=\Ii(\K, \lambda^+)=1$, for each $N \in \K_{\{ \lambda, \lambda^+\}}$ there is $\ce \in \K_{ \lambda^{++}}$ universal above $\lambda$ such that $N \lea \ce$.

\end{lemma}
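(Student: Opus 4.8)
The plan is to build a $\lea$-increasing continuous chain of models of size $\lambda^{++}$ that repeatedly absorbs extensions, and to exploit the hypothesis $\K_{\lambda^{+++}} = \emptyset$ to force the chain to stabilize at a model that is universal above $\lambda$. For the main clause I would start with any $\ce_0 \in \K_{\lambda^{++}}$ (nonempty by hypothesis) and recursively construct $\langle \ce_\alpha : \alpha < \lambda^{+++}\rangle$, $\lea$-increasing and continuous, with each $\ce_\alpha \in \K_{\lambda^{++}}$. At a successor $\alpha+1$: if $\ce_\alpha$ is already universal above $\lambda$, I halt and output $\ce := \ce_\alpha$. Otherwise there are witnesses $N_0, N_1 \in \K_{[\lambda, \lambda^{++}]}$ with $N_1 \gea N_0 \lea \ce_\alpha$ and no $\K$-embedding $N_1 \rightarrow_{N_0} \ce_\alpha$; I amalgamate $\ce_\alpha$ and $N_1$ over $N_0$ to obtain $\ce_{\alpha+1} \gea \ce_\alpha$ of size $\lambda^{++}$ into which $N_1$ embeds over $N_0$. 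Since $N_1$ did not previously embed over $N_0$, this forces $\ce_{\alpha+1} \neq \ce_\alpha$, hence $\ce_\alpha \lta \ce_{\alpha+1}$. At limit stages I take unions, which remain in $\K_{\lambda^{++}}$: the length reached is $<\lambda^{+++}$ and each model has size $\lambda^{++}$, so the union has size $\lambda^{++}$ and lies in $\K$ by the AEC chain axioms.

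The key step is termination. If the construction never halts, it yields a $\lea$-increasing continuous chain of length $\lambda^{+++}$ all of whose successor steps are strict; choosing $x_\alpha \in |\ce_{\alpha+1}| \setminus |\ce_\alpha|$ for each $\alpha$ gives $\lambda^{+++}$ distinct elements, so $\bigcup_{\alpha<\lambda^{+++}} \ce_\alpha$ is a member of $\K$ of size $\lambda^{+++}$, contradicting $\K_{\lambda^{+++}} = \emptyset$. Hence the construction halts at some $\alpha^* < \lambda^{+++}$, and $\ce_{\alpha^*}$ is universal above $\lambda$, which proves the main clause.

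For the moreover clause I would only modify the base of the recursion so that $N \lea \ce_0$. Given $N \in \K_{\{\lambda,\lambda^+\}}$, pick any $M \in \K_{\lambda^{++}}$ and, by the L\"owenheim--Skolem axiom, a submodel $M' \lea M$ with $\|M'\| = \|N\|$; since $\Ii(\K, \|N\|) = 1$ we have $N \cong M'$, and renaming the elements of $M$ along this isomorphism produces $\ce_0 \cong M$ (so $\ce_0 \in \K_{\lambda^{++}}$) with $N \lea \ce_0$. Running the construction from this $\ce_0$ yields $\ce \gea \ce_0 \gea N$, as required.

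The main obstacle I anticipate is the amalgamation used at the successor step. When $N_0$ has size $\lambda^{++}$ one is amalgamating two models of size $\lambda^{++}$ over a common submodel of size $\lambda^{++}$, so one must ensure the relevant amalgamation property is available at this cardinal and that the amalgam can be taken of size $\lambda^{++}$, so that the chain never escapes $\K_{\lambda^{++}}$; this is exactly the point where the amalgamation hypothesis must be applied with care. The remaining size bookkeeping — verifying that every limit union below $\lambda^{+++}$ stays in $\K_{\lambda^{++}}$ and that the terminal union would have size exactly $\lambda^{+++}$ — is routine but essential to the contradiction.
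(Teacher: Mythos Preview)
Your chain construction is more laborious than needed, and the obstacle you flag at the end is a genuine gap rather than a detail: with only $\{\lambda,\lambda^{+}\}$-amalgamation you cannot in general amalgamate over a base of size $\lambda^{++}$, so when the failure witnesses $N_0,N_1$ lie in $\K_{\lambda^{++}}$ your successor step is unjustified. As written, the recursion may simply get stuck rather than either halting at a universal model or reaching length $\lambda^{+++}$.

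The paper takes a shorter route that sidesteps this entirely. From $\K_{\lambda^{+++}}=\emptyset$ one gets immediately a \emph{maximal} $\ce\in\K_{\lambda^{++}}$ (otherwise a strictly $\lea$-increasing chain of length $\lambda^{+++}$ in $\K_{\lambda^{++}}$ would produce a model of size $\lambda^{+++}$). Now given $N_0\lea N_1$ in $\K_{\{\lambda,\lambda^{+}\}}$ with $N_0\lea\ce$, amalgamate $N_1$ and $\ce$ over $N_0$ to obtain some $M\gea\ce$; maximality forces $M=\ce$, so $N_1$ embeds into $\ce$ over $N_0$ and we are done. The moreover clause is then exactly your renaming-by-categoricity argument. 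Two points are worth noting. First, the paper only checks universality for $N_0,N_1$ of size at most $\lambda^{+}$, which is all the later applications ever use, so the problematic $\lambda^{++}$ case simply does not arise. Second, the single amalgamation that is performed (a small $N_1$ against the large $\ce$ over a small base) does follow from $\{\lambda,\lambda^{+}\}$-AP by resolving $\ce$ as an increasing union of models in $\K_{\{\lambda,\lambda^{+}\}}$ and iterating; this is routine. If you patched your argument by restricting the witnesses to $\K_{\{\lambda,\lambda^{+}\}}$ it would go through, but it would still be a $\lambda^{+++}$-length recursion reproving in effect that a maximal model exists, which the paper gets in one line.
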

\begin{proof}

Since $\K_{\lambda^{+++}} = \emptyset$ there is $\ce \in \K_{\lambda^{++}}$ maximal. We claim that $\ce$ is universal above $\lambda$.  Let $N_{0} \lea N_1 \in \K_{\{\lambda, \lambda^+\}}$, then since $\K_{ \{ \lambda, \lambda^{+} \}}$ has the amalgamation property, there is $M \in \K_{\lambda^{++}}$ such that the following diagram commutes:

\[
 \xymatrix{\ar @{} [dr] N_1  \ar[r]^{f}  & M\\
N_0 \ar[u]^{\id} \ar[r]^{\id} & \ce \ar[u]_{\id}
 }
\]

Since $\ce$ is maximal, we have that $\ce = M$. Hence $f: N_1 \rightarrow_{N_0} \ce$. The moreover part follows from $\lambda$-categoricity or  $\lambda^+$-categoricity copying $\ce$. 

\end{proof}

\subsection{Weak density}
The only place where we use the extra cardinal arithmetic hypothesis that $2^{\lambda^+} > \lambda^{++}$ is to prove the following lemma, since we are already assuming that $2^{\lambda}<2^{\lambda^+}$ this is a weak hypothesis.

The lemma below is \cite[2.7]{sh576}. Shelah's proof and our proof are very similar, but we have decided to include it for the sake of completeness.

\begin{lemma}\label{minimal}
Suppose $2^{\lambda} < 2^{\lambda^+} < 2^{\lambda^++}$ and $2^{\lambda^+} > \lambda^{++}$. If $(\ast)_\lambda$ and $\K_{\lambda^{+++}} = \emptyset$ then minimal pre-types are dense in $K_{\lambda}^{3}$, i.e., for every pre-type there is a minimal one above it. 
\end{lemma}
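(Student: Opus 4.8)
The plan is to argue by contradiction. Suppose minimal pre-types are not dense in $K_{\lambda}^{3}$; then there is $(a^*, M^*, N^*) \in K_{\lambda}^{3}$ with no minimal pre-type above it, and by Fact \ref{reduce} I may assume it is reduced. Two facts set up the argument. First, Fact \ref{wep} applies (since $\Ii(\K,\lambda)=\Ii(\K,\lambda^+)=1$ and $\K_{\lambda^{++}}\neq\emptyset$), so $K_{\lambda}^{3}$ has no maximal pre-type and proper extensions always exist. Second, every $(a, M, N) \geq (a^*, M^*, N^*)$ fails to be minimal, so by the amalgamation reformulation in Lemma \ref{extendminimal}(2) it genuinely splits: there are $M \lta M^+$ in $\K_{\lambda}$ and two distinct $q_0 \neq q_1 \in \gS(M^+)$, each extending $\gtp(a/M, N)$. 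Throughout I use the $\{\lambda,\lambda^+\}$-amalgamation property granted by Remark \ref{AP2}.

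The endgame is a counting argument inside a universal model. By Lemma \ref{monster} (applicable as $\K_{\lambda^{++}}\neq\emptyset$ and $\K_{\lambda^{+++}}=\emptyset$) fix $\ce \in \K_{\lambda^{++}}$ universal above $\lambda$ with a copy of $N^*$ inside it. I aim to produce $2^{\lambda^+}$ pairwise distinct Galois types over a single model $M^\dagger \in \K_{\lambda^+}$ with $M^\dagger \lea \ce$, each realized in $\ce$. The key observation is that a Galois type over the fixed $M^\dagger$ is determined by any element of $\ce$ realizing it: if $c \in |\ce|$ realizes both $p$ and $p'$ over $M^\dagger$ then $p=\gtp(c/M^\dagger,\ce)=p'$. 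Hence the map sending each type to a realizing element of $\ce$ is injective, embedding the $2^{\lambda^+}$ types into $|\ce|$. Since $|\ce|=\lambda^{++}<2^{\lambda^+}$ by hypothesis, this is the contradiction. This is precisely the step where $2^{\lambda^+}>\lambda^{++}$ is used, replacing the set-theoretic counting of \cite{sh576}.

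To manufacture the types I would build a splitting tree of pre-types of height $\lambda^+$: an increasing continuous chain $\langle M_i : i < \lambda^+\rangle$ in $\K_\lambda$ resolving a model $M^\dagger \in \K_{\lambda^+}$, together with a coherent family $\langle p_\eta : \eta \in {}^{<\lambda^+}2\rangle$ with $p_\eta \in \gS(M_{\ell(\eta)})$, all extending $\gtp(a^*/M^*,N^*)$, such that $p_\nu = p_\eta\upharpoonright M_{\ell(\nu)}$ for $\nu \trianglelefteq \eta$, while the two successors $p_{\eta\frown 0}, p_{\eta\frown 1}$ are forced to disagree. At successor stages I invoke non-minimality to split; at limit stages I use the continuity of Galois types and the coherent-sequence lemmas of Section 3.2 (culminating in Lemma \ref{coherent}) to take unions while keeping each type nonalgebraic. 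Each branch $f \in {}^{\lambda^+}2$ then yields, again by the coherent-sequence machinery, a type $p_f \in \gS(M^\dagger)$ extending all $p_{f\upharpoonright i}$; branches first diverging at level $i$ satisfy $p_f\upharpoonright M_{i+1}\neq p_g\upharpoonright M_{i+1}$, so $p_f \neq p_g$. Finally, realizing $p_f$ in some $N_f \in \K_{\lambda^+}$ over $M^\dagger$ and using that $\ce$ is universal above $\lambda$ to embed $N_f$ over $M^\dagger$ into $\ce$, each $p_f$ is realized in $\ce$, completing the count.

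The main obstacle is the tree construction itself: extracting $2^{\lambda^+}$ genuinely distinct types over one model of size $\lambda^+$ from the mere non-minimality of $\gtp(a^*/M^*,N^*)$, while keeping every $M_i$ of size $\lambda$. A naive attempt to split all $2^{|i|}$ nodes of a level simultaneously over one $\lambda$-sized $M_{i+1}$ fails, because amalgamating that many splitting witnesses overflows $\lambda$; the distinctions must instead be introduced by careful bookkeeping so that they are mutually independent and survive into the limit type over $M^\dagger$. This combinatorial core is exactly \cite[2.7]{sh576}, whose construction I would follow, leaning on reduced-ness of $(a^*,M^*,N^*)$ and $\{\lambda,\lambda^+\}$-amalgamation to control the amalgams and to perform the embeddings into $\ce$.
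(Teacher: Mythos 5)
You have the right scaffolding (contradiction via Fact \ref{wep} and Lemma \ref{extendminimal}(2), the universal model $\ce$ from Lemma \ref{monster}, and the pigeonhole against $\|\ce\|=\lambda^{++}<2^{\lambda^+}$), but the core of your argument has a genuine gap: your endgame requires $2^{\lambda^+}$ pairwise distinct types over a \emph{single} model $M^\dagger\in\K_{\lambda^+}$ resolved by a \emph{single} chain $\langle M_i : i<\lambda^+\rangle$ of $\lambda$-sized models, and such a tree of types cannot be built from the hypotheses. With the immediate sibling splitting you specify, an easy induction shows that at level $i$ all $2^{|i|}$ nodes carry pairwise \emph{distinct} types over $M_i$ (distinct types have distinct extensions), so at stage $i+1$ you must split $2^{|i|}$ pairwise distinct types simultaneously over one $\lambda$-sized $M_{i+1}$; since distinct types cannot share a splitting witness, this forces amalgamating $2^{|i|}>\lambda^+$ many witnesses from Lemma \ref{extendminimal}(2) into a model of size $\lambda$, which is impossible (note that $2^\lambda=\lambda^+$ is \emph{not} among the hypotheses, only $2^\lambda<2^{\lambda^+}$). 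If instead you delay splittings by bookkeeping, the per-level splitting capacity is at most $\lambda$ (at most $\lambda^+$ even if you allow the $M_i$ to have size $\lambda^+$), so your tree of types has height and cardinality $\lambda^+$ while you need it to have more than $\lambda^{++}$ branches: you would be constructing a Kurepa-type tree on $\lambda^+$, which is not a theorem of ZFC and does not follow from the cardinal arithmetic assumed here. Your final paragraph flags exactly this obstacle, but the repair you invoke, ``follow \cite[2.7]{sh576},'' does not supply it, because Shelah's construction is not a construction of types over a common chain.

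What the paper (following Shelah) actually does avoids the problem by giving up the common chain entirely: it builds a tree of \emph{pre-types} $\{(a_\eta,M_\eta,N_\eta) : \eta\in 2^{<\lambda^+}\}$ with node-dependent models connected by embeddings $h_{\eta,\nu}$, where only siblings share models ($M_{\eta^{\wedge}0}=M_{\eta^{\wedge}1}$, $N_{\eta^{\wedge}0}=N_{\eta^{\wedge}1}$ with $h_{\eta,\eta^{\wedge}0}\upharpoonright_{M_\eta}=h_{\eta,\eta^{\wedge}1}\upharpoonright_{M_\eta}$ but distinct types of $a_{\eta^{\wedge}0}$ and $a_{\eta^{\wedge}1}$), so each splitting is a single local application of Lemma \ref{extendminimal}(2) with no global amalgamation. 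A second induction then produces embeddings $g_\eta: M_\eta\rightarrow\ce$ coherent with the $h$'s and satisfying $g_{\eta^{\wedge}0}=g_{\eta^{\wedge}1}$; each branch $\eta\in 2^{\lambda^+}$ yields a direct limit and an embedding $f'_\eta: N_\eta\rightarrow\ce$, and the pigeonhole $2^{\lambda^+}>\lambda^{++}$ is applied to the \emph{elements} $f'_\eta(a_\eta)\in\ce$, not to types over a common model. When two branches give the same element, one traces back to their divergence level, where the two maps into $\ce$ agree on the common sibling model and send the two distinguished elements to the same point, witnessing equality of the two sibling types and contradicting the splitting. This coherent-embedding-plus-element-pigeonhole device is what must replace your ``distinct types over $M^\dagger$ need distinct realizers'' count; without that replacement your proof cannot be completed as written.
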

\begin{proof}
We do the proof by contradiction. Let $(a, M, N) \in K_{\lambda}^{3}$ with no minimal pre-type above it. We will build $\{ (a_{\eta}, M_{\eta}, N_{\eta}) : \eta \in 2^{<\lambda^+} \}$ and $\{ h_{\eta, \nu} : \eta < \nu \text{\, with \,} \eta, \nu \in 2^{<\lambda^+} \}$ by induction such that:
\begin{enumerate}
\item $(a_{<>}, M_{<>}, N_{<>}):= (a, M, N) $.
\item $(a_{\eta}, M_{\eta}, N_{\eta}) \in K_{\lambda}^{3}$ for all $\eta \in 2^{<\lambda^+}$.
\item If $\eta < \nu$ then $(a_{\eta}, M_{\eta}, N_{\eta}) \leq_{ h_{\eta, \nu} } (a_{\nu}, M_{\nu}, N_{\nu})$.
\item If $\eta < \nu < \rho$ then $h_{\eta, \rho} = h_{\nu, \rho} \circ h_{\eta, \nu}$.
\item $M_{\eta^{\wedge}0}=M_{\eta^{\wedge}1} , N_{\eta^{\wedge}0}=N_{\eta^{\wedge}1}, h_{\eta, \eta^{\wedge}0}\upharpoonright_{M_{\eta}}= h_{\eta, \eta^{\wedge}1}\upharpoonright_{M_{\eta}}$ for all $\eta \in 2^{<\lambda^+}$.
\item $\gtp(a_{\eta^{\wedge}0}/ M_{\eta^{\wedge}0}, N_{\eta^{\wedge}0}) \neq \gtp(a_{\eta^{\wedge}1}/ M_{\eta^{\wedge}1}, N_{\eta^{\wedge}1})$.
\item If $\eta \in 2^{\delta}$ and $\delta< \lambda^+$ limit then $(M_{\eta}, \{h_{\eta\upharpoonright_{\alpha}, \eta} \}_{\alpha<\delta}), (N_{\eta}, \{h_{\eta\upharpoonright_{\alpha}, \eta} \}_{\alpha<\delta})$ are the direct limits of $( \{ M_{\eta\upharpoonright_{\alpha}} : \alpha < \delta \}, \{ h_{\eta\upharpoonright_{\alpha},\eta\upharpoonright_{\beta} } : \alpha < \beta < \delta \} )$ and $( \{ N_{\eta\upharpoonright_{\alpha}} : \alpha < \delta \}, \{ h_{\eta\upharpoonright_{\alpha},\eta\upharpoonright_{\beta} } : \alpha < \beta < \delta \} )$ respectively where $a_{\eta}=h_{\eta\upharpoonright_{1}, \eta}(a)$. 
\end{enumerate}

\underline{Construction:} In the base step apply (1). On limits take the direct limits, so the only interesting case is when $\alpha = \beta +1$. By construction we are given $(a_{\eta}, M_{\eta}, N_{\eta})$, since $(a_{<>}, M_{<>}, N_{<>}) \leq_{ h_{<>, \eta} } (a_{\eta}, M_{\eta}, N_{\eta})$ it follows that  $(a_{\eta}, M_{\eta}, N_{\eta})$ is not minimal. Applying Lemma \ref{extendminimal} (2) we are done.

\underline{Enough:} By Lemma \ref{monster} there is $\ce \in \K_{\lambda^{++}}$ universal above $\lambda$. We build $\{ g_{\eta} : M_{\eta} \rightarrow \ce : \eta \in 2^{< \lambda^+} \}$ by induction such that:
\begin{enumerate}
\item For every $\nu < \eta$ then $g_{\nu} \circ h_{\eta, \nu} = g_{\eta}$.
\item $g_{\eta^{\wedge}0} = g_{\eta^{\wedge}1}$.
\end{enumerate}
\underline{Construction:} 
Base: Since $\K$ is $\lambda$-categorical there is $g_{<>}: M_{<>} \rightarrow \ce$.

Induction step: If $\alpha$ is limit using that $M_{\eta}$ is a direct limit and the fact that we are constructing a cocone we obtain $g_{\eta} : M_{\eta} \rightarrow \ce$.

If $\alpha = \beta + 1$. Suppose we have $g_{\eta} : M_{\eta} \rightarrow \ce$. By the first construction we have $h_{\eta, \eta^{\wedge}0}[M_{\eta}] \lea M_{\eta^{\wedge}0}$, copying back the structure we build $g$ and $M_{\eta^{\wedge}0}'$ such that:

\[
 \xymatrix{\ar @{} [dr] M_{\eta^{\wedge}0}'  \ar[r]^{\cong_{g}}  & M_{\eta^{\wedge}0} \\
M_{\eta} \ar[u]^{\id} \ar[r]^{\cong_{h_{\eta, \eta^{\wedge}0}}} & h_{\eta, \eta^{\wedge}0}[M_{\eta}] \ar[u]_{\id}
 }
\]

Then copying forward the structure with respect to $g_{\eta}$ we build $h$ and $M_{\eta^{\wedge}0}''$ such that:

\[
 \xymatrix{\ar @{} [dr] M_{\eta^{\wedge}0}'  \ar[r]^{\cong_{h}}  & M_{\eta^{\wedge}0}'' \\
M_{\eta} \ar[u]^{\id} \ar[r]^{\cong_{g_{\eta}}} & g_{\eta}[M_{\eta}] \ar[u]_{\id}
 }
\]

Then using the universality of $\ce$ we get $j: M_{\eta^{\wedge}0}'' \rightarrow_{g_{\eta}[M_{\eta}]} \ce$. So let $g_{\eta^{\wedge}0}:= j \circ h \circ g^{-1}$ and  $g_{\eta^{\wedge}1}:= j \circ h \circ g^{-1}$. Since $M_{\eta^{\wedge}0} = M_{\eta^{\wedge}1}$ it is well-defined.

\underline{Enough:}  For each $\eta \in 2^{\lambda^+}$ let $((a_{\eta}, M_{\eta}, N_{\eta}), \{h_{\nu, \eta} : \nu < \eta\})$ be the direct limit of $( \{ M_{\eta\upharpoonright_{\alpha}} : \alpha < \lambda^+ \}, \{ h_{\eta\upharpoonright_{\alpha},\eta\upharpoonright_{\beta} } : \alpha < \beta < \lambda^+ \} )$ and $( \{ N_{\eta\upharpoonright_{\alpha}} : \alpha < \lambda^+ \}, \{ h_{\eta\upharpoonright_{\alpha},\eta\upharpoonright_{\beta} } : \alpha < \beta < \lambda^+ \} )$ .

By the construction of $\{ g_{\nu} : \nu < \eta \}$ and the definition of direct limits there is $f_{\eta} : M_{\eta} \rightarrow \ce$ such that for any $\nu < \eta(f_{\eta} \circ h_{\nu, \eta} =g_{\nu})$. Using that $\ce$ is universal above $\lambda$ there is $f_{\eta}': N_{\eta} \rightarrow \ce$ such that $f_{\eta} \subseteq f_{\eta}'$.

Observe that for every $\eta \in 2^{\lambda^{+}}$ we have that $f_{\eta}'(a_{\eta}) \in \ce$. Since $\|\ce\|= \lambda^{++}$ and $2^{\lambda^{+}} > \lambda^{++}$ we have $\eta \neq \nu \in 2^{\lambda^+}$ such that $f_{\eta}'(a_{\eta}) = f_{\nu}'(a_{\nu})$. Let $\alpha < \lambda^+$ least such that $\eta\upharpoonright_{\alpha}= \nu\upharpoonright_{\alpha}$ and $\eta(\alpha) \neq \nu(\alpha)$, we may assume without loss of generality that $\eta(\alpha)=0$ and $\nu(\alpha) = 1$.

\fbox{Claim}  $\gtp(a_{\eta\upharpoonright_{\alpha} ^{\wedge}0}/ M_{\eta\upharpoonright_{\alpha} ^{\wedge}0}, N_{\eta\upharpoonright_{\alpha} ^{\wedge}0}) = \gtp(a_{\eta\upharpoonright_{\alpha} ^{\wedge}1}/ M_{\eta\upharpoonright_{\alpha} ^{\wedge}1}, N_{\eta\upharpoonright_{\alpha} ^{\wedge}1})$.

Observe that the following diagram commutes:

\[
 \xymatrix{\ar @{} [dr] N_{\eta \upharpoonright_{\alpha} ^{\wedge}0}  \ar[r]^{ f_{\eta}' \circ h_{\eta\upharpoonright_{\alpha}^{\wedge}0, \eta} }  & \ce \\
M_{\eta\upharpoonright_{\alpha}^{\wedge}0} \ar[u]^{\id} \ar[r]^{\id} & N_{\eta\upharpoonright_{\alpha}^{\wedge}1} \ar[u]_{f_{\nu}' \circ h_{\eta\upharpoonright_{\alpha}^{\wedge}1, \nu}}
 }
\]

Moreover,  since $f_{\eta}'(a_{\eta}) = f_{\nu}'(a_{\nu})$ we have that $f_{\eta}' \circ h_{\eta\upharpoonright_{\alpha}^{\wedge}0, \eta}(a_{\eta\upharpoonright_{\alpha} ^{\wedge}0}) = f_{\nu}' \circ h_{\eta\upharpoonright_{\alpha}^{\wedge}1, \nu}(a_{\eta\upharpoonright_{\alpha} ^{\wedge}1})$. $\dagger_{\text{Claim}}$

Finally observe that this contradicts (6) of the first construction.

\end{proof} 

From the above lemma, the assertion below follows trivially.

\begin{lemma}\label{weak-d}
Suppose $2^{\lambda}< 2^{\lambda^{+}} < 2^{\lambda^{++}}$ and $2^{\lambda^{+}} > \lambda^{++}$. If $(\ast)_\lambda$ and $\K_{\lambda^{+++}} = \emptyset$ then $\s_{min}$ has weak density.
\end{lemma}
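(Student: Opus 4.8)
The plan is to observe that for the frame $\s_{min}$ the weak density property is essentially a restatement of Lemma \ref{minimal}, so the whole argument is a matter of unwinding definitions. First I would recall that $\gS^{bs}_{min}$ consists exactly of the minimal types, and that by Lemma \ref{extendminimal}(3) a type $\gtp(a/M', N')$ is minimal precisely when the pre-type $(a, M', N') \in K_{\lambda}^{3}$ is minimal. Since $\s_{min}$ is a $\lambda$-frame, in the definition of weak density every model lives in $\K_{[\lambda, \lambda^+)} = \K_{\lambda}$. Thus ``$\s_{min}$ has weak density'' reduces to the assertion: for every $M \lta N$ in $\K_{\lambda}$ there are $a \in |N| \backslash |M|$ and a minimal pre-type $(a, M', N') \in K_{\lambda}^{3}$ with $(a, M, N) \leq (a, M', N')$.

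Next I would produce such data directly. Because $M \lta N$ is strict, I may pick any $a \in |N| \backslash |M|$; then $(a, M, N) \in K_{\lambda}^{3}$ is a genuine pre-type. Applying Lemma \ref{minimal} (which is available precisely under the standing hypotheses $2^{\lambda} < 2^{\lambda^+} < 2^{\lambda^{++}}$, $2^{\lambda^+} > \lambda^{++}$, $(\ast)_\lambda$, and $\K_{\lambda^{+++}} = \emptyset$) to $(a, M, N)$ yields a minimal pre-type $(a, M', N') \in K_{\lambda}^{3}$ lying above it, i.e. with $(a, M, N) \leq (a, M', N')$.

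Finally I would verify that this witnesses weak density. Membership $(a, M', N') \in K_{\lambda}^{3}$ gives $M' \lta N'$ in $\K_{\lambda}$ together with $a \in |N'| \backslash |M'|$; minimality of the pre-type gives $\gtp(a/M', N') \in \gS^{bs}_{min}(M')$; and the relation $(a, M, N) \leq (a, M', N')$ supplies the final clause. I do not expect any genuine obstacle at this stage: all of the cardinal-arithmetic input and the use of $\K_{\lambda^{+++}} = \emptyset$ (via the universal model of Lemma \ref{monster} and the counting argument exploiting $2^{\lambda^+} > \lambda^{++}$) have already been absorbed into the proof of Lemma \ref{minimal}. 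Hence the only work left here is the bookkeeping that matches the density of minimal pre-types against the definition of weak density, which is why the conclusion follows trivially.
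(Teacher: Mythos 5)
Your proposal is correct and takes essentially the same route as the paper: both arguments pick an arbitrary $a \in |N| \backslash |M|$, apply Lemma \ref{minimal} to obtain a minimal pre-type $(a, M', N') \geq (a, M, N)$, and note that minimality of the pre-type puts $\gtp(a/M', N')$ in $\gS^{bs}_{min}(M')$. The only (harmless) difference is that your appeal to Lemma \ref{extendminimal}(3) is unnecessary, since $\gS^{bs}_{min}$ is defined directly as the set of types of minimal pre-types, so no translation between minimal types and minimal pre-types is needed.
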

\begin{proof}
Let $M \lta N$ both in $\K_\lambda$, then pick $a \in |N| \backslash |M|$. By the previous theorem there is $(a, M', N') \in K_{\lambda}^3$ such that $(a, M, N) \leq (a, M', N')$ and  $(a, M', N')$ is minimal. Hence $\gtp(a/ M', N') \in \gS_{min}^{bs}(M')$. 
\end{proof}

\subsection{Existence of nonforking extension}  Fact \ref{wep} asserts that $K^{3}_{\lambda}$ has the weak extension property, in this section we will deal with the extension property. 

\begin{defin}\
\begin{itemize}
\item $(a_0, M_0, N_0) \in K_{\lambda}^3$ has the \emph{extension property} if given $M_1\in \K_\lambda$ and $f: M_0 \rightarrow M_1$, there are $N_1 \in \K_\lambda$ and $g: N_0 \rightarrow N_1$ such that $(a_0, M_0, N_0) \leq_g (g(a_0), M_1, N_1)$ and $g \supseteq f$. 
\item $p \in \gS^{na}(M_0)$ has the \emph{extension property} if given $M_1 \in \K_\lambda$ such that $M_0 \lea M_1$ there is $q \in \gS^{na}(M_1)$ extending $p$.
\end{itemize}
\end{defin}

\begin{remark}
$p$ has the extension property if and only if there is $(a, M, N) \in K_{\lambda}^3$ such that $p=\gtp(a/ M, N)$ and $(a, M, N)$ has the extension property. 
\end{remark}

The following fact is \cite[2.11]{sh576}, to show it Shelah used the $\lambda$-amalgamation property. 

\begin{fact}\label{easyext}
If $(a, M_0, N_0) \leq (a, M_1, N_1) \in K_{\lambda}^{3}$ and $(a, M_1, N_1)$ has the extension property then $(a, M_0, N_0)$ has the extension property. 
\end{fact}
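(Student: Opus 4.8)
The plan is to reduce the extension property of the smaller pre-type $(a,M_0,N_0)$ to that of the larger pre-type $(a,M_1,N_1)$ by first propagating a given embedding of $M_0$ up to an embedding of $M_1$ via $\lambda$-amalgamation, applying the hypothesis there, and then restricting back down to $N_0$.

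Concretely, fix $M^* \in \K_\lambda$ and a $\K$-embedding $f : M_0 \to M^*$; I must produce $N^* \in \K_\lambda$ and $g : N_0 \to N^*$ with $(a,M_0,N_0) \leq_g (g(a), M^*, N^*)$ and $g \supseteq f$. First I would apply the $\lambda$-amalgamation property to the span $M_0 \lea M_1$ and $f : M_0 \to M^*$ to obtain $\hat M \in \K_\lambda$ with $M^* \lea \hat M$ together with a $\K$-embedding $h : M_1 \to \hat M$ such that $h\upharpoonright_{M_0}=f$. Here one needs amalgamation in the form that keeps $M^*$ as a genuine $\lea$-substructure, which follows from the stated amalgamation property after transporting $M_1$ along the isomorphism $f : M_0 \cong f[M_0]$ so that the common base $f[M_0]$ sits below both $M^*$ and a copy of $M_1$.

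Next, since $(a,M_1,N_1)$ has the extension property, I would feed it the embedding $h : M_1 \to \hat M$, obtaining $\hat N \in \K_\lambda$ and a $\K$-embedding $\hat g : N_1 \to \hat N$ with $(a, M_1, N_1) \leq_{\hat g} (\hat g(a), \hat M, \hat N)$ and $\hat g \supseteq h$; in particular $\hat M \lea \hat N$ and $\hat g(a) \in |\hat N|\setminus|\hat M|$. Finally I set $N^* := \hat N$ and $g := \hat g \upharpoonright_{N_0}$, which is a $\K$-embedding because $N_0 \lea N_1$. Then $g\upharpoonright_{M_0} = \hat g \upharpoonright_{M_0} = h\upharpoonright_{M_0} = f$, so $g \supseteq f$; moreover $M^* \lea \hat M \lea \hat N = N^*$ and $g(a) = \hat g(a) \notin \hat M \supseteq M^*$, whence $(g(a), M^*, N^*) \in K_\lambda^3$ and $(a,M_0,N_0) \leq_g (g(a),M^*,N^*)$, as required.

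The only delicate point, and hence the main obstacle, is the first step: one must realize the amalgam so that $M^*$ embeds as a true $\lea$-substructure of $\hat M$ and $h$ extends $f$ exactly (not merely up to a further embedding of $M^*$), since the extension property demands $g \supseteq f$ on the nose. This is precisely where $\lambda$-amalgamation is indispensable, matching Shelah's remark. Everything else is a routine diagram chase, and nonalgebraicity of the realized type is automatic from $\hat g(a) \notin \hat M$.
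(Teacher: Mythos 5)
Your proof is correct: lifting $f$ to an embedding of $M_1$ via $\lambda$-amalgamation (after transporting $M_1$ over the isomorphism $f\colon M_0 \cong f[M_0]$ so the span has a genuine common base), applying the extension property of $(a,M_1,N_1)$ to that lifted embedding, and restricting the resulting map to $N_0$ verifies exactly the conditions in the definition, including nonalgebraicity of $(g(a),M^*,N^*)$. The paper itself gives no proof of this statement---it is quoted as a fact from Shelah with only the remark that the $\lambda$-amalgamation property is needed---and your argument is precisely the standard one behind that citation; the only cosmetic point is that the paper's statement of amalgamation produces an amalgam merely in $\K$, so strictly one should cut it down to a model in $\K_\lambda$ containing $M^*$ and the image of $M_1$ by L\"{o}wenheim--Skolem before invoking the extension property.
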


The proof of the following lemma is similar to \cite[2.9]{sh576}, but our proof is shorter since we assume $\lambda^+$-categoricity instead of $1\leq \Ii(\K, \lambda^+) < 2^{\lambda^+}$ and we assume that $\K_{\lambda^{+++}} = \emptyset$. 

\begin{lemma}\label{manyrealizations} Assume $\K_{ \{\lambda, \lambda^+ \} }$ has the amalgamation property, $\K$ is $\lambda^+$-categorical and $\K_{\lambda^{+++}} = \emptyset$. If $(a, M_0, N_0) \in K_{\lambda}^{3}$, $M_0 \lea R$ and $|\{ c \in R : c \text{ realizes } \gtp(a/M_0, N_0) \}| \geq \lambda^{+}$ then $(a, M_0, N_0)$ has the extension property.
\end{lemma}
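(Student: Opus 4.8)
The plan is to prove the extension property by a \emph{counting} argument: the type $p:=\gtp(a/M_0,N_0)$ has at least $\lambda^+$ realizations available, while the target model $M_1$ has size only $\lambda$, so after bringing $R$ and $M_1$ together over $M_0$ some realization of $p$ must lie outside $M_1$, and any realization of $p$ outside $M_1$ produces a nonalgebraic extension of $p$ to $M_1$, which is exactly what the extension property asks for (via the Remark relating the pre-type and type formulations).

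First I would fix $M_1\in\K_\lambda$ and $f:M_0\to M_1$; after replacing $M_0$ by $f[M_0]$ and renaming I may assume $f=\id_{M_0}$ and $M_0\lea M_1$, the general case following by invariance. Since only $\lambda^+$ realizations are needed, apply Löwenheim--Skolem to $R$ to get $R'\lea R$ with $M_0\lea R'$, $\|R'\|=\lambda^+$, and $R'$ still containing $\lambda^+$ realizations of $p$ (realizations persist under $\lea$-extensions because, with $\{\lambda,\lambda^+\}$-amalgamation, Galois types are computed in any common extension). By Lemma \ref{monster} there is $\ce\in\K_{\lambda^{++}}$ universal above $\lambda$; this is the only place the hypothesis $\K_{\lambda^{+++}}=\emptyset$ is used. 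Using $\lambda^+$-categoricity and the universality of $\ce$ I embed $M_1$ and $R'$ into $\ce$ over $M_0$, so that inside $\ce$ we have $M_1\lea\ce$ together with a set $X\subseteq|\ce|$ of $\lambda^+$ elements each realizing $p$ over $M_0$. Since $|X|=\lambda^+>\lambda=\|M_1\|$, choose $c\in X$ with $c\notin|M_1|$. Then $\gtp(c/M_0,\ce)=p$ and $M_0\lea M_1$ give $\gtp(c/M_1,\ce)\upharpoonright_{M_0}=p$, while $c\notin|M_1|$ makes $\gtp(c/M_1,\ce)$ nonalgebraic. Finally reduce to size $\lambda$: by Löwenheim--Skolem pick $N_1\lea\ce$ with $M_1\cup\{c\}\subseteq|N_1|$ and $\|N_1\|=\lambda$, so that $(c,M_1,N_1)\in K_\lambda^3$ and $\gtp(c/N_1)$ is a nonalgebraic extension of $p$ over $M_1$.

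It remains to package this into the required pre-type datum, namely an embedding $g:N_0\to N_1'$ with $M_1\lea N_1'$, $g\upharpoonright_{M_0}=\id_{M_0}$ and $g(a)\notin|M_1|$. Here I would use the equality $\gtp(a/M_0,N_0)=\gtp(c/M_0,N_1)$: since $E_{at}=E$ under amalgamation, it yields a model $N^\ast$ and $\K$-embeddings $k_0:N_0\to_{M_0}N^\ast$, $k_1:N_1\to_{M_0}N^\ast$ with $k_0(a)=k_1(c)$. Cutting $N^\ast$ down to a size-$\lambda$ submodel containing $k_0[N_0]\cup k_1[N_1]$ and then \emph{copying back} along $k_1$ (the same trick used in the proof of Lemma \ref{minimal}) produces $N_1'\gea M_1$ and a $\K$-embedding $g=k_1^{-1}\circ k_0:N_0\to N_1'$ fixing $M_0$ with $g(a)=c\notin|M_1|$; thus $(a,M_0,N_0)\leq_{g}(c,M_1,N_1')$ witnesses the extension property, and undoing the reduction $f=\id$ recovers the statement for arbitrary $f$. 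I expect this last step---turning equality of Galois types into an honest embedding $g$ extending $f$ with $M_1\lea N_1'$ and $g(a)\notin|M_1|$---to be the only delicate point; the counting and the passage through the universal model $\ce$ are routine.
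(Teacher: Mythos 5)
Your proposal is correct and follows essentially the same route as the paper's proof: cut $R$ down to size $\lambda^+$, invoke Lemma \ref{monster} (the only place $\K_{\lambda^{+++}}=\emptyset$ and $\lambda^+$-categoricity are used) to get a universal $\ce$ containing $R$, embed $M_1$ into $\ce$ over $M_0$, use the pigeonhole $\lambda^+ > \lambda$ to find a realization $c$ of $p$ outside the copy of $M_1$, and copy back to produce $g \supseteq \id_{M_0}$ with $g(a)=c$. The only cosmetic difference is in the final packaging: the paper embeds $N_0$ into $\ce$ over $M_0$ with $a \mapsto c$ directly by universality and then copies an LST-hull of $h[M_1]\cup R''$ back over $h$, whereas you amalgamate $N_0$ with an LST-hull $N_1$ of $M_1 \cup \{c\}$ over $M_0$ (via $E_{at}$) and copy back along $k_1$ --- the same idea with slightly different bookkeeping.
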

\begin{proof}
We may assume $R \in \K_{\lambda^{+}}$ and by Lemma \ref{monster} there is $\ce \gea R$ universal above $\lambda$. Let $f: M_0 \rightarrow M_1$ , we may assume that $f=\id_{M_0}$. By universality there is $h: M_1 \rightarrow_{M_0} \ce$. Since $\| h[M_1] \| = \lambda$ then there is $c \in |R| \backslash  |h[M_1]|$ and $R'\lea R$ such that $(a, M_0, N_0) E_{at} (c, M_0, R')$. Then by definitions of $E_{at}$ and universality of $\ce$ there is $R'' \lea \ce$ and $g: N_0 \rightarrow_{M_0} R''$ such that $g(a)=c$.

Applying LST axiom to $h[M_1] \cup R''$ inside $\ce$ we get $S \lea \ce$. Let $S^{*} \gea M_1$ and $d: S^{*} \cong S$ such that $h \subseteq d$. Since $c\notin h[M_1]$ it follows that  $(d^{-1}(c), M_1, S^{*}) \in K_{\lambda}^3$  and one can show that $(a, M_0, N_0) \leq_{d^{-1} \circ g} (d^{-1}(c), M_1, S^{*})$ and $d^{-1} \circ g \supseteq \id_{M_0}$.

\end{proof}

The next step is to prove the extension property for minimal types, for that we use weak diamond principles. Weak diamonds were introduced (for $\lambda = \aleph_{0}$) by Devlin and Shelah in \cite{desh}.

\begin{defin}
Let $S\subseteq \lambda^+$ be a stationary set.  $\Phi_{\lambda^+}^{2}(S)$ holds if and only if $\forall F: (2^{\lambda}) ^{<\lambda^+} \rightarrow 2$  $\exists g: \lambda^+ \rightarrow 2$ such that $\forall f: \lambda^+ \rightarrow 2^{\lambda}$ the set $\{ \alpha \in S : F(f \upharpoonright_{\alpha}) = g(\alpha) \}$ is stationary. 
\end{defin}

The following facts will be used in the proof of Lemma \ref{diamond} and a proof of them can be found in \cite[\S 15]{ramibook}.
\begin{fact}
\
\begin{enumerate}
\item $2^{\lambda} < 2^{\lambda^+}$ if and only if $\Phi_{\lambda^+}^{2}(\lambda^+)$ holds.
\item $\Phi_{\lambda^+}^{2}(S)$ holds for a stationary set $S\subseteq \lambda^+$ if and only if $\forall F: (2 \times 2 \times \lambda^+) ^{<\lambda^+} \rightarrow 2$ $\exists g: \lambda^+ \rightarrow 2$ such that $\forall \eta \in 2^{\lambda^+} \forall \nu \in 2^{\lambda^+} \forall h: \lambda^+ \rightarrow \lambda^+$ the set $\{ \alpha \in S : F(\eta\upharpoonright_{\alpha} , \nu\upharpoonright_{\alpha}, h\upharpoonright_{\alpha}) = g(\alpha) \}$ is stationary.
\item If $\Phi_{\lambda^+}^{2}(\lambda^+)$ holds then there exists $\{ S_i \subseteq \lambda^+ : i < \lambda^+ \}$ pairwise disjoint stationary sets such that  $\Phi_{\lambda^+}^{2}(S_i)$ for each $i < \lambda^+$.
\end{enumerate}
\end{fact}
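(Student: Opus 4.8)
\emph{Proof proposal.} All three assertions are standard facts of combinatorial set theory, and my plan is to prove them by adapting the $\lambda=\aleph_0$ arguments of Devlin--Shelah \cite{desh} to an arbitrary infinite $\lambda$; the only arithmetic needed is the trivial $\lambda^+\le 2^\lambda$. The genuine mathematical content lives in (1), while (2) and (3) are combinatorial packaging. For the easy implication $\Phi_{\lambda^+}^2(\lambda^+)\Rightarrow 2^\lambda<2^{\lambda^+}$ in (1) I would prove the contrapositive: assuming $2^\lambda=2^{\lambda^+}$, fix a surjection $e\colon 2^\lambda\twoheadrightarrow {}^{\lambda^+}2$ (possible since $|{}^{\lambda^+}2|=2^{\lambda^+}=2^\lambda$) and define $F\colon (2^\lambda)^{<\lambda^+}\to 2$ by $F(f\upharpoonright_\alpha)=1-e(f(0))(\alpha)$ whenever $\alpha\ge 1$. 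Given any $g\colon\lambda^+\to 2$, choosing $f$ with $e(f(0))=g$ gives $F(f\upharpoonright_\alpha)\ne g(\alpha)$ for all $\alpha\ge 1$, so the guessing set lies in $\{0\}$ and is non-stationary; hence no $g$ works and $\Phi_{\lambda^+}^2(\lambda^+)$ fails.

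The substantial direction is $2^\lambda<2^{\lambda^+}\Rightarrow\Phi_{\lambda^+}^2(\lambda^+)$, which I would prove by contradiction via the Devlin--Shelah argument. If some $F$ defeats every $g$, then for each $g$ there are $f_g$ and a club $C_g$ with $F(f_g\upharpoonright_\alpha)=1-g(\alpha)$ on $C_g$; exploiting the \emph{locality} of $F$ (it reads only the initial segment $f\upharpoonright_\alpha$) one builds, by recursion on $\alpha<\lambda^+$, a branch-coherent tree $\langle f_\eta : \eta\in {}^{\le\lambda^+}2\rangle$ of functions into $2^\lambda$ along which $F$ reconstructs the branch on a club, and the resulting assignment injects ${}^{\lambda^+}2$ into a family of size $2^\lambda$, contradicting $2^\lambda<2^{\lambda^+}$. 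I expect this recursion, together with the attendant counting, to be the one real obstacle; since the paper already imports the weak-diamond background from \cite[\S 15]{ramibook}, I would cite that computation rather than reproduce it.

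For (2) I would give a coding argument. Fixing a bijection $2^\lambda\cong 2\times 2\times\lambda^+\times 2^\lambda$ (which exists since $\lambda^+\cdot 2^\lambda=2^\lambda$), one recodes a single test function $f\colon\lambda^+\to 2^\lambda$ as a triple $(\eta,\nu,h)$ with $\eta,\nu\in 2^{\lambda^+}$ and $h\colon\lambda^+\to\lambda^+$, and conversely, in a way computable from initial segments; transporting $F$ to $F'$ and back through this code, while keeping the guessing function $g$ fixed, yields the stated equivalence. The delicate point is arranging the recoding to respect restrictions, which I would again take from \cite[\S 15]{ramibook}.

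For (3) I would first observe that the recursion proving the hard direction of (1) never used $S=\lambda^+$: carried out inside an arbitrary stationary $S$ it shows that $2^\lambda<2^{\lambda^+}$ in fact yields $\Phi_{\lambda^+}^2(S)$ for \emph{every} stationary $S\subseteq\lambda^+$ (this is the standard form of the weak diamond). Then, since $\lambda^+$ is regular, Solovay's splitting theorem partitions $\lambda^+=\bigsqcup_{i<\lambda^+}S_i$ into $\lambda^+$ pairwise disjoint stationary sets, and the strengthened (1) gives $\Phi_{\lambda^+}^2(S_i)$ for every $i$, as required.
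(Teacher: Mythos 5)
Your treatment of items (1) and (2) is fine as far as it goes: the contrapositive argument for the easy half of (1) is correct, and for the Devlin--Shelah recursion and the restriction-respecting recoding you defer to \cite[\S 15]{ramibook}, which is in fact all the paper itself does (the Fact is stated with that citation and no proof). The genuine gap is in item (3). Your argument rests on the claim that the hard direction of (1) ``never used $S=\lambda^+$,'' so that $2^{\lambda}<2^{\lambda^{+}}$ already yields $\Phi_{\lambda^+}^{2}(S)$ for \emph{every} stationary $S\subseteq\lambda^+$. That claim is not a theorem of ZFC; it is consistently false, and this is precisely why (3) is formulated existentially. Already for $\lambda=\aleph_0$, Shelah's consistency results on ladder-system uniformization (from the Whitehead-problem papers, ``Whitehead groups may not be free, even assuming CH'') give models of CH --- hence of $2^{\aleph_0}<2^{\aleph_1}$ --- in which some stationary $S\subseteq\omega_1$ carries a ladder system $\langle \eta_\delta : \delta\in S\rangle$ with the $2$-colour uniformization property; from such a system one refutes $\Phi_{\aleph_1}^{2}(S)$ outright, by letting $F(f\upharpoonright_{\delta})$ be $1$ minus the eventual value of $\langle f(\eta_\delta(n)) : n<\omega\rangle$ and then, given any predictor $g$, uniformizing the constant colourings $n\mapsto g(\delta)$ to produce an $f$ that $F$ miscomputes at every $\delta\in S$. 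So the ideal of sets on which the weak diamond fails, while proper under $2^{\lambda}<2^{\lambda^{+}}$, may properly contain the nonstationary ideal: stationarity of the pieces of a Solovay partition gives no control whatsoever over whether $\Phi_{\lambda^+}^{2}$ holds on them, and your plan collapses at its first step.

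The correct route --- and, up to presentation, the one recorded in the source the paper cites --- is ideal-theoretic. Let $I=\{S\subseteq\lambda^+ : \Phi_{\lambda^+}^{2}(S) \text{ fails}\}$. One shows $I$ is a proper ideal containing all nonstationary sets and closed under unions of $\lambda$ many sets (in fact normal): given $S=\bigcup_{i<\lambda}S_i$ (which one may take disjoint) with witnesses $F_i$ to failure on each $S_i$, amalgamate them into a single colouring by letting a test function $f$ code a $\lambda$-sequence $\langle (f)_i : i<\lambda\rangle$ of test functions --- this is exactly where the $2^\lambda$-sized alphabet in the paper's definition of $\Phi_{\lambda^+}^{2}$ is needed, via $(2^\lambda)^\lambda=2^\lambda$ --- and setting $F(f\upharpoonright_{\alpha})=F_i((f)_i\upharpoonright_{\alpha})$ for $\alpha\in S_i$; intersecting the $\lambda$ many witnessing clubs then defeats every predictor on $S$. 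With completeness and properness in hand, an Ulam matrix $\langle A_{\xi,i} : \xi<\lambda,\ i<\lambda^+\rangle$ finishes: each column $\bigcup_{\xi<\lambda}A_{\xi,i}$ is co-bounded, so some entry $A_{\xi(i),i}$ is $I$-positive, and a pigeonhole on $\xi$ produces a row with $\lambda^+$ pairwise disjoint $I$-positive sets; these are automatically stationary (since $I$ contains the nonstationary sets) and $I$-positivity means exactly that $\Phi_{\lambda^+}^{2}$ holds on them. Note that this use of Ulam matrices for the ideal $I$ cannot be replaced by Solovay splitting, which is a positivity statement for the club filter only.
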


The lemma below is presented precisely in the way it will be used in the proof of Theorem \ref{ext}. It is similar to \cite[1.6(1)]{sh576}, but our assumptions and conclusions are weaker. 
\begin{lemma} \label{diamond}
Suppose $2^{\lambda} < 2^{\lambda^{+}}$. Let $\{ M_{\eta} : \eta \in 2^{\lambda^+} \}$ such that for each $\eta \in 2^{\lambda^+}$ :
\begin{enumerate}
\item $\{ M_{\eta\upharpoonright_{\alpha}} : \alpha < \lambda^+\}$ strictly increasing and continuous.
\item For all $\alpha < \lambda^+( M_{\eta\upharpoonright_{\alpha}} \in \K_\lambda)$. 
\end{enumerate}
If for every $\eta \in 2^{\lambda^+}$ and $\alpha < \lambda^+$ $M_{\eta\upharpoonright_{\alpha}^{\wedge}0}$ can not be embedded to $M_{\nu}$ over $M_{\eta\upharpoonright_{\alpha}}$ when $\eta\upharpoonright_{\alpha}^{\wedge}1 < \nu$ and $\nu \in 2^{<\lambda^+}$, then $\K$ is not $\lambda^{+}$-categorical.  
\end{lemma}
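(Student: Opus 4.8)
The plan is to argue by contradiction: assume $\K$ is $\lambda^+$-categorical and derive a contradiction from the non-embeddability hypothesis using the weak diamond $\Phi^2_{\lambda^+}(\lambda^+)$, which holds since $2^\lambda < 2^{\lambda^+}$. First I would normalize the coding of the tree. Since along each branch the chain $\{M_{\eta\upharpoonright_{\alpha}} : \alpha < \lambda^+\}$ is strictly increasing, continuous, and consists of models of size $\lambda$, I can rename so that the universe of $M_s$ for $s$ of length $\alpha$ is a fixed ordinal $\delta_\alpha < \lambda^+$, where $\langle \delta_\alpha : \alpha < \lambda^+\rangle$ is increasing, continuous, cofinal in $\lambda^+$, and each $\delta_\alpha$ has cardinality $\lambda$. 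Then for every branch $\eta \in 2^{\lambda^+}$ the union $M_\eta := \bigcup_{\alpha < \lambda^+} M_{\eta\upharpoonright_{\alpha}}$ is an $L(\K)$-structure with universe $\lambda^+$ satisfying $M_{\eta\upharpoonright_{\alpha}} = M_\eta\upharpoonright\delta_\alpha$, and any $\K$-isomorphism between two branch models is a bijection of $\lambda^+$ whose initial segments can be fed to a weak diamond coloring.

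The engine is a prediction argument. Using the reformulation of $\Phi^2_{\lambda^+}(\lambda^+)$ on $(2\times 2\times\lambda^+)^{<\lambda^+}$, I would define a coloring $F : (2\times 2\times\lambda^+)^{<\lambda^+}\to 2$ that reads an initial segment as a triple $(\eta\upharpoonright_{\alpha}, \nu\upharpoonright_{\alpha}, h\upharpoonright_{\alpha})$, interprets $h\upharpoonright\delta_\alpha$ as a candidate isomorphism $\psi$ from $M_{\eta\upharpoonright_{\alpha}}$ onto $M_{\nu\upharpoonright_{\alpha}}$, and, when $\psi$ really is such an isomorphism, outputs the bit recording which child of the splitting node is compatible with $\psi$ being extended one more level. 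Let $g : \lambda^+ \to 2$ be the function given by the weak diamond for this $F$. I would then build two branches $\eta, \nu \in 2^{\lambda^+}$ by recursion on $\alpha$, keeping them equal below each relevant stage and forcing a split governed by $g(\alpha)$ at the stages where $g$ instructs, always selecting the branch values so as to pick the child that is forbidden by the non-embeddability hypothesis.

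To close the argument I would invoke categoricity to fix a $\K$-isomorphism $f : M_\eta \cong M_\nu$, code it as $h : \lambda^+\to\lambda^+$, and observe that there is a club of $\alpha < \lambda^+$ at which $\delta_\alpha$ is closed under $f$ and $f^{-1}$, so that $f\upharpoonright\delta_\alpha$ is an isomorphism $M_{\eta\upharpoonright_{\alpha}}\cong M_{\nu\upharpoonright_{\alpha}}$; by continuity this persists one level up. By the genericity of $g$ the set of $\alpha$ with $F(\eta\upharpoonright_{\alpha},\nu\upharpoonright_{\alpha},h\upharpoonright_{\alpha})=g(\alpha)$ is stationary, hence meets this club. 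At such a stage $\alpha$, where $\eta\upharpoonright_{\alpha} = \nu\upharpoonright_{\alpha} =: s$ and the split chosen there makes $\eta$ take value $0$ and $\nu$ take value $1$, the map $f\upharpoonright M_{s^{\wedge}0}$ has image of size $\lambda$, hence lands in some node $M_{\nu\upharpoonright_{\beta}}$ with $s^{\wedge}1$ a proper initial segment of $\nu\upharpoonright_{\beta}$; this yields a $\K$-embedding of $M_{s^{\wedge}0}$ into $M_{\nu\upharpoonright_{\beta}}$ whose restriction to $M_s$ is $\psi$. Once the construction has been arranged so that $\psi$ is the identity on $M_s$ at caught stages, this embedding is over $M_s = M_{\eta\upharpoonright_{\alpha}}$ and contradicts the non-embeddability hypothesis.

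The main obstacle, and the reason the coloring $F$ must genuinely consult the guessed map $h\upharpoonright\delta_\alpha$, is that the isomorphism $f$ supplied by categoricity restricts on the base $M_s$ to an automorphism $\psi$ rather than to the identity, whereas the non-embeddability hypothesis only forbids embeddings over $M_s$, i.e.\ fixing $M_s$ pointwise. The role of the weak diamond is precisely to predict $\psi = f\upharpoonright\delta_\alpha$ at stationarily many stages and to choose the labeling of the branch models (equivalently, the split) so that at a caught stage the embedding extracted from $f$ becomes the identity on $M_s$; neutralizing this base automorphism is the delicate point, and everything else is routine bookkeeping with direct limits and clubs.
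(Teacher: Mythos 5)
Your overall strategy (normalize universes to ordinals, use the $(2\times 2\times\lambda^{+})^{<\lambda^{+}}$ form of the weak diamond, and extract a forbidden embedding from a purported isomorphism at a caught stage) is the right one, and your diagnosis of the base-automorphism issue is accurate. But the way you deploy the prediction has a genuine gap. You build exactly two branches $\eta,\nu\in 2^{\lambda^{+}}$, and your final contradiction is extracted at a caught stage $\alpha$ satisfying $\eta\upharpoonright_{\alpha}=\nu\upharpoonright_{\alpha}=s$ with $\eta(\alpha)=0$, $\nu(\alpha)=1$. Two distinct branches of $2^{\lambda^{+}}$ have exactly one splitting point, so there is at most one such $\alpha$; your phrase ``keeping them equal below each relevant stage and forcing a split at the stages where $g$ instructs'' is not realizable, since after the first split the branches never agree again. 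The weak diamond only guarantees that the set of caught stages for the triple $(\eta,\nu,h)$ is \emph{stationary}; a stationary set need not contain any single prescribed ordinal, so nothing forces the unique splitting stage to be caught, and the contradiction never materializes. (Note also that $h$ only exists after categoricity is invoked, i.e.\ after both branches are fixed, so you cannot arrange by recursion that the split occurs at a caught stage.)

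The paper's proof avoids this by making the two compared branches disagree at \emph{stationarily many} stages and by designing the coloring so that a contradiction arises at \emph{every} caught stage, with no requirement that the branches agree below it. Concretely, $F(\eta\upharpoonright_{\delta},\nu\upharpoonright_{\delta},h\upharpoonright_{\delta})=1$ iff $h\upharpoonright_{\delta}$ extends to an isomorphism of some $M_{\eta_0}$ with $\eta_0>(\eta\upharpoonright_{\delta})^{\wedge}0$ onto a model indexed above $\nu\upharpoonright_{\delta}$ on the all-zero side (so that its image lies inside the all-zero branch model and $h^{-1}$ can be applied to it). One branch is \emph{defined} to follow the predictors, $\eta_X(\delta)=g_i(\delta)$ for $\delta\in S_i$, $i \in X$, and it is compared against the all-zero branch $\eta_{\emptyset}$. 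At any caught $\delta$ in a suitable club: if $\eta_X(\delta)=1$, the predicted extension $g\supseteq h\upharpoonright_{\delta}$ composed with $h^{-1}$ gives an embedding of $M_{(\eta_X\upharpoonright_{\delta})^{\wedge}0}$ into $M_{\eta_X}$, hence into some $M_{\eta_X\upharpoonright_{\alpha}}$ with $\eta_X\upharpoonright_{\alpha}>(\eta_X\upharpoonright_{\delta})^{\wedge}1$, and this embedding is over $M_{\eta_X\upharpoonright_{\delta}}$ precisely because both $g$ and $h$ restrict to $h\upharpoonright_{\delta}$ there --- this composition, not any relabeling of the models, is what neutralizes the base automorphism you worried about; if instead $\eta_X(\delta)=0$, then $h$ itself witnesses $F=1\neq 0=g_i(\delta)=\eta_X(\delta)$, contradicting that $\delta$ is caught. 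Either way a contradiction appears at every caught stage, so bare stationarity suffices, which is exactly what your two-branch, single-split architecture cannot exploit.
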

\begin{proof}
We may assume that for all $\nu \in 2^{<\lambda^+}$ $(|M_{\nu}|= \gamma_{\eta} \in \lambda^{+})$, for every $\eta \in 2^{\lambda^+} (\{ \gamma_{\eta\upharpoonright_{\alpha}} : \alpha < \lambda^+ \}$ is continuous) and in that case $\forall \eta \in 2^{\lambda^+}( |M_{\eta}|=\lambda^+)$.

For each $\delta \in \lambda^{+}$, $\eta \in 2^{\delta}$, $\nu \in 2^{\delta}$ and $h: \delta \rightarrow \delta$ define:

\begin{equation*}
F(\eta, \nu, h) = \begin{cases}
1 \quad \text{$|M_{\eta}|=|M_{\nu}|=\delta$ and $h: M_{\eta} \rightarrow M_{\nu}$ can be extended to an isomorphism from} \\
\qquad \text{$M_{\eta_{0}}$ to $M_{\bar{0}}$ where $\eta^\wedge 0< \eta_{0}$ and $\nu < \bar{0}$} \\

0 \quad \text{otherwise}
\end{cases}
\end{equation*}

Let $\{ S_i \subseteq \lambda^+ : i < \lambda^+ \}$ pairwise disjoint stationary sets such that  $\Phi_{\lambda^+}^{2}(S_i)$ for each $i < \lambda^+$, they exist by the previous fact.

By $\Phi_{\lambda^+}^{2}(S_i)$ for all $i < \lambda^+$ let $g_i: \lambda^{+} \rightarrow  2$ such that for any $\eta, \nu \in 2^{\lambda^+}$ and $h: \lambda^+ \rightarrow \lambda^+$ the following set is stationary:
\[ S_{i}^{*} = \{ \delta \in S_i : F(\eta\upharpoonright_{\delta}, \nu\upharpoonright_{\delta}, h\upharpoonright_{\delta})= g_i(\delta) \} \]

Now, given $X \subseteq \lambda^{+}$ we define $\eta_{X}: \lambda^+ \rightarrow 2$ as follows:
\begin{equation*}
\eta_{X}(\delta) = \begin{cases}
g_{i}(\delta) & \text{if $\exists i \in X(\delta \in S_i)$ } \\
0 &\text{otherwise}
\end{cases}
\end{equation*}

Observe that since $\{S_i : i <\lambda^+ \}$ are pairwise disjoint for each $X$ $\eta_{X}$is well-defined.

\fbox{Claim:} If $X \subseteq \lambda^+$ and $X \neq \emptyset$, then $M_{\eta_{X}} \ncong M_{\eta_{\emptyset}}$. 

Suppose $h: M_{\eta_{X}} \cong M_{\eta_{\emptyset}}$. Observe that $\eta_{\emptyset} = \bar{0}$. Let $i \in X$ and $S_{i}^{*}= \{ \delta \in S_i : F(\eta_{X}\upharpoonright_{\delta}, \bar{0}\upharpoonright_{\delta}, h\upharpoonright_{\delta})= g_i(\delta) \}$ be the stationary set obtained for $\eta_{X}, \bar{0}$ and $h$.

 Let $C_{\eta_X}=\{ \delta < \lambda^+ : |M_{\eta_X\upharpoonright_{\delta}}| = \delta \}$, $C_{\bar{0}}=\{ \delta < \lambda^+ : |M_{\bar{0}\upharpoonright_{\delta}}| = \delta \}$ and $D= \{ \delta < \lambda^+ : h\upharpoonright_{\delta}: \delta \rightarrow \delta\}$. Since they are all clubs we can pick $\delta \in C_{\eta_X} \cap C_{\bar{0}} \cap D \cap S_i^{*}$. Define $\eta := \eta_{X} \upharpoonright_{\delta}$ and $\nu=\bar{0}\upharpoonright_{\delta}$. There are two cases:

\begin{enumerate}
\item \underline{Case 1:} $\eta_X(\delta)=1$. Since $\delta \in S_{i}^{*}$  we have that $g_{i}(\delta) = F(\eta, \nu, h\upharpoonright_{\delta})$ and since $i \in X$ we have that  $\eta_X(\delta)=g_{i}(\delta)$. Hence $ F(\eta, \nu, h\upharpoonright_{\delta})=1$. Then by definition there is $g \supseteq h\upharpoonright_{\delta}$ and $\eta_{0} > \eta^{\wedge}0$ such that $g: M_{\eta_0}\cong M_{\bar{0}}$. 

By hypothesis $h: M_{\eta_X} \cong M_{\bar{0}}$, so consider $f := h^{-1} \circ g: M_{\eta^{\wedge}0} \rightarrow M_{\eta_{X}}$. Since $\{ M_{\eta_{X}\upharpoonright_{\alpha}} : \alpha < \lambda^+\}$ is strictly increasing and continuous there is $\alpha < \lambda^+$ such that $f[M_{\eta^{\wedge}0}] \subseteq M_{\eta_{X}\upharpoonright_{\alpha}}$, so $f: M_{\eta^{\wedge}0} \rightarrow M_{\eta_X\upharpoonright_{\alpha}}$. Moreover $\eta_X\upharpoonright_{\alpha} > \eta^{\wedge}1$ and $M_{\eta}$ is fixed under $f$; contradicting our hypothesis.

\item \underline{Case 2:} $\eta_X(\delta)=0$. Then observe that $h \supseteq h\upharpoonright_{\delta}: M_{\eta_x} \rightarrow M_{\bar{0}}$ where $\eta_X > \eta^{\wedge}0$. So $F(\eta, \nu, h\upharpoonright_{\delta})=1$. Since $\delta \in S_i^{*}$ it follows that $\eta_X(\delta)= 1$. A contradiction to the hypothesis of this case.  $\dagger_{\text{Claim}}$ 

\end{enumerate}

Therefore, $\K$ is not $\lambda^+$-categorical. 
\end{proof}

We recall one last definition before we tackle Theorem \ref{ext}.

\begin{defin}\
\begin{itemize}
\item Given $p=\gtp(a/M, N) \in \gS(M)$ and $f: M \cong R$ define $f(p) :=\gtp(f'(a)/ R, f'[N])$ such that $f': N \cong f'[N]$ and $f' \supseteq f$.
\item Let $p=\gtp(a/M, N) \in \gS(M)$ and $R \in \K_{\lambda}$ then $S_{p}(R):=\{f(p) : f: M \cong R \}$. 
\end{itemize}
\end{defin}

Observe that if $M$ and $R$ are not isomorphic then $S_{p}(R) = \emptyset$, but in this paper when we refer to this notion, we will always assume categoricity in $\lambda$. Hence it will always be not empty. We will use the following lemma.

\begin{lemma}\label{bijectionS-p}
Let $p=\gtp(a/M, N)\in \gS^{na}(M)$. If $M\cong_g R$  then $|S_{p}(M)| = |S_{p}(R)|$. 
\end{lemma}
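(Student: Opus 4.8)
The plan is to exploit the push-forward action of isomorphisms on Galois types. For any isomorphism $g : M' \cong M''$ and any $q \in \gS(M')$, the definition of $f(p)$ applied to $g$ produces a type $g(q) \in \gS(M'')$, and this assignment has two standard properties coming from the invariance of Galois types under isomorphism: it is independent of the choice of the extension $g'$ used to define it (so $g(q)$ is well defined and $\id_{M'}(q) = q$), and it is functorial, i.e. $g_2(g_1(q)) = (g_2 \circ g_1)(q)$ whenever $g_1 : M_1 \cong M_2$, $g_2 : M_2 \cong M_3$ and $q \in \gS(M_1)$. I would record these two facts first; each is immediate from the fact that the relation $E_{at}$ (hence its transitive closure $E$) is preserved by isomorphisms and that isomorphisms compose.

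Granting this, the map $\Phi : \gS(M) \to \gS(R)$ given by $\Phi(q) = g(q)$ is a bijection: by functoriality and $\id_M(q)=q$ its inverse is $q \mapsto g^{-1}(q)$, since $g^{-1}(g(q)) = (g^{-1}\circ g)(q) = \id_M(q) = q$ and symmetrically $g(g^{-1}(q))=q$. It therefore suffices to show that $\Phi$ carries $S_p(M)$ onto $S_p(R)$; injectivity on $S_p(M)$ is then inherited from injectivity of $\Phi$ on all of $\gS(M)$.

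For the image computation I would argue as follows. An arbitrary element of $S_p(M)$ has the form $f(p)$ with $f : M \cong M$; by functoriality $\Phi(f(p)) = g(f(p)) = (g \circ f)(p)$, and $g \circ f : M \cong R$, so $\Phi(f(p)) \in S_p(R)$. Conversely, any element of $S_p(R)$ has the form $\phi(p)$ with $\phi : M \cong R$; writing $\phi = g \circ (g^{-1} \circ \phi)$ and noting that $g^{-1} \circ \phi : M \cong M$, functoriality gives $\phi(p) = g\big((g^{-1}\circ\phi)(p)\big) = \Phi\big((g^{-1}\circ\phi)(p)\big)$ with $(g^{-1}\circ\phi)(p) \in S_p(M)$. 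Hence $\Phi[S_p(M)] = S_p(R)$, and since $\Phi$ is injective this yields $|S_p(M)| = |S_p(R)|$.

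The only point requiring care — and the place I would expect to spend any real effort — is the functoriality and well-definedness of the push-forward, since $g(q)$ is defined via a chosen extension $g'$ of $g$ to the ambient model, and one must check that the resulting Galois type is independent of that choice and composes correctly. This is precisely the standard invariance of Galois types under isomorphism; it needs only the ability to copy a model along a partial isomorphism and not amalgamation, so no hypothesis beyond the AEC axioms enters (in particular nonalgebraicity of $p$ is irrelevant to the counting).
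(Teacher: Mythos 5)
Your proposal is correct and is essentially the paper's proof: the paper defines the same map, sending $f(p)$ (for $f : M \cong M$) to the type obtained by copying along $g$, i.e.\ $(g \circ f)(p)$, and asserts it is a bijection. You simply make explicit the routine verifications the paper dismisses as ``easy to see'' --- well-definedness and functoriality of the push-forward $q \mapsto g(q)$, and the resulting inverse $q \mapsto g^{-1}(q)$ --- which is exactly the right way to fill that gap.
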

\begin{proof}
Define $\Phi: S_p(M) \rightarrow S_p(R)$, by $\Phi(tp(f'(a)/M, f'[N]))=tp(g_{f}\circ f' (a)/R, g_{f} \circ f' [N])$ such that $f': N \cong f'[N]$ and $f' \supseteq f$ where $f: M \cong M$ and the following square commutes:

\[
 \xymatrix{\ar @{} [dr] f'[N]  \ar[r]^{\cong_{g_{f}}}  & g_f  \circ f' [N] \\
M \ar[u]^{\id} \ar[r]^{\cong_{g}} & R  \ar[u]_{\id}
 }
\]

It is easy to see that $\Phi$ is a bijection.
\end{proof}

The next theorem is \cite[2.13]{sh576}. Our proof is similar to that of Shelah, but we show that Lemma \ref{diamond} is enough. 

\begin{theorem}\label{ext}
Suppose $2^{\lambda}< 2^{\lambda^{+}} < 2^{\lambda^{++}}$ and $2^{\lambda^{+}} > \lambda^{++}$. Assume $(\ast)_\lambda$ and $\K_{\lambda^{+++}} = \emptyset$. If $(a, M, N)\in K_{\lambda}^3$ is minimal then it has the extension property.
\end{theorem}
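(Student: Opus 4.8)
The plan is to argue by contradiction, deriving from the failure of the extension property a failure of $\lambda^+$-categoricity via Lemma \ref{diamond}, contradicting $\Ii(\K,\lambda^+)=1$ in $(\ast)_\lambda$. Write $p=\gtp(a/M,N)$ and assume the minimal $(a,M,N)$ lacks the extension property. I first collect the consequences. From the contrapositive of Lemma \ref{manyrealizations}, every $R\gea M$ in $\K_{\lambda^+}$ realizes $p$ fewer than $\lambda^+$ times. Minimality (through Lemma \ref{extendminimal}, using $\lambda$-amalgamation, available by Remark \ref{AP2}) gives that $p$ has at most one non-algebraic extension to any larger base, and that \emph{death propagates upward}: if $p$ has no non-algebraic extension to some $M'\gea M$ then it has none to any $M''\gea M'$, since a realization of $p$ lying outside $M''$ would restrict to one lying outside $M'$. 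By Fact \ref{easyext} the failure of the extension property is inherited by the unique non-algebraic extension of $p$ to any larger base over which it survives. Finally, $\lambda$-categoricity lets me \emph{regenerate} the situation at every model: for $M_\beta\in\K_\lambda$ and an isomorphism $g\colon M\cong M_\beta$, the type $p_\beta:=g(p)\in\gS^{na}(M_\beta)$ is again minimal, realized, and fails the extension property, all three being invariant under isomorphism.

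With these in hand I would build a tree $\langle M_\eta:\eta\in 2^{<\lambda^+}\rangle$ of members of $\K_\lambda$, $\lea$-increasing and continuous along every branch, with $M_{\langle\rangle}=M$, so that clauses (1) and (2) of Lemma \ref{diamond} hold (strict increase because each successor properly enlarges the model, continuity because limits are unions). The work is entirely in the successor step, where I must force clause (3): $M_{\beta^{\wedge}0}$ is not $\K$-embeddable over $M_\beta$ into any $M_\nu$ with $\beta^{\wedge}1<\nu$. Fixing the regenerated minimal type $p_\beta$ over $M_\beta$, I would take $M_{\beta^{\wedge}0}\gea M_\beta$ realizing $p_\beta$ by a new element $c_\beta$, and take $M_{\beta^{\wedge}1}\gea M_\beta$ to be a \emph{capturing, type-omitting} extension: $p_\beta$ is dead over $M_{\beta^{\wedge}1}$ and is not realized inside $M_{\beta^{\wedge}1}$, so that $p_\beta$ has no realization in any $M''\gea M_{\beta^{\wedge}1}$. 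Granting such $M_{\beta^{\wedge}1}$, clause (3) is immediate: if $j\colon M_{\beta^{\wedge}0}\to M_\nu$ were a $\K$-embedding over $M_\beta$ with $M_{\beta^{\wedge}1}\lea M_\nu$, then since $j$ fixes $M_\beta$ the element $j(c_\beta)$ realizes $p_\beta$ over $M_\beta$ inside $M_\nu$; but $M_\nu\gea M_{\beta^{\wedge}1}$ omits $p_\beta$, a contradiction. Lemma \ref{diamond} then yields that $\K$ is not $\lambda^+$-categorical, the desired contradiction.

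The main obstacle is precisely the construction of the capturing, type-omitting successor $M_{\beta^{\wedge}1}$: I must kill the minimal type $p_\beta$ (so that no extension can ever realize it) without realizing it in the process. This is where I expect the real effort, combining the inherited failure of the extension property (which supplies some extension over which $p_\beta$ is dead) with minimality (to control the unique non-algebraic extension) and with the universal model of Lemma \ref{monster} together with the realization bound from Lemma \ref{manyrealizations} (to separate out, rather than absorb, the realizations of $p_\beta$); amalgamation and Fact \ref{wep} are used throughout to perform the extensions. The regeneration via $\lambda$-categoricity is what makes the construction survive all $\lambda^+$ levels: because the realizations added along $0$-branches are realizations of the locally transported type $p_\beta$ rather than of the single type $p$, they do not accumulate into $\lambda^+$ realizations of $p$ and hence do not collide with the bound of Lemma \ref{manyrealizations}.
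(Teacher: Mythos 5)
Your overall frame (contradiction via Lemma \ref{diamond}, regenerating the type over each node by $\lambda$-categoricity, transporting a realization along an embedding to violate omission) is the same as the paper's, but the object your successor step hinges on cannot exist, so the argument collapses exactly at the point you defer as ``the real effort''. You want $M_{\beta^{\wedge}1}\gea M_\beta$ such that $p_\beta$ is dead over $M_{\beta^{\wedge}1}$ \emph{and} not realized inside it, so that no extension of $M_{\beta^{\wedge}1}$ realizes $p_\beta$. Under $\lambda$-amalgamation (Remark \ref{AP2}) this conjunction is unsatisfiable. Fix $N_\beta\gea M_\beta$ and $a_\beta\in |N_\beta|\backslash |M_\beta|$ realizing $p_\beta$, and let $M''\gea M_\beta$ be arbitrary. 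Amalgamating $N_\beta$ and $M''$ over $M_\beta$ gives $N^*\gea M''$ and $f\colon N_\beta\rightarrow_{M_\beta}N^*$, and by invariance $\gtp(f(a_\beta)/M_\beta,N^*)=p_\beta$ with $f(a_\beta)\notin M_\beta$. So \emph{every} $M''\gea M_\beta$ has a further extension realizing $p_\beta$; in particular, if $p_\beta$ is dead over $M''$, then $f(a_\beta)$ must lie inside $M''$ (otherwise $\gtp(f(a_\beta)/M'',N^*)$ would be a non-algebraic extension of $p_\beta$), i.e.\ death \emph{forces} realization. Hence a ``capturing, type-omitting'' extension does not exist, and your verification of the non-embeddability hypothesis of Lemma \ref{diamond} has nothing to stand on: omission of $p_\beta$ in the later models $M_\nu$ can never be a consequence of a property of $M_{\beta^{\wedge}1}$ alone, but must be actively maintained at every subsequent successor and limit stage of the tree.

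This impossibility is precisely why the paper's proof is structured differently. It first arranges (Fact \ref{reduce}, Lemma \ref{int1}) that $(a,M,N)$ is reduced and that the family $S_p(M)$ of isomorphic copies of $p$ has size $\geq\lambda^{++}$, and then proves an omitting-realization lemma (Lemma \ref{v-pair}): over any $R$, given at most $\lambda^+$ forbidden copies of $p$, there are $\lambda^{++}$ copies realizable in an extension of $R$ that adds no realization of any forbidden copy (using that each copy, failing the extension property, has at most $\lambda$ realizations in the universal $\ce$ of Lemma \ref{monster}). At each node $\eta$ the paper picks, by a pigeonhole over $\lambda^{++}$ candidates, \emph{two distinct} copies $p^0_\eta\neq p^1_\eta\in S_p(M_\eta)$, realizes $p^l_\eta$ in $M_{\eta^{\wedge}l}$, and then re-applies Lemma \ref{v-pair} at every later successor (omission passes through unions at limits) so that each individual $M_\nu$ with $\nu>\eta^{\wedge}l$ omits $p^{1-l}_\eta$. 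The non-embeddability needed for Lemma \ref{diamond} is thus between $M_{\eta^{\wedge}0}$ and the specific models of the tree, never all extensions of $M_{\eta^{\wedge}1}$. Your single-type, ``kill it once and for all'' shortcut cannot be repaired without reintroducing exactly this machinery: with one copy per node you have no way to realize the regenerated type at a descendant of $\eta^{\wedge}1$ while still omitting $p_\eta$, which is what the large family $S_p$ and Lemma \ref{v-pair} are for. Those two lemmas, absent from your plan, are the substance of the paper's proof.
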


Since the proof is very long we have divided it into three lemmas.

\begin{lemma}\label{ext-pp} Under the hypothesis of Theorem \ref{ext}.
Let $p=\gtp(a/M, N)\in \gS^{na}(M)$ such that $p$ does not have the extension property. If $q$ extends $p$ then $q$ has less than $\lambda^+$ realizations.
\end{lemma}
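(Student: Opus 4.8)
The plan is to prove the contrapositive. Working under the standing hypothesis that $p = \gtp(a/M,N)$ has no extension property, I want to show that no $q$ extending $p$ can have $\lambda^+$-many realizations. The two ingredients are Lemma \ref{manyrealizations}, which converts ``many realizations'' into the extension property, and Fact \ref{easyext}, which lets the extension property descend along the order $\leq$ on $K_{\lambda}^{3}$.

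First I would dispose of the degenerate case. If $q$ is algebraic then it has a unique realization (two elements with the same algebraic Galois type over a model must coincide, using the $\lambda$-amalgamation property), so it trivially has fewer than $\lambda^+$ realizations and there is nothing to prove. Hence I may assume $q \in \gS^{na}(M')$ for some $M' \gea M$ with $q\upharpoonright_{M} = p$, and fix a witness $(b, M', N') \in K_{\lambda}^{3}$ with $q = \gtp(b/M', N')$.

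Now suppose toward a contradiction that $q$ has at least $\lambda^+$ realizations, i.e.\ that there is $R \gea M'$ with $|\{ c \in R : c \text{ realizes } q \}| \geq \lambda^{+}$. Applying Lemma \ref{manyrealizations} to $(b, M', N')$ and $R$ yields that $(b, M', N')$ has the extension property. The crucial observation is that $(b, M, N') \in K_{\lambda}^{3}$ and $(b, M, N') \leq (b, M', N')$: indeed $M \lea M' \lea N'$, the distinguished element is the same $b$, and $\gtp(b/M, N') = q\upharpoonright_{M} = p$ is nonalgebraic, so $b \notin |M|$. Fact \ref{easyext} then pushes the extension property down to $(b, M, N')$, and since $p = \gtp(b/M, N')$ this says precisely that $p$ has the extension property, contradicting the hypothesis.

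I do not anticipate a genuine obstacle here: all the real work has already been carried out in Lemma \ref{manyrealizations} and Fact \ref{easyext}, and the argument is essentially their correct assembly. The only point that needs care is the verification that the two pre-types share the same distinguished element $b$, so that Fact \ref{easyext} is applicable; this is exactly why it is convenient to read off $p$ from the witness $(b, M', N')$ of $q$ rather than from the original witness $(a, M, N)$.
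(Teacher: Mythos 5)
Your proof is correct, and its engine is the same as the paper's: Lemma \ref{manyrealizations} applied under the standing hypotheses of Theorem \ref{ext} (note that $(\ast)_\lambda$ together with Remark \ref{AP2} does supply the amalgamation and $\lambda^+$-categoricity that Lemma \ref{manyrealizations} needs, as you implicitly assume). The paper's proof is a single line --- ``Follows from Lemma \ref{manyrealizations}'' --- and the assembly it most plausibly has in mind is slightly shorter than yours: if $c$ realizes $q$ in some $R \gea M'$, then restricting types gives $\gtp(c/M,R) = q\upharpoonright_M = p$, so $c$ already realizes $p$; hence $\lambda^+$-many realizations of $q$ yield $\lambda^+$-many realizations of $p$ in $R$, and Lemma \ref{manyrealizations} applied directly to the original witness $(a,M,N)$ of $p$ gives that $p$ has the extension property, a contradiction. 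That route needs neither Fact \ref{easyext} nor the algebraic-case discussion, since it is insensitive to whether $q$ is algebraic. Your detour --- applying Lemma \ref{manyrealizations} to a witness $(b,M',N')$ of $q$ and then descending to $(b,M,N')$ via Fact \ref{easyext} --- is a correct alternative, and the two points you flag as needing care (that $b \notin |M|$ because $p$ is nonalgebraic, so $(b,M,N') \in K_{\lambda}^{3}$ and the descent is legitimate, and that algebraic $q$ must be disposed of separately so a witness in $K_{\lambda}^{3}$ exists at all) are exactly what that version requires; both are handled correctly in your write-up.
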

\begin{proof}
Follows from Lemma \ref{manyrealizations}.
\end{proof}

\begin{lemma} \label{int1} Under the hypothesis of Theorem \ref{ext}. Let $p=\gtp(a/M, N)\in \gS^{na}(M)$ such that $(a, M, N)$ is reduced, minimal and does not have the extension property. Then there is a reduced pre-type $(a, M' ,N' )  \geq (a, M, N)$ such that $|S_{tp(a/M', N')}(M')| \geq \lambda^{++}$.
\end{lemma}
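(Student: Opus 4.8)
The plan is to realise all the conjugates of a single reduced minimal type inside one universal model and to count them there. First I would fix, by the moreover part of Lemma~\ref{monster}, a model $\ce\in\K_{\lambda^{++}}$ universal above $\lambda$ with $N\lea\ce$; since $\lambda^{++}$ is regular and $\ce$ is universal above $\lambda$, a back-and-forth of length $\lambda^{++}$ shows $\ce$ is model-homogeneous for its submodels of size $<\lambda^{++}$, so every $\K$-embedding between two $\lambda$-sized submodels of $\ce$ extends to an automorphism of $\ce$. Because reduced pre-types are dense (Fact~\ref{reduce}) while minimality and the failure of the extension property pass to larger pre-types (Lemma~\ref{extendminimal}(1) and Fact~\ref{easyext}), I may choose a reduced minimal $(a,M',N')\geq(a,M,N)$, still without the extension property, with $M'\lea N'\lea\ce$; put $p'=\gtp(a/M',N')$. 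It suffices to prove $|S_{p'}(M')|\geq\lambda^{++}$, and by Lemma~\ref{bijectionS-p} this quantity does not depend on the chosen representative of $M'$.

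I would then reformulate the orbit $S_{p'}(M')$ model-theoretically. Each $f\in\aut M'$ extends to $\widehat f\in\aut\ce$, and $\widehat f(a)$ realises $f(p')$ over $\widehat f[M']=M'$; conversely any realisation in $\ce$ of a conjugate of $p'$ arises this way. Thus $|S_{p'}(M')|$ is the number of distinct Galois types over $M'$ realised by the elements of the orbit of $a$ under the setwise stabiliser of $M'$ in $\aut\ce$. Moreover, since $p'$ extends $p$ and $p$ has no extension property, Lemma~\ref{ext-pp} shows that $p'$, and hence each of its conjugates, is realised by fewer than $\lambda^+$ elements of $\ce$.

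The core is to show that this orbit spreads across $\lambda^{++}$ distinct types over $M'$. I would construct by induction on $\xi<\lambda^{++}$ elements $c_\xi\in\ce$ and copies $(c_\xi,M',N'_\xi)\cong(a,M',N')$ inside $\ce$ witnessing that each $\gtp(c_\xi/M')\in S_{p'}(M')$, while keeping $\gtp(c_\xi/M')\neq\gtp(c_\eta/M')$ for $\eta<\xi$. At stage $\xi$ at most $|\xi|\cdot\lambda^+<\lambda^{++}$ elements of $\ce$ carry one of the conjugates already listed, so the $\lambda^{++}$ elements of $\ce$ leave room; using minimality to control the unique nonalgebraic extension of a conjugate together with the failure of the extension property—which prevents all these conjugates from being realised compatibly and so forces a previously unseen type to appear over a suitable enlargement of $M'$ and then, by reducedness, over $M'$ itself—I would produce the fresh $c_\xi$. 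This yields $\lambda^{++}$ pairwise distinct members of $S_{p'}(M')$.

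The main obstacle is exactly this freshness step: turning the bare failure of the extension property into the positive statement that at each of the $\lambda^{++}$ stages a genuinely new conjugate type over the fixed $M'$ must occur. The delicate points are that the witnessing automorphisms must preserve $M'$ setwise (so the transported types are conjugates of $p'$ over $M'$ and not merely over some isomorphic copy), and that reducedness is what lets one descend a freshly created type over an enlargement back to an inequivalent type over $M'$; minimality, reducedness, homogeneity, and the regular cardinal $\lambda^{++}=\|\ce\|$ all have to be used together to keep the count from stalling at $\lambda^+$.
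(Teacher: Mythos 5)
Your preparatory reductions are fine: the universal $\ce$ from Lemma \ref{monster}, passing to a reduced minimal extension without the extension property sitting inside $\ce$ (via Fact \ref{reduce}, Lemma \ref{extendminimal}(1) and the contrapositive of Fact \ref{easyext}), the use of Lemmas \ref{ext-pp} and \ref{bijectionS-p}, and even the model-homogeneity of $\ce$, which does follow from its universality by a back-and-forth of length $\lambda^{++}$ (all approximations have size $\le\lambda^+$, so Lemma \ref{monster} suffices). But the proof has a genuine gap at exactly the point you yourself flag as ``the main obstacle'': you never produce the fresh conjugate $c_\xi$. Your counting gives only an upper bound --- at most $|\xi|\cdot\lambda^+<\lambda^{++}$ elements of $\ce$ realize the conjugates already listed --- so $\lambda^{++}$ elements realize none of them; but nothing guarantees that any of those elements realizes a member of $S_{p'}(M')$ at all. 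The failure of the extension property only ever yields upper bounds (each conjugate has $\le\lambda$ realizations, by Lemma \ref{manyrealizations} via Lemma \ref{ext-pp}); it provides no lower bound on the number of conjugates. Neither minimality nor reducedness repairs this: two distinct types over an enlargement of $M'$ can have the same restriction to $M'$, so reducedness does not let you ``descend'' a new type over an enlargement to a new element of $S_{p'}(M')$.

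There is also a structural mismatch with what is actually provable. You fix one reduced minimal $(a,M',N')$ at the outset and aim to show $|S_{p'}(M')|\ge\lambda^{++}$ for that particular $M'$; this is stronger than the lemma, which only asserts that \emph{some} reduced extension (possibly far along a chain) has many conjugates. The paper's proof is by contradiction against the assumption that \emph{every} reduced extension of $(a,M,N)$ has at most $\lambda^+$ conjugates: under that assumption it builds a strictly increasing $\lambda^+$-chain of reduced pre-types $(a,P_\alpha,Q_\alpha)$ (Facts \ref{wep} and \ref{reduce}) together with a parallel chain $R_\alpha\lea\ce$ that ``seals off'' all realizations in $\ce$ of the $\le\lambda$ conjugate types enumerated before stage $\alpha$ (possible precisely because each has $\le\lambda$ realizations by Lemma \ref{ext-pp}), and then --- this is the engine your argument lacks --- uses $\lambda^+$-categoricity to get $g:\bigcup_\alpha P_\alpha\cong\bigcup_\alpha R_\alpha$, picks $\delta$ in the club where $g:P_\delta\cong R_\delta$, and observes that an extension of $g$ sends $a$ to a realization of an already-enumerated conjugate lying outside the sealed model $R_{\epsilon+1}$, contradicting the sealing property. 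Your proposal never invokes $\lambda^+$-categoricity, and without it (or some substitute) the freshness step has no source of strength: everything you do use is compatible with $S_{p'}(M')$ being small.
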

\begin{proof}
Let $M \lea \ce$ such that $\ce\in \K_{\lambda^{++}}$ universal above $\lambda$, this exists by Lemma \ref{monster}. We do the proof by contradiction, so suppose it is not the case. We build $\{ (a, P_\alpha, Q_\alpha) : \alpha < \lambda^+ \}$ such that:
\begin{enumerate}
\item $(a, P_0, Q_0) := (a, M,N)$.
\item $\{ P_\alpha : \alpha < \lambda^+ \}$ and $\{ Q_\alpha : \alpha < \lambda^+ \}$ are increasing and continuous.
\item $P_\alpha \lta P_{\alpha +1}$.
\item $(a, P_{\alpha}, Q_{\alpha}) \in K_{\lambda}^{3}$ is reduced for each $\alpha < \lambda^+$.  
\end{enumerate} 
The construction of the chain is done by combining Fact \ref{wep} and Fact \ref{reduce}.

We also build $\{ R_\alpha : \alpha < \lambda^+ \}$ and $\{ \Gamma_\alpha : \alpha < \lambda^+ \}$ such that:
\begin{enumerate}
\item $R_0 := M$
\item $\forall \alpha < \lambda^+ (R_\alpha \lea \ce \text{ and } R_\alpha \in \K_\lambda)$.
\item  $\{ R_\alpha : \alpha < \lambda^+ \}$ is increasing and continuous.
\item $\Gamma_\alpha = \{ q_i^{\alpha} : i < \lambda^+ \}= \bigcup_{\gamma < \lambda^+} S_{\gtp(a/R_\gamma, Q_\gamma)}(R_\alpha)$.
\item  $\forall \beta < \lambda^+ \forall q ( q = q_i^{\alpha} \text{ for } \alpha, i < \beta \text{ then there is no } N' \in \K_{\lambda} \text{ such that } \ce \gea N' \gea R_{\beta +1} \text{ and } c \in |N'| \backslash |R_{\beta +1}| \text{ realizing } q )$. 
\end{enumerate} 
\underline{Construction:} If $\alpha = 0$ apply (1) and if $\alpha$ is limit one takes unions. So the only interesting case is when $\alpha = \beta +1$. By hypothesis given $\gamma < \lambda^{+}$ we have that $|S_{\gtp(a/P_\gamma, Q_\gamma)}(P_\gamma)| \leq \lambda^+$, then by $\lambda$-categoricity and Lemma \ref{bijectionS-p} $|S_{\gtp(a/P_\gamma, Q_\gamma)}(R_\beta)| \leq \lambda^+$. So let $\{ q_i^{\beta} : i < \lambda^+ \}$ an enumeration of $\bigcup_{\gamma < \lambda^+} S_{\gtp(a/R_\gamma, Q_\gamma)}(R_\beta)$. Let $\Sigma = \{ q_{i}^{\alpha} : i, \alpha < \beta \}$, clearly $|\Sigma| \leq \lambda$. Observe that by Lemma \ref{ext-pp} if $u \in \Sigma$ and $A_u = \{ c\in \ce : c \text{ realizes } u \}$ then $|A_u|\leq \lambda$. Hence $A= \bigcup_{ u \in \Sigma} A_u $ is of size $\lambda$ and let $R_{\beta + 1}$ the structure obtained by applying LST axiom to $A \cup R_\beta$ in $\ce$. $R_{\beta + 1}$ works.

\underline{Enough:} Let $P= \bigcup_{\alpha < \lambda^+} P_\alpha$ and $R = \bigcup_{\alpha < \lambda^+} R_\alpha$. By $\lambda^+$-categoricty there is $g: P\cong R$. Let $D = \{ \delta < \lambda^+ : g: P_\delta \cong R_\delta \}$, by continuity of the chains this is a club. Let $\delta \in D$ and $q = g(\gtp(a/ P_\delta, Q_\delta)) \in S_{\gtp(a/ P_\delta, Q_\delta)}(R_\delta)$, by the enumeration there is $i < \lambda^+$ such that $q = q_i^{\delta}$.

Let $\epsilon >\delta, i$. Since $\{ P_\alpha : \alpha < \lambda^+ \}$ is increasing and continuous there is $\gamma < \lambda^+$ such that $g^{-1}[R_{\epsilon + 1}] \lta P_{\gamma}$. Moreover since $(a, P_\gamma, Q_\gamma) \geq (a, P_\delta, Q_\delta)$, then $g(a) \in |g[Q_\gamma]| \backslash |R_{\epsilon + 1}|$ and realizes $q$. Hence $g[Q_\gamma]$ and $g(a)$ contradict (5).
\end{proof}

\begin{lemma}\label{v-pair} Under the hypothesis of Theorem \ref{ext}.
Let $p=\gtp(a/M, N)\in \gS^{na}(M)$ such that $(a, M, N)$ is reduced, minimal, does not have the extension property and $|S_{p}(M) | \geq \lambda^{++}$. If $R \in \K_{\lambda}$, $\Gamma\subseteq \bigcup \{ S_p(R') : R' \lea R, R' \in \K_\lambda \}$ and $|\Gamma| \leq \lambda^+$ then \[ \Gamma^* = \{ q \in S_p(R) : \exists R^* \in \K_{\lambda}( R \lea R^* \text{,}  R^* \text{ realizes } q \text{ and there is no \,} c \in |R^*| \backslash |R| \text{ realizing } u \in \Gamma)\} \]  
has size $\lambda^{++}$.
\end{lemma}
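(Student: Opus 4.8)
The plan is to reframe the statement as a bound on the complement. First I would invoke Lemma~\ref{monster} to fix some $\ce \in \K_{\lambda^{++}}$ universal above $\lambda$ with $R \lea \ce$. Every $q \in S_p(R)$ is realized over a model in $\K_{\lambda}$, hence, by universality of $\ce$ and $R \lea \ce$, realized in $\ce$; since at most $\|\ce\| = \lambda^{++}$ types over $R$ are realized in a single model, $|S_p(R)| \le \lambda^{++}$. Conversely, $\lambda$-categoricity gives $M \cong R$, so Lemma~\ref{bijectionS-p} together with the hypothesis $|S_p(M)| \ge \lambda^{++}$ yields $|S_p(R)| = |S_p(M)| \ge \lambda^{++}$. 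Thus $|S_p(R)| = \lambda^{++}$, and it suffices to prove that the set $B := S_p(R) \setminus \Gamma^*$ of \emph{bad} types has size $\le \lambda^+$.

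Next I would manufacture a single ``trap'' model absorbing all of $\Gamma$. Each $u \in \Gamma$ is an isomorphic image of $p$, so, like $p$, it fails the extension property; hence there is $K_u \in \K_\lambda$ with $\dom(u) \lea K_u$ admitting no nonalgebraic extension of $u$. Using the $\{\lambda, \lambda^+\}$-amalgamation property (Remark~\ref{AP2}) I would build an increasing continuous chain that amalgamates a copy of each $K_u$ over $\dom(u)$, producing $S \in \K_{\lambda^+}$ with $R \lea S$ containing a copy of every $K_u$ over $\dom(u)$. A short monotonicity check shows that if $K \gea \dom(u)$ kills $u$ and $K \lea S$, then $S$ also kills $u$: a fresh realization $d \notin S$ of an extension of $u$ would restrict to a nonalgebraic extension of $u$ over $K$. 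Consequently no nonalgebraic type over $S$ extends any $u \in \Gamma$.

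The heart of the argument is to show that every bad type is already realized in $S$, which immediately gives $|B| \le \|S\| = \lambda^+$. Suppose toward a contradiction that some $q \in B$ is not realized in $S$. Because $(a,M,N)$ is reduced (Definition~\ref{d-reduced}) and $M \cong R$, the type $q$ is realized by a \emph{reduced} pre-type $(b, R, P) \in K_{\lambda}^{3}$ with base exactly $R$. Amalgamating $P$ and $S$ over $R$ yields $S^{+} \gea S$ and $g \colon P \rightarrow_{R} S^{+}$; since $q$ is not realized in $S$, the realization $g(b)$ lies outside $S$. As $q$ is bad and $P$ realizes it, there is $c \in |P| \setminus |R|$ realizing some $u \in \Gamma$; its image $g(c)$ again realizes $u$, and because $S$ kills $u$ the element $g(c)$ cannot be new over $S$, i.e. $g(c) \in S$. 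Now reducedness of $(g(b), R, g[P])$ — applied with a base $R_1 \in \K_\lambda$ chosen so that $R \cup \{g(c)\} \subseteq R_1 \lea S$ and a suitable $P_1 \gea R_1, g[P]$ in $\K_\lambda$, legitimate since $g(b) \notin S \supseteq R_1$ — forces $R_1 \cap g[P] = R$; but $g(c) \in R_1 \cap g[P]$, so $g(c) \in R$ and hence $c \in R$, contradicting $c \notin R$.

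The main obstacle, and where the three hypotheses conspire, is exactly this last step: badness only guarantees \emph{some} new realization of a $\Gamma$-type, a priori an old or coincidental element, and one must pin it down inside $R$. The combination that makes it work is that the killing models packed into $S$ prevent any \emph{fresh} realization of a $\Gamma$-type over $S$ (forcing $g(c) \in S$), while reducedness of the realizing pre-type prevents $g[P]$ from meeting $S$ outside $R$ (forcing $g(c) \in R$); these two pincers are incompatible with $c$ being a genuinely new realization. Once this is established, $B$ injects into the at most $\lambda^+$ types over $R$ realized in $S$, so $|\Gamma^*| = \lambda^{++}$, as required.
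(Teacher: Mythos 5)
Your proof is correct, and while it shares the paper's overall skeleton, the key mechanism is genuinely different. Both arguments produce a model of size $\lambda^{+}$ that absorbs every realization of every $u \in \Gamma$, and then use reducedness of the canonical copy of $(a,M,N)$ realizing a given $q \in S_p(R)$ to force that copy to meet the absorbing model only in $R$, so that the copy itself witnesses $q \in \Gamma^{*}$; since a $\lambda^{+}$-sized model realizes at most $\lambda^{+}$ types over $R$ while $|S_p(R)| \geq \lambda^{++}$ (via $\lambda$-categoricity and Lemma \ref{bijectionS-p}), this leaves $\lambda^{++}$ members of $\Gamma^{*}$. The difference is how the absorbing model arises. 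The paper works inside the universal model $\ce \in \K_{\lambda^{++}}$ of Lemma \ref{monster} with a fixed filtration $\{C_\alpha : \alpha < \lambda^{++}\}$ and gets absorption from Lemma \ref{ext-pp} (hence ultimately Lemma \ref{manyrealizations}): each $u \in \Gamma$ has at most $\lambda$ realizations in $\ce$, so all of them fit inside a single $C_\alpha$, and the types entering $\Gamma^{*}$ are those whose canonical realizer $a_q$ falls outside $C_\alpha$. You instead assemble a trap $S \in \K_{\lambda^{+}}$ by amalgamating, over their domains, witnesses $K_u$ to the failure of the extension property of each $u$ (legitimate, since each $u$ is an isomorphic image of $p$); your $S$ then absorbs realizations of $\Gamma$-types in \emph{every} extension of $S$, not merely inside $\ce$, and the types entering $\Gamma^{*}$ are those not realized in $S$. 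Your route buys hypothesis economy: it bypasses Lemmas \ref{ext-pp} and \ref{manyrealizations}, needs only the $\{\lambda,\lambda^{+}\}$-amalgamation property of Remark \ref{AP2} for the core estimate, and invokes $\ce$ only for the upper bound $|S_p(R)| \leq \lambda^{++}$ --- a bound which is in fact needed for the exact equality $|\Gamma^{*}| = \lambda^{++}$ and is left implicit in the paper, whose proof only establishes $|\Gamma^{*}| \geq \lambda^{++}$. The paper's route buys brevity in context: $\ce$, its filtration, and Lemma \ref{ext-pp} are already in place for the surrounding arguments, so the whole proof stays inside one fixed model.
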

\begin{proof}
Let $R \lea \ce$ such that $\ce\in \K_{\lambda^{++}}$ universal above $\lambda$, this exists by Lemma \ref{monster}. Let $\{ C_\alpha : \alpha < \lambda^{++}\}$ an increasing and continuous resolution of $\ce$ such that $C_0 := R$.

Given $q \in S_p(R)$ there is $(a_q, R, T_q) \in K^{3}_{\lambda}$ such that $(a, M, N) \cong (a_{q}, R, T_q)$. Since $(a, M, N)$ is reduced it follows that $ (a_{q}, R, T_q)$ is reduced. Moreover, from the fact that $\ce$ is universal above $\lambda$ we may assume that $T_q \lea \ce$.

\fbox{Claim} 
\begin{enumerate}
\item If $q_1 \neq q_2 \in S_p(R)$ then $a_{q_1} \neq a_{q_2}$.
\item If $a_q \notin C_\alpha$ then $T_q \cap C_\alpha = R$.
\end{enumerate}
The proof of the first claim follows from the fact that $T_{q_1}, T_{q_2} \lea \ce$. As for the second claim, it is clear that $R \subseteq  T_{q} \cap C_\alpha$, so we will show the other inclusion. Let $b \in T_{q} \cap C_\alpha$, let $R'$ the structure obtained by applying  LST axiom to $\{b \} \cup R$  in $C_\alpha$ and let $T'$ the structure obtained by applying  LST axiom to $\{a_q \} \cup R'$  in $\ce$. Clearly $(a_q, R, T_{q}) \leq (a_q, R', T') \in K^{3}_\lambda$ and since $(a_q, R, T_{q})$ is reduced $T_{q} \cap R' = R$. Since $b \in T_{q} \cap R'$, it follows that $b \in R$. $\dagger_{\text{Claim}}$

For each $u \in \Gamma$, by definition $u \in S_p(R')$ for some $R' \lea R$. Hence by Lemma \ref{ext-pp} if $A_u = \{ c \in \ce : c \text{ realizes } u \}$,  it follows that $| A_u | \leq \lambda$. Since $| \Gamma | = \lambda^+$, $|A|= |\bigcup_{u \in \Gamma} A_u | \leq \lambda^+$.

Pick $\alpha < \lambda^{++}$ such that $A \subseteq C_\alpha$. Let

\[ \Sigma = \{ q : q=\gtp(a_q/R, T_{q}) \text{ and } a_q \notin C_\alpha \} ,\]
we will show that $\Sigma \subseteq \Gamma^*$ and $| \Sigma | \geq \lambda^{++}$.

Let $q \in \Sigma$, so $q = \gtp(a_q/ R, T_{q})$ for $a_q \notin C_\alpha$. Suppose there is $c \in |T_{q}| \backslash |R|$ and $u \in \Gamma$ such that $c$ realizes $u$. Since $T_{q} \lea \ce$, by definition $c\in A_u \subseteq C_{\alpha}$. Hence by claim (2) $c \in T_{q} \cap C_\alpha = R$, contradicting the fact that $ c \notin R$. 

Finally, since $| S_p(M)| \geq \lambda^{++}$, by $\lambda$-categoricity and Lemma \ref{bijectionS-p} we have that $|S_{p}(R)| \geq \lambda^{++}$. From the fact that $\| C_{\alpha} \| = \lambda^+$ and Claim (1), it follows that $| \Sigma| \geq \lambda^{++}$. 
\end{proof}

\begin{proof}[Proof of Theorem \ref{ext}]
Let $\ce\in \K_{\lambda^{++}}$ universal above $\lambda$, this exists by Lemma \ref{monster}.

We do the proof by contradiction, so assume that $p=\gtp(a/M, N)$ does not have the extension property.

By $\lambda$-categoricity there is $h: N \rightarrow \ce$ so we may assume that $N \lea \ce$.
Moreover by Fact \ref{reduce}, Lemma \ref{int1}, Lemma \ref{ext-pp} and Lemma \ref{easyext} we may assume that $(a, M, N)$ is reduced, minimal and $|S_{\gtp(a/M, N)} (M)|\geq \lambda^{++}$.

We build $\{ M_\eta : \eta \in 2^{<\lambda^+} \}$ and $\{ p_{\eta}^{l} : \eta \in 2^{<\lambda^+}, l \in \{ 0, 1\}\}$ such that:
\begin{enumerate}
\item $M_{<>}:= M$.
\item $\forall \eta \in 2^{<\lambda^+}( M_\eta \in \K_{\lambda} \text{ and } M_\eta \lea \ce)$.
\item If $\eta < \nu$ then $M_\eta \lta M_\nu$.
\item $\forall \eta \in 2^{<\lambda^+} \forall l \in \{0,1\}( p_\eta^l \in S_p(M_\eta))$.
\item $M_\eta$ realizes $p_{\eta\upharpoonright_{\beta}}^{l}$ if and only if $\beta < lg(\eta)$ and $\eta(\beta)=l$. 
\end{enumerate}
 
Before doing the construction let us show that this is enough. 

\underline{Enough:} Given $\eta \in 2^{\lambda^+}$ let $M_{\eta} = \bigcup_{\alpha < \lambda^+} M_{\eta\upharpoonright_{\alpha}}$. Realize that the construction above satisfies the hypothesis of Lemma \ref{diamond}. In particular, if $\eta \in 2^{\lambda^+}$ and $\alpha < \lambda^+ M_{\eta\upharpoonright_{\alpha}^{\wedge}0}$ can not be embedded to $M_{\nu}$ over $M_{\eta\upharpoonright_{\alpha}}$ if $\eta\upharpoonright_{\alpha}^{\wedge}1 < \nu$ and $\nu \in 2^{<\lambda^+}$ by condition (5). Hence by the conclusion of Lemma \ref{diamond} $\K$ is not $\lambda^+$-categorical, contradicting the hypothesis of the theorem.

\underline{Construction:} For the base step use (1) and in limit stages take unions. So the only interesting case is when $\alpha = \beta +1$. In that case, we are given by induction hypothesis $M_\eta$ and need to build $M_{\eta^{\wedge} 0}$, $M_{\eta^{\wedge} 1}$  and $p_\eta^{0}, p_\eta^1$. We build $\{ (N^{\eta}_{\delta}, a^{\eta}_\delta) : \delta < \lambda^{++} \}$ such that:
\begin{enumerate}
\item $( a^{\eta}_\delta, M_\eta, N^{\eta}_\delta) \in K_\lambda^3$ and $N^{\eta}_\delta \lea \ce$.
\item $\gtp( a^{\eta}_\delta / M_\eta,  N^{\eta}_\delta) \in S_p(M_\eta)$.
\item $N^\eta_\delta$ omits every $q \in \Gamma_\delta$, where $\Gamma_\delta = \{ p^l_{\eta \upharpoonright_{\beta}} : \beta < lg(\eta), l \neq \eta(\beta) \} \cup \{ \gtp(b/M_\eta, N^{\eta}_\gamma) : \gamma < \delta, b \in N^\eta_{\gamma} \text{ and } \gtp(b/ M_\eta, N^\eta_{\gamma}) \in S_p(M_\eta) \}$. 
\end{enumerate}

As before we will first show that this is enough and then we will do the construction.

\underline{Enough:} For every $\delta <\lambda^{++}$, let  $W^{\eta}_\delta = \{ \gamma < \lambda^{++} : \exists b \in N^{\eta}_{\delta}( \gtp(b/ M_\eta, N^{\eta}_\delta)= \gtp(a^\eta_\gamma/ M_\eta, N^{\eta}_\gamma) ) \} $. Observe that for every $\delta$ we have that $| W^{\eta}_\delta| \leq \lambda$ since $N^{\eta}_\delta$ realizes at most $\lambda$ types and by (3) if $\gamma \neq \gamma'$ then $\gtp(a^\eta_\gamma/ M_\eta, N^\eta_\gamma)\neq \gtp(a^\eta_{\gamma '}/ M_\eta, N^{\eta}_{\gamma '})$.

Therefore, there are $\delta < \epsilon < \lambda^{++}$ such that $\delta \notin W^{\eta}_\epsilon$ and $\epsilon \notin W^{\eta}_\delta$. Let $M_{\eta^{\wedge} 0} = N^{\eta}_\delta$, $M_{\eta^{\wedge} 1} = N^{\eta}_\epsilon$, $p_\eta^{0} = \gtp(a^{\eta}_\delta/ M_\eta,  N^{\eta}_\delta)$ and $p_\eta^{1} = \gtp(a^{\eta}_\epsilon/ M_\eta, N^{\eta}_\epsilon)$.   

By (3) they omit all the restrictions and by $\delta \notin W^\eta_\epsilon$ and $\epsilon \notin W^\eta_\delta$ it follows that $p_\eta^{0}$ is omitted in $M_{\eta^{\wedge} 1}$ and  $p_\eta^{1}$ is omitted in $M_{\eta^{\wedge} 0}$.

\underline{Construction:} If $\delta = 0$, then \[|\Gamma_0|=|  \{ p^l_{\eta \upharpoonright_{\beta}} : \beta < lg(\eta), l \neq \eta(\beta) \} | \leq \lambda^{+}. \]
Observe that $p$, $R= M_\eta$  and $\Gamma= \Gamma_0$ satisfy the hypothesis of Lemma \ref{v-pair}. Therefore, there is $(a^{\eta}_0, N^\eta_0)$ such that $M_\eta \lea N^{\eta}_0$, $\gtp(a^{\eta}_0/ M_\eta, N^{\eta}_0) \in S_p(M_\eta)$ and  no $c\in |N^{\eta}_0| \backslash |M_\eta|$ realizes a type in $\Gamma_0$. Moreover, since $M_\eta$ omits $\Gamma_0$ it follows that $N^{\eta}_0$  omits $\Gamma_0$ and since $\ce$ is universal above $\lambda$ we may assume that $N^{\eta}_0 \lea \ce$.

If $\delta$ is limit or successor, realize that $|\Gamma_\delta| \leq \lambda^{+}$, then apply Lemma \ref{v-pair} as we did in the base step.

This finishes the construction and since we got to a contradiction in the first enough statement, we conclude that $(a, M, N)$ has the extension property. 
\end{proof}

We are finally able to obtain that $\s_{min}$ satisfies the existence of nonforking extension property.

\begin{lemma}\label{ext-pr}
Suppose $2^{\lambda}< 2^{\lambda^{+}} < 2^{\lambda^{++}}$ and $2^{\lambda^{+}} > \lambda^{++}$. If $(\ast)_\lambda$ and $\K_{\lambda^{+++}} = \emptyset$ then $\s_{min}$ satisfies existence of nonforking extension property. 
\end{lemma}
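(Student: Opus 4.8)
The plan is to observe that, once Theorem \ref{ext} is in hand, this lemma is essentially a bookkeeping exercise assembling the extension property with the fact that minimality is preserved under extensions. Recall that for the $\lambda$-frame $\s_{min}$ the \emph{existence of nonforking extension} property reads: given $p \in \gS^{bs}_{min}(M)$ and $M \lea N$ both in $\K_{\lambda}$, there should be $q \in \gS^{bs}_{min}(N)$ extending $p$ that does not fork over $M$. I would first unpack the two unknowns in this statement in terms of the definition of $\s_{min}$ (Definition \ref{min-frame}): membership $q \in \gS^{bs}_{min}(N)$ means $q$ is a minimal type, and ``$q$ does not fork over $M$'' means precisely that $q\upharpoonright_{M}$ is minimal.

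The key simplification to point out is that the nonforking clause is free. Since any $q$ extending $p$ satisfies $q\upharpoonright_{M}=p$, and $p$ is minimal by the hypothesis $p \in \gS^{bs}_{min}(M)$, the restriction $q\upharpoonright_{M}$ is automatically minimal; hence whatever extension $q$ we produce will not fork over $M$. So the entire task reduces to producing a \emph{basic} (i.e.\ minimal) extension $q \in \gS^{na}(N)$ of $p$.

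For the existence of such $q$, I would invoke Theorem \ref{ext} together with the Remark following the definition of the extension property: since $p \in \gS^{bs}_{min}(M)$, there is some $(a,M,N_{0}) \in K_{\lambda}^{3}$ minimal with $p=\gtp(a/M,N_{0})$, and by Theorem \ref{ext} this minimal pre-type has the extension property, so $p$ itself has the extension property. Applying this to $M \lea N$ yields $q \in \gS^{na}(N)$ extending $p$. Finally, to see $q$ is basic, I would apply Lemma \ref{extendminimal}(4): since $q \in \gS(N)$ extends the minimal type $p \in \gS(M)$ with $M \lea N$ in $\K_{\lambda}$, $q$ is minimal, and therefore $q \in \gS^{bs}_{min}(N)$ by the definition of $\gS^{bs}_{min}$.

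The ``main obstacle'' has already been overcome before we reach this lemma: all the genuine difficulty---the weak-diamond construction and the counting argument---lives in Theorem \ref{ext} and its supporting Lemmas \ref{manyrealizations}, \ref{int1}, and \ref{v-pair}. Consequently the only thing I would be careful about here is the correct reading of ``nonforking'' and ``basic'' for $\s_{min}$, so that the chain Theorem \ref{ext} $\Rightarrow$ extension property $\Rightarrow$ Lemma \ref{extendminimal}(4) $\Rightarrow$ minimality of the extension is assembled without circularity. No further cardinal arithmetic or set-theoretic input is needed at this stage beyond what Theorem \ref{ext} already consumed.
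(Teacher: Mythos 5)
Your proposal is correct and follows essentially the same route as the paper's proof: both reduce the lemma to Theorem \ref{ext} applied to a minimal pre-type witnessing $p$, obtain the extension $q$, and read off nonforking from the fact that $q\upharpoonright_{M}=p$ is minimal (the paper verifies $p \leq q$ directly via the $E_{at}$ witness $g$, where you invoke the Remark instead). If anything, you are slightly more careful than the paper in explicitly citing Lemma \ref{extendminimal}(4) to confirm that $q$ is basic, a step the paper leaves implicit.
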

\begin{proof}
 Let $p=\gtp(a/ M, N) \in \gS^{bs}_{min}(M)$ and $M \lea M'$. Since $p \in \gS^{bs}_{min}(M)$, there is $(a, M, N)$ minimal pre-type such that $p=\gtp(a/M, N)$. Then by Theorem \ref{ext} there are $g$ and $N'\in \K_\lambda$ such that $(a, M, N) \leq_{g} (b, M', N') \in K_{\lambda}^3$ and $g \supseteq \id$. Let $q=\gtp(b/ M', N')$, it easy to show that $g$ is the witness for $(a, M, N) E_{at} (b, M, N')$, so $p \leq q$. Since $q\upharpoonright_M=p$ is a minimal type, we conclude that $q$ does not fork over $M$. 
\end{proof}

\subsection{Conclusion}
Putting together everything we have done in this section we get:

\begin{theorem}\label{ext-frame++}
Suppose $2^{\lambda}< 2^{\lambda^{+}} < 2^{\lambda^{++}}$ and $2^{\lambda^{+}} > \lambda^{++}$. If $(\ast)_\lambda$ and $\K_{\lambda^{+++}} = \emptyset$ then there is a w-good $\lambda$-frame.
\end{theorem}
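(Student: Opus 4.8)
The plan is to verify that the candidate frame $\s_{min}=(\K_{min}, \dnf_{min}, \gS^{bs}_{min})$ of Definition \ref{min-frame} satisfies each of the six defining clauses of a w-good $\lambda$-frame. Since all of the substantive work has already been carried out in the preceding lemmas of this section, the proof reduces to checking that the standing hypotheses of the theorem supply exactly the inputs that each lemma requires, and then citing those lemmas in turn.

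First I would unwind $(\ast)_\lambda$: it gives $\Ii(\K,\lambda)=1$, so $\K$ is $\lambda$-categorical, and $\Ii(\K,\lambda^+)=1$, so $\K$ is $\lambda^+$-categorical and in particular $1 \leq \Ii(\K,\lambda^+) < 2^{\lambda^+}$. Together with the assumption $2^{\lambda} < 2^{\lambda^+}$, this is precisely the hypothesis of Lemma \ref{pre-frame} and Lemma \ref{pre-frame2}. Invoking them gives at once that $\s_{min}$ is a pre-$\lambda$-frame (clause (1)), that $\K_\lambda$ has amalgamation, joint embedding and no maximal models (clause (2)), that uniqueness holds (clause (6)) and that continuity holds (clause (9)). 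It then remains only to supply weak density and existence of nonforking extension.

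These two remaining clauses are exactly the content of Lemma \ref{weak-d} (weak density, clause $(4)^-$) and Lemma \ref{ext-pr} (existence of nonforking extension, clause (5)); each is proved under the full standing hypotheses $2^{\lambda} < 2^{\lambda^+} < 2^{\lambda^{++}}$, $2^{\lambda^+} > \lambda^{++}$, $(\ast)_\lambda$ and $\K_{\lambda^{+++}} = \emptyset$, all of which are in force here. Assembling the six clauses establishes that $\s_{min}$ is a w-good $\lambda$-frame. I do not expect a genuine mathematical obstacle at this final step, since the theorem is an assembly of the section's results; the one point I would flag for care is the bookkeeping of \emph{where} each hypothesis is consumed. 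The pre-frame and the structural clauses (2), (6), (9) need only $\lambda$- and $\lambda^+$-categoricity together with $2^{\lambda}<2^{\lambda^+}$, whereas the two genuinely hard clauses are where the extra arithmetic $2^{\lambda^+}>\lambda^{++}$ and the assumption $\K_{\lambda^{+++}}=\emptyset$ (the latter entering through the universal model produced by Lemma \ref{monster}) are actually used.
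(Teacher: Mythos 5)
Your proposal is correct and matches the paper's own proof essentially verbatim: the paper likewise assembles Theorem \ref{ext-frame++} by citing Lemma \ref{pre-frame} (pre-frame), Lemma \ref{pre-frame2} (amalgamation, joint embedding, no maximal models, uniqueness, continuity), Lemma \ref{weak-d} (weak density) and Lemma \ref{ext-pr} (existence of nonforking extension). Your additional bookkeeping of which hypotheses feed which lemma is accurate and consistent with how the paper uses them.
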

\begin{proof}
By Lemma \ref{pre-frame} $\s_{min}$ is a pre-frame. Then by Lemma \ref{pre-frame2} $\s_{min}$ satisfies everything except weak density and existence of nonforking extension. Finally, by Lemma \ref{weak-d} $\s_{min}$ satisfies weak density and by Lemma \ref{ext-pr} $\s_{min}$ satisfies existence of nonforking extension. 
\end{proof}

Now, using the ideas from Section 3 together with the above theorem we are able to prove Theorem \ref{main}. We repeat the statement of the theorem for the convenience of the reader.

\begin{customthm}{4.2}
Suppose $2^{\lambda}< 2^{\lambda^{+}} < 2^{\lambda^{++}}$ and $2^{\lambda^{+}} > \lambda^{++}$. If $\Ii(\K, \lambda) = \Ii(\K, \lambda^{+}) = 1 \leq \Ii(\K, \lambda^{++}) < 2^{\lambda^{++}}$and $\K$ is $(\lambda, \lambda^+)$-tame then $\K_{\lambda^{+++}} \neq \emptyset$.
\end{customthm}
\begin{proof}
Suppose for the sake of contradiction that $\K_{\lambda^{+++}}= \emptyset$. Then by Lemma \ref{ext-frame++} $\s_{min}$ is a w-good $\lambda$-frame. Since $\K$ is $(\lambda, \lambda^{+})$-tame and $\K_{\{ \lambda, \lambda^+\} }$ has the amalgamation property by Remark \ref{AP2}, it follows from Theorem \ref{ext-ext} that $\s_{min, \{ \lambda, \lambda^+ \}}$ is a w-good $[ \lambda, \lambda^{++} )$-frame. Hence by Theorem \ref{model} $\K_{ \lambda^{+++} }\neq \emptyset$, which contradicts the hypothesis. 
\end{proof}

Lastly, let us show how we can apply Theorem \ref{main} to universal classes. In \cite{sh300} Shelah introduced the concept of universal class in the non-elementary setting.

\begin{defin}\label{universal}
A class of structures $K$ is a \emph{universal class} if:
\begin{enumerate}
\item $K$ is a class of $\tau$-structures, for some fixed vocabulary $\tau = \tau(K)$.
\item $K$ is closed under isomorphisms.
\item $K$ is closed under $\subseteq$-increasing chains.
\item If $M \in K$ and $N \subseteq M$, then $N \in K$.  
\end{enumerate}
Observe that if $K$ is a universal class then $\K= (K, \subseteq)$  is an AEC with $\LS (\K) = |\tau (K)| + \aleph_0$. We identify $K$ and $\K$.
\end{defin}

When $\K$ is a universal class, without any additional hypothesis, Will Boney proved that $\K$ is $(< \aleph_{0})$-tame. It appears in \cite[3.7]{vaseyd}.

\begin{fact}\label{tame-univ}
If $\K$ is a universal class, then $\K$ is $(< \aleph_{0})$-tame. In particular, $\K$ is $\lambda$-tame for every $\lambda \geq LS(\K)$.
\end{fact}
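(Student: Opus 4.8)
The plan is to reduce Galois types in a universal class to quantifier-free (atomic) syntactic types, and then to observe that two distinct syntactic types are already separated by a single formula with finitely many parameters. The only genuinely structural input is a characterization of equality of Galois types in terms of generated substructures; everything after that is soft.

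First I would establish the characterization: for $M \in \K$ and singletons $a \in N$, $b \in N'$ with $M \lea N, N'$,
\[ \gtp(a/M, N) = \gtp(b/M, N') \iff \id_M \cup \{a \mapsto b\} \text{ extends to a } \tau\text{-isomorphism } \langle M \cup \{a\}\rangle_N \cong \langle M \cup \{b\}\rangle_{N'}, \]
where $\langle X \rangle_P$ denotes the $\tau$-substructure of $P$ generated by $X$, which lies in $\K$ because $\K$ is closed under substructures. For the forward direction I would show that a single $E_{at}$-step preserves the isomorphism type over $M$ of the generated substructure (with $a$ tracked to its image): if $(a, M, N) \, E_{at} \, (b, M, N')$ is witnessed by $f_l : N_l \to_M P$ with $f_0(a) = f_1(b)$, then $\langle Ma\rangle_N \cong_M \langle M f_0(a)\rangle_P = \langle M f_1(b)\rangle_P \cong_M \langle Mb\rangle_{N'}$; since ``isomorphic over $M$, $a\mapsto b$'' is transitive, it is preserved by the transitive closure $E$. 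The reverse direction does not require amalgamation: since $(a, M, \langle Ma\rangle_N) \, E_{at} \, (a, M, N)$ (witnessed by the inclusion $\langle Ma\rangle_N \hookrightarrow N$ and $\id_N$), one gets $\gtp(a/M, N) = \gtp(a/M, \langle Ma\rangle_N)$, and likewise for $b$; an isomorphism $\langle Ma\rangle_N \cong_M \langle Mb\rangle_{N'}$ fixing $M$ then gives $\gtp(a/M, \langle Ma\rangle_N) = \gtp(b/M, \langle Mb\rangle_{N'})$ by invariance of Galois types under isomorphism, and the four equalities chain together.

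Next I would invoke the standard model-theoretic fact that the partial map $\id_M \cup \{a \mapsto b\}$ extends to an isomorphism of the generated substructures exactly when $a$ and $b$ realize the same quantifier-free type over $M$, i.e.\ when $N \models \varphi(a, \bar m) \iff N' \models \varphi(b, \bar m)$ for every quantifier-free $\varphi$ and every finite $\bar m$ from $M$. Combined with the characterization above, $\gtp(a/M, N) = \gtp(b/M, N')$ holds iff $a$ and $b$ have the same quantifier-free type over $M$. For tameness, given $p = \gtp(a/M, N) \neq q = \gtp(b/M, N')$, the elements have distinct quantifier-free types over $M$, so some quantifier-free $\varphi(x, \bar m)$ with $\bar m$ a finite tuple from $M$ holds of one and fails of the other. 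Taking $N_0 = \langle \bar m \rangle_M \lea M$, the same formula witnesses $p \upharpoonright_{N_0} \neq q \upharpoonright_{N_0}$, and $\|N_0\| \le |\tau| + \aleph_0 = \LS(\K)$, with $N_0$ finite when $\tau$ is finite relational; this yields $(<\aleph_0)$-tameness in that case and $\lambda$-tameness for every $\lambda \ge \LS(\K)$ in general.

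I expect the first step, the Galois-type characterization, to be the main obstacle. The delicate point is that universal classes need not have amalgamation (the class of semigroups is a standard example), so one cannot amalgamate $N$ and $N'$ into a common model to compare $a$ and $b$ directly. The way around this, as sketched above, is to pass first to the generated substructures, which carry the same Galois type as their ambient models, and then to exploit invariance under the single isomorphism that the common quantifier-free type provides; this keeps the whole argument internal to generated substructures and never appeals to a joint amalgam.
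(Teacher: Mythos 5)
The paper does not actually prove this statement: it is quoted as a Fact, attributed to Boney, with the proof cited from \cite[3.7]{vaseyd}. Judged on its own merits, your argument is correct, and it is essentially the known proof from that source: in a universal class, equality of Galois types reduces to an isomorphism of generated substructures fixing $M$ and matching the realizations, hence to equality of quantifier-free types over $M$; both directions of your characterization are sound (in particular, the observation that $(a, M, \langle Ma\rangle_N)\, E_{at}\, (a, M, N)$ needs no amalgamation, and that $E$, being the transitive closure of $E_{at}$, inherits the isomorphism invariant step by step). One point deserves emphasis, and you flag it correctly yourself: under this paper's definition, tameness restricts types to \emph{models} $N \lea M$, so literal $(<\aleph_0)$-tameness would require a finite submodel, which your construction produces only when finitely generated substructures are finite (relational vocabularies suffice; finiteness of $\tau$ is not needed). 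The clean statement of Boney's result is about restrictions to finite \emph{sets} of parameters, which is exactly what your separating quantifier-free formula $\varphi(x,\bar m)$ gives, and that is the formulation in the cited source. Since the paper only ever uses the ``in particular'' clause --- $(\lambda,\lambda^+)$-tameness for $\lambda \geq \LS(\K)$, via the submodel $\langle \bar m\rangle_M$ of size at most $\LS(\K)$ --- your proof fully covers what the paper needs.
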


Putting together this fact with Theorem \ref{main} we get the following.

\begin{theorem}\label{univ}\footnote{Similarly to Theorem \ref{main}, this is not the best known result for universal classes, stronger results are obtained in \cite{mv}.}
Suppose $2^{\lambda}< 2^{\lambda^{+}} < 2^{\lambda^{++}}$ and $2^{\lambda^{+}} > \lambda^{++}$. Assume $\K$ is a universal class. If $( \ast )_{\lambda}$ then $\K_{\lambda^{+++}} \neq \emptyset$.
\end{theorem}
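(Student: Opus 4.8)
The plan is to deduce this statement as an essentially immediate corollary of Theorem \ref{main} (Theorem 4.2). That theorem assumes exactly the same cardinal arithmetic ($2^{\lambda}< 2^{\lambda^{+}} < 2^{\lambda^{++}}$ and $2^{\lambda^{+}} > \lambda^{++}$) together with $(\ast)_\lambda$, and it has only one extra hypothesis: that $\K$ is $(\lambda, \lambda^+)$-tame. Since all of those arithmetic and $(\ast)_\lambda$ assumptions are already granted here, the entire task reduces to verifying that a universal class automatically enjoys $(\lambda, \lambda^+)$-tameness, after which Theorem \ref{main} delivers the conclusion.

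First I would record that $(\ast)_\lambda$ contains $\Ii(\K,\lambda)=1$, which forces $\K_\lambda \neq \emptyset$ and hence $\lambda \geq \LS(\K)$. This is precisely the cardinal bound needed to invoke Boney's tameness theorem for universal classes. Applying Fact \ref{tame-univ}, the universality of $\K$ gives that $\K$ is $(<\aleph_0)$-tame, and in particular $\lambda$-tame for every $\lambda \geq \LS(\K)$; so in our situation $\K$ is $\lambda$-tame.

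Next I would unwind the definitions to see that global $\lambda$-tameness specializes to the local version required by Theorem \ref{main}. By definition $\lambda$-tame means $(<\lambda^+)$-tame, i.e. for every $M \in \K$ and distinct $p, q \in \gS(M)$ there is $N \lea M$ with $\|N\| \leq \lambda$ such that $p\upharpoonright_{N} \neq q\upharpoonright_{N}$. Restricting the quantifier over $M$ to those models lying in $\K_{\lambda^+}$ is exactly the assertion that $\K$ is $(\lambda, \lambda^+)$-tame, so this restriction is free.

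With $(\lambda, \lambda^+)$-tameness established, every hypothesis of Theorem \ref{main} is satisfied, and that theorem yields $\K_{\lambda^{+++}} \neq \emptyset$. I do not expect any genuine obstacle in this argument: the mathematical content sits entirely in Theorem \ref{main} and in Fact \ref{tame-univ}, and the only point requiring care is the bookkeeping of the tameness indices, namely confirming that the global $\lambda$-tameness produced by Fact \ref{tame-univ} does specialize to the local $(\lambda,\lambda^+)$-tameness that Theorem \ref{main} consumes.
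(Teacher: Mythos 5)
Your proposal is correct and matches the paper's own proof exactly: the paper likewise derives $(\lambda,\lambda^+)$-tameness from Fact \ref{tame-univ} and then invokes Theorem \ref{main}. The extra bookkeeping you supply (checking $\lambda \geq \LS(\K)$ and that global $\lambda$-tameness specializes to $(\lambda,\lambda^+)$-tameness) is sound and only makes explicit what the paper leaves implicit.
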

\begin{proof}
By Fact \ref{tame-univ} $\K$ is $(\lambda, \lambda^+)$-tame and by Theorem \ref{main} $\K_{\lambda^{+++}}\neq \emptyset$.
\end{proof}

Observe that the proof of Theorem \ref{univ} is around 30 pages long (we cited a couple of facts in this paper), while Shelah's original proof is around 250 pages long, making the above proof for universal classes 200 pages shorter.  We use the additional hypothesis that $2^{\lambda^+} > \lambda^{++}$, but as mention in Section 4.2 this is a weak hypothesis.


\end{document}